\newtheorem{thm}{Theorem}[section]
\newtheorem{lemma}[thm]{Lemma}
\newtheorem{prop}[thm]{Proposition}
\newtheorem{cor}[thm]{Corollary}
\newtheorem{rem}[thm]{Remark}
\numberwithin{equation}{section}
\numberwithin{figure}{section}
\theoremstyle{plain}
\theoremstyle{plain}
\theoremstyle{plain}
\numberwithin{equation}{section}
\numberwithin{figure}{section}
\theoremstyle{plain}
\theoremstyle{plain}
\theoremstyle{plain}
\newcommand{\les}{\lesssim}
\newcommand{\ve}{{\varepsilon}}
\newcommand{\R}{{\mathbb R}}
\newcommand{\T}{{\mathbb T}}
\def\cF{\mathcal F}
\def\cH{\mathcal H}
\def\cI{\mathcal I}
\def\cR{\mathcal R}
\newcommand{\p}{\partial}
\def\bra#1{\left\langle #1\right\rangle}
\def\wt#1{\widetilde{#1}}
\def\wh#1{\widehat{#1}}
\def\ol#1{\overline{#1}}
\newcommand{\halbe}{\mathcal{H}_{\alpha,\beta}}
\def\sgn#1{{\rm sgn}(#1)}
\begin{document}
\title[Local and global well-posedness for the kinetic derivative NLS on $\mathbb{R}$]{Local and global well-posedness for the kinetic derivative NLS on $\mathbb{R}$}

\dedicatory{{\it Dedicated to Professor Yoshio Tsutsumi on the occasion of his seventieth birthday}}

\author{Nobu Kishimoto}
\address{Research Institute for Mathematical Sciences(RIMS), Kyoto University, Kitashirakawa Oiwake-cho, Sakyo-ku, Kyoto, 606-8502, Japan}
\email{nobu@kurims.kyoto-u.ac.jp}

\author{Kiyeon Lee}
\address{Department of Mathematical Sciences, Korea Advanced Institute of Science and Technology, 291 Daehak-ro, Yuseong-gu, Daejeon, 34141, Republic of Korea}
\email{kiyeonlee@kaist.ac.kr}

\thanks{2020 {\it Mathematics Subject Classification.} 35Q55, 35A01, 35B30, 35B45.}
\thanks{{\it Keywords and phrases.} Kinetic derivative nonlinear Schr\"odinger equation, Well-posedness, Energy method, Gauge transformation, A priori estimate.}

\begin{abstract}
We investigate the local and global well-posedness of the kinetic derivative nonlinear Schr\"odinger equation(KDNLS) on $\R$, described by
\[ i\partial_t u + \partial_x^2 u = i\alpha \partial_x (|u|^2 u) + i\beta \partial_x (\cH(|u|^2) u), \]
where $\alpha, \beta \in \mathbb{R}$, and $\cH $ represents the Hilbert transformation. 
For KDNLS, the $L^2$ norm of a solution is decreasing (resp.\ increasing, conserved) when $\beta$ is negative (resp.\ positive, zero).
Focusing on the Sobolev spaces $H^2$ and $H^2 \cap H^{1,1}$, we establish local well-posedness via the energy method combined with gauge transformations to address resonant interactions in both cases of negative and positive $\beta$. 
For the dissipative case $\beta < 0$, we further demonstrate global well-posedness by deriving an a priori bound in $H^2$.
\end{abstract}

\maketitle

\tableofcontents


\section{Introduction}

We consider the following one-dimensional nonlinear Schr\"odinger equation with local and non-local cubic derivative nonlinear terms:
\begin{equation}
	\left\{\begin{aligned}
		i\partial_tu + \partial_x^2u &= i\alpha \partial_x \big[ |u|^2u\big] +i\beta \partial_x\big[\mathcal{H}(|u|^2)u\big] , \qquad t>0, ~ x \in \mathbb{R},\\
		u(0,x) &= \phi(x),
	\end{aligned}\right.\label{kdnls}
\end{equation}
where $\alpha ,\beta \in \mathbb{R}$ are constants and $\cH$ is the Hilbert transformation defined by
\[ \mathcal{H}f(x) :=\frac{1}{\pi} p.v.\int_{-\infty}^\infty\frac{f(y)}{x-y}\,dy = \mathcal{F}^{-1}\left[ -i\, \mathrm{sgn}(\xi)\mathcal{F}f\right] (x). \]
We may focus on the positive time direction by considering time reflection $u(t,x)\mapsto \ol{u(-t,-x)}$, which leaves the equation invariant except that $\beta$ needs to be replaced with $-\beta$.
 
The equation \eqref{kdnls} with $\beta \neq 0$ was derived in \cite{MW86,MW88} as a model for the propagation of weakly nonlinear and weakly dispersive Alfv\'en waves in a collision-less plasma.
The non-local nonlinear term $\p_x[\cH(|u|^2)u]$ representing the effect of resonant particles is obtained using a kinetic model, and thus we refer to \eqref{kdnls} with $\beta \neq 0$ as the \emph{kinetic derivative nonlinear Schr\"odinger equation} (KDNLS).
Formally the $L^2$ norm of a solution $u$ to \eqref{kdnls} satisfies  the following identity:
\[ \frac{d}{dt}\| u(t)\|_{L^2}^2\ =\ \beta \big\| D_x^{\frac12}(|u(t)|^2)\big\|_{L^2}^2, \]
where $D_x:=|\partial_x|=\mathcal{H}\partial_x$.
In particular, the equation becomes dissipative when $\beta <0$. 
In the physical model, the constant $\beta$ is determined depending on the velocity distributions of particle species present, and both cases of negative and positive $\beta$ appear to be physically relevant.
It is known that $\beta$ becomes negative if all the velocity distributions decrease as functions of the parallel velocity component to the magnetic field at the resonant velocity (see \cite{MW86,MW88} and discussion in \cite{PdMP11}).

We recall previous results on the well-posedness of the Cauchy problem \eqref{kdnls} for KDNLS in Sobolev spaces.
Note that KDNLS is invariant under the scaling transformation
\[ u(t,x)~\mapsto ~\lambda^{\frac12}u(\lambda^2t,\lambda x)\qquad (\lambda >0),\]
which gives the scale-critical space $L^2$.
The case $\beta =0$ (i.e., the standard derivative NLS) has been very well studied; for a survey of the previous results in this case, we refer to the recent work by Harrop-Griffiths, \mbox{Killip}, \mbox{Ntekoume}, and Vi\c{s}an \cite{HGKNV24}, who obtained the global well-posedness in $L^2$ using complete integrability.
Compared to the case $\beta=0$, many techniques are not equally applicable and much less results are known for KDNLS.
Even the construction of (local) smooth solutions by the classical energy method is not straightforward and seems to require dissipative sign $\beta <0$ (see \cite[Appendix]{KT-gauge} for a proof in $H^s$, $s>3/2$).
Rial~\cite{R02} constructed global $L^2$-weak solutions to \eqref{kdnls} with $\beta <0$ and showed smoothing effect by using dissipative structure.
Peres~de~Moura and Pastor~\cite{PdMP11} made an important observation on the applicability of Kato smoothing and maximal function estimates to the non-local nonlinearity.%
\footnote{The authors in \cite{PdMP11} actually claimed small-data local well-posedness in $H^s(\R)$ for $s>1/2$ with \emph{both signs of $\beta$}.
However, as pointed out in \cite{KT23-1}, their proof seems to have several gaps, especially in the estimates of an anti-derivative term in \cite[Eq.~(3.25)]{PdMP11}.
We do not know how to fix it for now.}
Recently, the first author and Tsutsumi~\cite{KT-stFr} proved local well-posedness in $H^s(\R)$ for $s>1$ under the presence of dissipation $\beta <0$ by using short-time $U^2$ and $V^2$ spaces.
In \cite{KT23-1} they also revealed that the nature of the equation changes dramatically under periodic boundary conditions: the dissipative structure makes the equation of parabolic type through the resonant frequency interactions.
Using the parabolic smoothing effect for periodic domain $\T$, they proved global well-posedness in $H^s(\mathbb{T})$, $s>1/4$ in the case $\beta<0$~\cite{KT-gauge} and non-existence of solutions for $s>3/2$ in the case $\beta >0$~\cite{KT23-1}.
See \cite[Introduction]{KT-gauge} for more discussion on comparison of derivative NLS and KDNLS, methods of proof of well-posedness, and related results.

In this paper, we aim to address the local and global well-posedness of \eqref{kdnls} in $H^2$ and in $H^2\cap H^{1,1}$.
Here, we define the weighted Sobolev space $H^{s,m}$ by the norm
\[ \| f\|_{H^{s,m}}:=\| \bra{x}^m\bra{\p_x}^sf\|_{L^2}, \]
where 
\[ \bra{x}:=(1+|x|^2)^{\frac12},\quad \bra{\p_x}^s:=\mathcal{F}^{-1}\bra{\xi}^s\mathcal{F}.\]
Now, we introduce the first theorem.
\begin{thm}[Local well-posedness in $H^2$]\label{thm:lwp}
Let $\alpha ,\beta \in \R$, then the Cauchy problem \eqref{kdnls} is locally well-posed in $H^2(\mathbb{R})$. More precisely, the following hold:

(i) For any $\phi\in H^2$, there exist $T=T(\| \phi\|_{H^2})>0$ and a unique solution $u\in C([0,T],H^2)$ of \eqref{kdnls} on $[0,T]$.
Moreover, the solution map $\phi\mapsto u$ from $H^2$ to $C([0,T],H^2)$ is continuous.

(ii) If in addition $\phi\in H^2\cap H^{1,1}$, then $u\in C([0,T],H^2\cap H^{1,1})$ with the same $T$.
Moreover, the solution map is continuous as a map from $H^2\cap H^{1,1}$ to $C([0,T],H^2\cap H^{1,1})$.

(iii) Let $X^k$ be either $H^k$ or $H^k\cap H^{k-1,1}$ for any integer $k\geq 3$.
If in addition $\phi \in X^k$, then $u\in L^\infty([0,T],X^k)\cap C([0,T],X^{k-1})$ with the same $T$.
\end{thm}

To the best of our knowledge, Theorem~\ref{thm:lwp} is the first result on (large-data) well-posedness for KDNLS with $\beta >0$.
It is noteworthy that, for $\beta >0$, the forward-in-time well-posedness fails on the torus in any Sobolev spaces due to non-existence of solutions (see \cite{KT23-1}). 
In contrast, our result establishes local well-posedness even for $\beta>0$, showing that the ill-posedness phenomena are directly related not only to the sign of $\beta$, but also to the geometry of the underlying domain.

We remark that in (ii) we prove not only the persistence of $H^{1,1}$ but also the continuity of the solution map in the $H^2\cap H^{1,1}$ topology.
Therefore, (i) and (ii) imply the local well-posedness in $H^2\cap H^{1,1}$, which we will use in our next work \cite{KL-scat} (see Remark~\ref{rem:thm} below).
On the other hand, the persistence of regularity property given in (iii) is weaker than the case of $H^2\cap H^{1,1}$ in (ii).
In particular, it does not imply local well-posedness in any smaller space $X^k$ ($k\geq 3$) in the sense that the strong continuity of the solution in $X^k$ and the continuity of the solution map in the $X^k$ topology are missing.
Although further consideration would reveal these properties and yield local well-posedness in $X^k$, we will not address it in the present paper to avoid overcomplicating the proof.
Note that the weak statement in (iii) will be enough for our application in \cite{KL-scat}.

To prove Theorem~\ref{thm:lwp}, we employ the energy method used in the  dissipative case $\beta <0$ (see \cite[Appendix]{KT-gauge}) but combine it with the gauge transformation technique developed for the periodic problem, to deal with the case $\beta>0$.
Our proof begins with proving a priori estimates for regularized equations of \eqref{kdnls}. 
The most delicate resonance in the energy estimates arises from the nonlinear term $\cH(\ol{u}\p_x u)u$.
As observed in the previous result for periodic domain \cite{KT-gauge}, this type of resonance can be effectively removed by introducing frequency-restricted gauge transformations.
While the method of gauge transformation has been previously used in the context of energy estimates for derivative NLS with general nonlinearities (see, e.g., \cite{C94}), our gauge transformations are different in that they need to be defined separately for positive- and negative-frequency parts of the solution.
We also note that simpler gauge transformations compared to those in \cite{KT-gauge} turn out to be sufficient for the energy bounds.
Then, thanks to the gauge transformations, this allows us to use the normal form approach (or in other words, modified energy method) appropriately in the energy estimates to deal with the derivative losses. 
The processes for $H^2$ bounds and weighted bounds are not significantly different; however, the commutative relations between the operator 
\[ J(t):= x +2it\p_x \]
and other operators are utilized to handle terms with weight more carefully. 
After we prove the a priori bounds for regularized equations, we prove difference estimates in $H^2$ and $H^{1,1}$, respectively. 
These estimates allow us to prove Theorem \ref{thm:lwp}.

The second aim of this paper focuses on the global well-posedness. 
In the following theorem, we extend our well-posedness result in $H^2$ globally in the dissipative case $\beta<0$.
\begin{thm}[Global well-posedness in $H^2$]\label{thm:gwp}
	Let $\alpha \in \mathbb{R}$ and $\beta<0$. Then, the Cauchy problem \eqref{kdnls} is (forward in time) globally well-posed in $H^2(\mathbb{R})$.
\end{thm}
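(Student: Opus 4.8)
The plan is to upgrade the local solution furnished by Theorem~\ref{thm:lwp} to a global one by establishing, for $\beta<0$, an a priori bound on $\|u(t)\|_{H^2}$ that remains finite on every compact time interval. The key structural fact is that the local existence time in Theorem~\ref{thm:lwp} depends only on $\|\phi\|_{H^2}$; hence if $u$ is the maximal solution on $[0,T^*)$ and $\|u(t)\|_{H^2}$ stays bounded as $t\uparrow T^*<\infty$, the local theory can be reapplied starting from a time close to $T^*$ to extend the solution past $T^*$, contradicting maximality. Thus it suffices to rule out finite-time blow up of the $H^2$ norm and then to propagate the weighted regularity. The weighted part is handled last: once the $H^2$ bound is known, an energy estimate for $Ju=(x+2it\p_x)u$, using the commutation relations between $J$ and the linear and nonlinear operators mentioned after Theorem~\ref{thm:lwp}, yields a differential inequality $\tfrac{d}{dt}\|u\|_{H^{1,1}}^2\les C(\|u\|_{H^2})\|u\|_{H^{1,1}}^2$, whose solution is finite on any bounded interval by Gronwall. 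So no separate obstruction arises from the weight.

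The a priori $H^2$ bound is obtained by a bootstrap on regularity that exploits the dissipative sign $\beta<0$. At the base level one has the exact identity
\[ \tfrac{d}{dt}\|u(t)\|_{L^2}^2=\beta\big\|D_x^{1/2}(|u|^2)\big\|_{L^2}^2\le 0, \]
so the mass is nonincreasing and $\|u(t)\|_{L^2}\le\|\phi\|_{L^2}$ for all $t$. To climb to $H^1$ and then to $H^2$ I would reuse the gauged/modified energy constructed in the proof of Theorem~\ref{thm:lwp}: let $E_2(u)$ be the modified energy equivalent to $\|u\|_{H^2}^2$ (up to factors depending only on $\|u\|_{H^1}$) in which the frequency-restricted gauge transformations have already removed the worst resonant interaction coming from $\cH(\ol{u}\p_x u)u$, and in which the normal-form reduction has absorbed the derivative-losing nonresonant terms. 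Differentiating $E_2$ along the flow, the surviving contributions split into (i) terms bounded by $C(\|u\|_{H^1})E_2(u)$ and (ii) a genuinely dissipative, negative contribution proportional to $\beta$ times a positive quadratic form in high frequencies. Discarding (ii) and invoking a preliminary $H^1$ bound (derived in the same manner from the $L^2$ bound and the dissipation) gives $\tfrac{d}{dt}E_2(u)\le C(\|u\|_{H^1})E_2(u)$, whence Gronwall produces an $H^2$ bound finite on every bounded interval.

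The main obstacle is precisely the closing of step (ii) at top order. In $\tfrac{d}{dt}\|\p_x^2u\|_{L^2}^2$ the nonlinearity $\p_x(\cdots)$ loses one derivative, and the most dangerous contribution is the resonant piece generated by $\cH(\ol{u}\p_x u)u$, which after integration by parts does not close by Cauchy--Schwarz alone. The dissipation supplies a negative term of the schematic form $\beta\big\|D_x^{1/2}(\text{high-frequency quadratic})\big\|_{L^2}^2$, and the crux is to show that this good term dominates the residual derivative loss with a constant depending only on the already-controlled lower norms. This is where the precise gauge choice and the sign $\beta<0$ are indispensable, and exactly where the argument would break for $\beta>0$. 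A secondary technical point is that all of these manipulations must be carried out on the regularized equations used in the proof of Theorem~\ref{thm:lwp} and shown to be uniform in the regularization parameter, so that the bounds persist in the limit.

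Finally, combining the uniform-on-compact-intervals $H^2$ bound with the local theory extends $u$ to a solution on $[0,\infty)$, and the Gronwall control of $\|u\|_{H^{1,1}}$ shows $u\in C([0,\infty),H^2\cap H^{1,1})$; uniqueness and continuity of the solution map are inherited from Theorem~\ref{thm:lwp} on each finite interval, which completes the proof.
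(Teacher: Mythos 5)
Your overall skeleton matches the paper's: since the local existence time in Theorem~\ref{thm:lwp} depends only on $\|\phi\|_{H^2}$, everything reduces to a global-in-time a priori bound on $\|u(t)\|_{H^2}$ (this is Proposition~\ref{prop:H2bound}), and the $L^2$ identity and the global $H^1$ bound (Proposition~\ref{prop:H1bound}, quoted from \cite{KT-gauge}) enter as you describe. The gap is the step where you assert $\tfrac{d}{dt}E_2(u)\le C(\|u\|_{H^1})E_2(u)$: nothing you describe produces a coefficient depending only on $\|u\|_{H^1}$. Concretely, the paper computes (with no gauge transformation, which is unnecessary for $\beta<0$; see the Remark at the end of Section~\ref{sec:gauge})
\begin{align*}
\frac{d}{dt}\int_\R|\p_x^2u|^2\,dx
&=\beta\big\|D_x^{\frac52}(|u|^2)\big\|_{L^2}^2
+5\int_\R\cH_{\alpha,\beta}(\p_x^2|u|^2)\,\p_x(|\p_xu|^2)\,dx
+5\int_\R\cH_{\alpha,\beta}(\p_x|u|^2)\,|\p_x^2u|^2\,dx.
\end{align*}
The dissipative first term absorbs half of the second integral via Young's inequality, but the third integral is bounded only by $\|\p_x(|u|^2)\|_{L^\infty}\|\p_x^2u\|_{L^2}^2$, and $\|\p_x(|u|^2)\|_{L^\infty}$ is \emph{not} controlled by $\|u\|_{H^1}$: by Gagliardo--Nirenberg it costs roughly $\|u\|_{H^1}^{3/2}\|u\|_{H^2}^{1/2}$, so with $y=\|u\|_{H^2}^2$ your differential inequality is $y'\les y^{5/4}$, which blows up in finite time. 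The same superlinearity is already visible in the local proof, which only achieves $\tfrac{d}{dt}\wt X\le C_4(\wt X)\wt X$ with $C_4$ depending on $\|u\|_{H^2}$ --- that is exactly why Theorem~\ref{thm:lwp} is local. The paper closes the global bound with two ingredients absent from your proposal: the Brezis--Gallouet--Wainger inequality (Lemma~\ref{lem:BGW}), which trades $\|\cH_{\alpha,\beta}(\p_x|u|^2)\|_{L^\infty}$ for $\|\p_x(|u|^2)\|_{H^{1/2}}\log^{1/2}(e+\|\p_x^2u\|_{L^2}^2)$ plus a lower-order piece, and the \emph{space-time} dissipative bound $\int_0^t\|D_x^{3/2}(|u(s)|^2)\|_{L^2}^2\,ds\le C\|\p_x\phi\|_{L^2}^2$ from Proposition~\ref{prop:H1bound}, which makes the coefficient in the resulting logarithmic Gronwall inequality integrable in time ($\les t^{1/2}+t^{2/3}$) and yields $\|\p_x^2u(t)\|_{L^2}^2\le C_2e^{C_1t^{4/3}}$. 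Without the logarithmic refinement and the time-integrability supplied by the lower-order dissipation, the $H^2$ bound does not close.

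A secondary point: carrying the frequency-restricted gauge transformations and the normal-form corrections into the global estimate is not only unnecessary for $\beta<0$ but counterproductive, since the resonant term $\beta\cH(\ol u\p_xu)u$ is precisely what generates the good dissipative terms $\beta\|D_x^{k+\frac12}(|u|^2)\|_{L^2}^2$ after symmetrization; gauging it away would obscure the sign structure you need.
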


As far as we know, this is the first global well-posedness result for KDNLS posed on $\mathbb{R}$.
We prove a global a priori bound in $H^2$ in Proposition \ref{prop:H2bound} below, and this and the local well-posedness in Theorem~\ref{thm:lwp} immediately imply Theorem \ref{thm:gwp}.
Clearly, the same global result holds in $H^2(\mathbb{R})\cap H^{1,1}(\mathbb{R})$.

\begin{rem}\label{rem:thm}
	Our global well-posedness result naturally leads to the investigation of the asymptotic behavior of solutions to \eqref{kdnls} on $\R$. In our next work \cite{KL-scat}, we consider the asymptotic behavior of small solutions to \eqref{kdnls} with $\beta \neq 0$ in $H^2 \cap H^{1,1}$ (especially both signs of $\beta$). 
The (local) well-posedness result of this paper provides a rigorous justification for various formal arguments in analyzing the asymptotic behavior.
\end{rem}


This paper is organized as follows.
In Section~\ref{sec:gauge}, we introduce a regularized equation and frequency-restricted gauge transformations for KDNLS.
In Section~\ref{sec:useful}, we collect some useful lemmas.
Various a priori estimates for smooth solutions to the regularized equation are shown in Section~\ref{sec:apriori}, and those for the difference of solutions are given in Section~\ref{sec:diff}.
Once these estimates are available, Theorem~\ref{thm:lwp} can be shown by a standard argument; we give a proof in Appendix~\ref{sec:proof-lwp}.
Finally, a global-in-time $H^2$ a priori bound is stated and proved in Section~\ref{sec:globalh2}, which together with Theorem~\ref{thm:lwp} implies Theorem~\ref{thm:gwp}.


\subsection*{Notations}
\begin{itemize}
\item
(Mixed-normed spaces) 
For $T>0$ and a Banach space $X$, we denote $C_TX:=C([0,T],X)$ and $\| u\| _{L^\infty_TX}:=\| \| u(t)\|_X \|_{L^\infty([0,T])}$.

\item 
As usual different positive constants are denoted by the same letter $C$, if not specified.
$A\lesssim B$ and $A\gtrsim B$ mean that $A\le CB$ and $A\ge C^{-1}B$,
respectively for some $C>0$. $A\sim B$ means that $A\lesssim B$
and $A\gtrsim B$. 
We also write as $A\les _MB$ to emphasize the dependence of the implicit constant on the quantity $M$.

\item 
We fix our definition of the (one-dimensional) Fourier and inverse Fourier transforms as
\[ \cF_x[f](\xi )=\wh{f}(\xi) :=\frac{1}{\sqrt{2\pi}}\int_\R e^{-ix\xi}f(x)dx \quad \mbox{and}\quad \cF^{-1}_\xi [g](x):=\frac{1}{\sqrt{2\pi}}\int_\R e^{ix\xi}g(\xi)d\xi . \]

\item
(Littlewood-Paley operators) 
Let $\varrho$ be a Littlewood-Paley function such that $\varrho\in C_{0}^{\infty}(\{ |\xi|<2\})$ with $\varrho(\xi)=1$ for $|\xi|\le1$ and define $\varrho_{N}(\xi):=\varrho\big(\frac{\xi}{N}\big)-\varrho\big(\frac{2\xi}{N}\big)$ for $N\in2^{\mathbb{Z}}$. 
We write $\varrho_{\le N_{0}}:=\sum_{N\leq N_{0}}\varrho_{N}=\varrho\big( \frac{\xi}{N_0}\big)$ and $\varrho_{>N_0}:=1-\varrho_{\leq N_0}$.
Then we define the (spatial) frequency projection $P_{N}$ by $\mathcal{F}(P_{N}f)(\xi)=\varrho_{N}(\xi)\widehat{f}(\xi)$. 
In addition, we write $P_{\leq N_0}:=\sum_{N\leq N_0}P_N$ and $P_{>N_0}:=\sum _{N>N_0}P_N$.
\end{itemize}


\section{Regularized equation and gauge transformation}
\label{sec:gauge}

We write the nonlinear terms in \eqref{kdnls} as $i\p_x\mathcal{N}[u]$, where
\[ \mathcal{N}[u]:=\alpha |u|^2u +\beta \cH(|u|^2)u.\]
We also introduce the notation 
\[ \halbe :=\alpha \,{\rm Id}+\beta\, \cH ,\]
so that $\mathcal{N}[u]=\halbe (|u|^2)u$.

\subsection{Regularized equations}

To prove Theorem~\ref{thm:lwp}, we consider the regularized equation with smooth initial data. Indeed, for $\ve \in(0,1)$, we consider
\begin{align}\label{kdnls-ve}
	\left\{ \begin{aligned}
		\p_tu^\ve -i\p_x^2u^\ve &= \ve \p_x^2u^\ve +\p_x \mathcal{N}[u^\ve ] ,\\
		u^\ve (0,x)&=\phi^\ve (x).
	\end{aligned}\right.
\end{align}
We also consider the difference of two solutions to \eqref{kdnls-ve} with possibly different $\ve$'s and $\phi^{\ve}$'s.
Let $ 0<\ve_2 \le \ve_1 <1$ and for $j=1,2$, let $u^{\ve_j}$
be a solution to \eqref{kdnls-ve} with $\ve=\ve_j$ and $\phi^\ve=\phi^{\ve_j}$.
Set
\[ w:=u^{\ve_1}-u^{\ve_2}. \]
Then the equation for $w$ is given below:
\begin{align}\label{eq:w}
\left\{ \begin{aligned}
\p_tw-i\p_x^2w&=\ve_2\p_x^2w+(\ve_1-\ve_2)\p_x^2u^{\ve_1}+\p_x\wt{\mathcal{N}}[u^{\ve_1},w],\\
w(0,x)&=\phi^{\ve_1}(x)-\phi^{\ve_2}(x),
\end{aligned}\right.
\end{align}
where
\[ \wt{\mathcal{N}}[u^{\ve_1},w]:=\mathcal{N}[u^{\ve_1}]-\mathcal{N}[u^{\ve_1}-w]=\halbe(|u^{\ve_1}|^2)u^{\ve_1}-\halbe(|u^{\ve_1}-w|^2)(u^{\ve_1}-w).\]

We also consider the equation for $J(t)u^\ve(t)$, $J(t):=x+2it\p_x$.
Recall that the operator $J(t)$ satisfies the commutator relations
\[ [J(t),\p_t-i\p_x^2]=0,\quad [J(t),\p_x]=[x,\p_x]=-1,\quad [J(t),\p_x^2]=[x,\p_x^2]=-2\p_x,\]
and the ``Leibniz rule''
\[ J[fg]=2it(\p_xf)g+fJg,\quad J\big[ \halbe(f\ol{g})h\big] =\halbe((Jf)\ol{g})h-\halbe(f\ol{Jg})h+\halbe (f\ol{g})Jh.\]
For a solution $u^\ve$ to \eqref{kdnls-ve}, a direct calculation shows that $Ju^\ve$ satisfies the following equation:
\begin{equation}\label{eq:Ju}
\p_tJu^\ve -i\p_x^2Ju^\ve = \ve \p_x^2Ju^\ve +\p_xJ\mathcal{N}[u^\ve] -2\ve \p_xu^\ve -\mathcal{N}[u^\ve].
\end{equation}
Similarly, the equation for $Jw$ is given by
\begin{equation}\label{eq:Jw}
\begin{aligned}
\p_tJw-i\p_x^2Jw&= \ve_2\p_x^2Jw+(\ve_1-\ve_2)\p_x^2Ju^{\ve_1}+\p_xJ\wt{\mathcal{N}}[u^{\ve_1},w]\\
&\quad -2\ve_2\p_xw-2(\ve_1-\ve_2)\p_xu^{\ve_1}-\wt{\mathcal{N}}[u^{\ve_1},w].
\end{aligned}
\end{equation}

\subsection{Gauge transformation}
Now, we introduce a gauge transformation which removes the most problematic nonlinear interactions of the high-low type; i.e., interactions between high frequency components of the differentiated function and much lower frequency components of the other two.
The gauge transformation for \eqref{kdnls} has been investigated in \cite{KT-gauge} to treat the nonlinear terms $2\alpha |u|^2\partial_xu$, $\beta \mathcal{H}(|u|^2)\partial_xu$ and $\beta \mathcal{H}(\ol{u}\partial_xu)u$. 
In this work we only remove the high-low interaction from $\beta \mathcal{H}(\ol{u}\partial_xu)u$, as the other two terms exhibit good symmetry so that we can apply integration by parts in the energy estimates.
To this end, we define 
\[ \cF(P_\pm f)(\xi) := \chi_\pm(\xi) \wh{f}(\xi),\qquad \cF(Q_\pm f)(\xi ):=\chi_\pm(\xi) \varrho_{>1}(\xi) \wh{f}(\xi),
\]
where $\chi_\pm$ is the characteristic function of $\pm (0,\infty)$.
Note that $\mathcal{H}=-iP_++iP_-$ and $Q_\pm \mathcal{H} =\mp iQ_\pm$.
Hence, if we fix the sign of output frequencies by applying $Q_\pm$, then the high-low interactions of $Q_\pm \big[ \beta \mathcal{H}(\ol{u}\partial_xu)u\big]$ are the same as those of $\beta \ol{u}(Q_\pm \mathcal{H}\partial_xu)u=\mp i\beta |u|^2\p_xQ_\pm u$.
This suggests (see, e.g., \cite[Section~2]{KT94}) that the suitable gauge function for the equation of $Q_\pm u$ is given by $e^{\rho ^\pm [u]}$, where 
\begin{equation}\label{def:gauge}
\rho ^\pm [u]:=\mp \frac\beta2 \int_{-\infty}^x |u(y)|^2\,dy.
\end{equation}
For the difference equation \eqref{eq:w}, the nonlinear terms can be written as
\begin{align}\label{eq:wtN}
\begin{aligned}
\p_x\wt{\mathcal{N}}[u^{\ve_1},w]
&=\halbe\big(\p_xw\,(\ol{u^{\ve_1}-w})\big) (u^{\ve_1}-w)+\halbe\big( (u^{\ve_1}-w)\p_x\ol{w}\big) (u^{\ve_1}-w)\\
&\qquad +\halbe(|u^{\ve_1}-w|^2) \p_xw +\wt{\cR} \\
&=\halbe(\p_xw\, \ol{u^{\ve_2}})u^{\ve_2}+\halbe(u^{\ve_2}\p_x\ol{w})u^{\ve_2}+\halbe(|u^{\ve_2}|^2)\p_xw +\wt{\cR},
\end{aligned}
\end{align}
where $\wt{\cR}$ is the sum of the terms in which $\p_x$ hits one of $u^{\ve_1}$'s.
In the energy estimates for $w$, it will turn out that only the term $\beta \cH(\p_xw\, \ol{u^{\ve_2}})u^{\ve_2}$ is problematic.
Since the high-low interaction part of $\beta Q_\pm \big[ \cH(\p_xw\, \ol{u^{\ve_2}})u^{\ve_2}\big]$ is identical with that of $\beta |u^{\ve_2}|^2Q_\pm \cH\p_xw=\mp i\beta |u^{\ve_2}|^2\p_xQ_\pm w$, we will use the gauge functions $e^{\rho^\pm[u^{\ve_2}]}$ for $Q_\pm w$.

\begin{rem}
In the dissipative case $\beta <0$, the energy method works without the gauge transformation (see \cite[Appendix]{KT-gauge}).
For instance, in the $L^2$ energy estimate for the difference $w$, we can treat the term $\beta \p_x\cH(2{\rm Re}(u^{\ve_2}\ol{w}))u^{\ve_2}$ including the problematic case $\beta \cH(\p_xw\,\ol{u^{\ve_2}})u^{\ve_2}$, as follows:
\[ 2{\rm Re} \int \beta \p_x\cH\big[ 2{\rm Re} (u^{\ve_2}\ol{w})\big] u^{\ve_2}\ol{w}\,dx=\beta \Big\| D_x^{1/2}\big[ 2{\rm Re}(u^{\ve_2}\ol{w})\big]\Big\|_{L^2}^2\leq 0. \]
In the case $\beta >0$, however, this term is not estimated by $C\| u^{\ve_2}\|_{H^2}^2\| w\|_{L^2}^2$.
We thus need to exploit the gauge transformations to suitably handle the problematic term and refine the estimate.
\end{rem}


\section{Useful lemmas}
\label{sec:useful}

In this section, we present several estimates that will be used in the subsequent proof of a priori bounds.
\begin{lemma}\label{lem:commutator}
We have
\[ \big\| [Q_\pm,f]\p_xg\big\|_{L^2}\les \| f\|_{H^2}\| g\|_{L^2}. \]
\end{lemma}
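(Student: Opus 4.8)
The plan is to pass to the Fourier side, read off the bilinear symbol of the commutator, and exploit the special structure of the multiplier $m_\pm(\xi):=\chi_\pm(\xi)\varrho_{>1}(\xi)$ defining $Q_\pm$. The first observation is that $m_\pm$ is in fact a smooth bounded symbol: since $\varrho_{>1}=1-\varrho$ vanishes on $\{|\xi|\le 1\}$, the jump of $\chi_\pm$ at the origin is cancelled, so $m_\pm\in C^\infty(\R)$ with $m_\pm'$ supported in $\{1\le|\xi|\le 2\}$ and $\|m_\pm'\|_{L^\infty}\les 1$. Writing $\widehat{\p_xg}(\eta)=i\eta\,\wh g(\eta)$ and $\cF[fh]=\tfrac{1}{\sqrt{2\pi}}\wh f*\wh h$, a direct computation gives
\[ \cF\big[[Q_\pm,f]\p_xg\big](\xi)=\frac{1}{\sqrt{2\pi}}\int_\R \big(m_\pm(\xi)-m_\pm(\eta)\big)\,i\eta\,\wh f(\xi-\eta)\wh g(\eta)\,d\eta . \]
Thus it suffices to control the kernel $b(\xi,\eta):=\big(m_\pm(\xi)-m_\pm(\eta)\big)\eta$.

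The key step is the pointwise bound $|b(\xi,\eta)|\les \bra{\xi-\eta}$. By the fundamental theorem of calculus, $m_\pm(\xi)-m_\pm(\eta)=\int_\eta^\xi m_\pm'(\zeta)\,d\zeta$, which vanishes unless the segment between $\eta$ and $\xi$ meets $\mathrm{supp}(m_\pm')\subset\{1\le|\zeta|\le 2\}$; in any case $|m_\pm(\xi)-m_\pm(\eta)|\le 2$. It therefore remains to check that $|\eta|\les\bra{\xi-\eta}$ whenever $m_\pm(\xi)\neq m_\pm(\eta)$. This is immediate for $|\eta|\le 4$, and for $|\eta|>4$ it follows from a short case analysis: e.g.\ for $Q_+$ and $\eta>4$ one has $m_+(\eta)=1$, so $m_+(\xi)\neq 1$ forces $\xi<2$ and hence $|\xi-\eta|>\eta/2$; the remaining sign combinations are analogous. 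Combining the two bounds yields $|b(\xi,\eta)|\les\bra{\xi-\eta}$.

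With the kernel bound in hand, the estimate follows by duality and Young's inequality. For $h\in L^2$ with $\|h\|_{L^2}=1$, Plancherel gives
\[ \Big|\int \wh h(\xi)\,\cF\big[[Q_\pm,f]\p_xg\big](\xi)\,d\xi\Big|\les \int\!\!\int |\wh h(\xi)|\,\bra{\xi-\eta}\,|\wh f(\xi-\eta)|\,|\wh g(\eta)|\,d\eta\,d\xi . \]
Writing $\bra{\xi-\eta}=\bra{\xi-\eta}^{-1}\bra{\xi-\eta}^{2}$ and setting $F(\zeta):=\bra{\zeta}^{2}|\wh f(\zeta)|$, the right-hand side equals $\int|\wh h|\,\big((\bra{\cdot}^{-1}F)*|\wh g|\big)$, so Cauchy--Schwarz and Young's inequality bound it by $\|h\|_{L^2}\,\|\bra{\cdot}^{-1}F\|_{L^1}\,\|g\|_{L^2}$. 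Finally, Cauchy--Schwarz in $\zeta$ gives $\|\bra{\cdot}^{-1}F\|_{L^1}=\int\bra{\zeta}\,|\wh f(\zeta)|\,d\zeta\les\|f\|_{H^{s}}$ for any $s\in(3/2,2]$, hence $\les\|f\|_{H^2}$, which completes the proof.

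The only genuinely delicate point is the structural bound $|\eta|\les\bra{\xi-\eta}$ on the support of $m_\pm(\xi)-m_\pm(\eta)$: this is precisely the mechanism by which the commutator recovers the derivative in $\p_xg$, transferring it onto $f$ (which lives in $H^2$). Everything else is routine; note in particular that the argument uses only $f\in H^{3/2+}$, so the stated $H^2$ bound holds with room to spare.
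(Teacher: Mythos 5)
Your proof is correct and follows essentially the same route as the paper's: both pass to the Fourier side and reduce the claim to the pointwise kernel bound $\big|\big((\chi_\pm\varrho_{>1})(\xi)-(\chi_\pm\varrho_{>1})(\eta)\big)\eta\big|\les\bra{\xi-\eta}$ (the paper states this as the contrapositive, namely that $\bra{\eta}\gg\bra{\xi-\eta}$ forces the symbol difference to vanish), and then conclude by Young's and Cauchy--Schwarz inequalities. Your write-up simply makes the case analysis behind the kernel bound explicit and adds the (true but unneeded) observation that $f\in H^{3/2+\delta}$ already suffices.
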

\begin{proof}
The claimed estimate is equivalent to 
\[ \left\| \int_{\R}\big( (\chi_\pm \varrho_{>1})(\xi)-(\chi_\pm \varrho_{>1})(\eta)\big) \eta \wh{f}(\xi-\eta)\wh{g}(\eta)\,d\eta \right\|_{L^2_\xi}\les \| \bra{\xi}^2\wh{f}\|_{L^2_\xi}\| \wh{g}\|_{L^2_\xi}.\]
Since the condition $\bra{\eta}\gg \bra{\xi-\eta}$ implies that $(\chi_\pm \varrho_{>1})(\xi)-(\chi_\pm \varrho_{>1})(\eta)=0$, we have
\[ \big| \big( (\chi_\pm \varrho_{>1})(\xi)-(\chi_\pm \varrho_{>1})(\eta) \big)\eta\big| \les \bra{\xi-\eta}. \]
This inequality, Young's and H\"older's inequalities imply the claim.
\end{proof}

\begin{lemma}\label{lem:rho}
	For $u\in H^2$, we have $e^{\rho^\pm[u]}\in W^{2,\infty}$ and
\begin{gather*}
	\| e^{\rho^\pm[u]}\|_{L^\infty}\leq e^{C\| u\|_{L^2}^2},\qquad \| \p_xe^{\rho^\pm[u]}\|_{L^\infty}\les e^{C\| u\|_{L^2}^2}\| u\|_{H^1}^2,\\
	\| \p_x^2e^{\rho^\pm[u]}\|_{L^\infty}\les e^{C\| u\|_{L^2}^2}\big( \| u\|_{H^1}\| u\| _{H^2}+\| u\|_{H^1}^4\big) .
\end{gather*}
Moreover, if $u^\ve$ is a solution to \eqref{kdnls-ve}, we have
\[ \| \p_te^{2\rho^\pm[u^\ve(t)]}\|_{L^\infty}\les e^{C\| u^\ve\|_{L^2}^2}\big( \| u^\ve\|_{H^1}\| u^\ve\|_{H^2}+\| u^\ve\|_{H^1}^4\big) .\]
\end{lemma}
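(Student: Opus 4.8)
The plan is to differentiate $e^{\rho^\pm[u]}$ explicitly and bound each factor by Sobolev embedding. Since $\beta$ is real and $|u|^2\ge 0$, the function $\rho^\pm[u]$ is real-valued with $|\rho^\pm[u](x)|\le \frac{|\beta|}{2}\|u\|_{L^2}^2$ uniformly in $x$, which yields $\|e^{\rho^\pm[u]}\|_{L^\infty}\le e^{C\|u\|_{L^2}^2}$ at once. Differentiating the defining integral \eqref{def:gauge} gives $\p_x\rho^\pm[u]=\mp\frac{\beta}{2}|u|^2$, so the chain rule yields $\p_x e^{\rho^\pm[u]}=\mp\frac{\beta}{2}|u|^2e^{\rho^\pm[u]}$ and
\[ \p_x^2 e^{\rho^\pm[u]}=\Big(\mp\tfrac{\beta}{2}\p_x(|u|^2)+\tfrac{\beta^2}{4}|u|^4\Big)e^{\rho^\pm[u]}. \]
Each expression is the already-bounded factor $e^{\rho^\pm[u]}$ times a polynomial in $u$ and its derivatives, so the remaining task is to estimate $\||u|^2\|_{L^\infty}$, $\|\p_x(|u|^2)\|_{L^\infty}$ and $\||u|^4\|_{L^\infty}$.

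For these I would use the one-dimensional interpolation inequality $\|f\|_{L^\infty}^2\les\|f\|_{L^2}\|\p_xf\|_{L^2}$. It gives $\|u\|_{L^\infty}^2\les\|u\|_{H^1}^2$ (hence the first-derivative bound and the $\|u\|_{H^1}^4$ contribution to the second-derivative bound) and $\|\p_x u\|_{L^\infty}^2\les\|u\|_{H^1}\|u\|_{H^2}$. Writing $\p_x(|u|^2)=2{\rm Re}(\ol u\,\p_x u)$ and combining these yields $\|\p_x(|u|^2)\|_{L^\infty}\les\|u\|_{L^\infty}\|\p_x u\|_{L^\infty}\les\|u\|_{H^1}^{3/2}\|u\|_{H^2}^{1/2}$, which is $\les\|u\|_{H^1}\|u\|_{H^2}$ since $\|u\|_{H^1}\le\|u\|_{H^2}$. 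Collecting the terms produces exactly the stated $W^{2,\infty}$ bounds, and membership in $W^{2,\infty}$ follows because all three expressions are bounded.

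For the time derivative I would use the regularized equation \eqref{kdnls-ve} directly. From $\p_t e^{2\rho^\pm[u^\ve]}=(\p_t\,2\rho^\pm[u^\ve])\,e^{2\rho^\pm[u^\ve]}$ and $\p_t\,2\rho^\pm[u^\ve]=\mp\beta\int_{-\infty}^x\p_t|u^\ve|^2\,dy$, it suffices to bound this antiderivative in $L^\infty_x$. Substituting $\p_t u^\ve=(i+\ve)\p_x^2u^\ve+\p_x\mathcal N[u^\ve]$ into $\p_t|u^\ve|^2=2{\rm Re}(\ol{u^\ve}\,\p_t u^\ve)$, dominating the antiderivative by the full-line $L^1$ norm, and using $|i+\ve|\le\sqrt2$, Cauchy--Schwarz gives
\[ \Big\|\int_{-\infty}^x\p_t|u^\ve|^2\,dy\Big\|_{L^\infty_x}\les\|u^\ve\|_{L^2}\|\p_x^2u^\ve\|_{L^2}+\|u^\ve\|_{L^2}\|\p_x\mathcal N[u^\ve]\|_{L^2}. \]
The first term is $\les\|u^\ve\|_{H^1}\|u^\ve\|_{H^2}$. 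The key remaining estimate is $\|\p_x\mathcal N[u^\ve]\|_{L^2}\les\|u^\ve\|_{H^1}^3$: expanding $\p_x\mathcal N$ by the product rule and, for the nonlocal part, the commutation $\cH\p_x=\p_x\cH$, each term is handled by H\"older together with $\|\cH g\|_{L^2}=\|g\|_{L^2}$ and the embedding $H^1\hookrightarrow L^\infty$, which in particular gives $\|\cH(|u^\ve|^2)\|_{L^\infty}\les\||u^\ve|^2\|_{H^1}\les\|u^\ve\|_{H^1}^2$. Multiplying by $\|u^\ve\|_{L^2}$ yields the $\|u^\ve\|_{H^1}^4$ contribution, and multiplying by $\|e^{2\rho^\pm[u^\ve]}\|_{L^\infty}\le e^{C\|u^\ve\|_{L^2}^2}$ gives the claim.

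The main obstacle is the nonlocal nonlinearity in the time-derivative estimate: since $\cH$ is unbounded on $L^\infty$, the term $\cH(|u^\ve|^2)u^\ve$ cannot be controlled by placing $\cH(|u^\ve|^2)$ in $L^\infty$ naively, and one must instead route through the $L^2$-boundedness of $\cH$ together with the one-dimensional embedding $H^1\hookrightarrow L^\infty$ applied to $|u^\ve|^2\in H^1$. A second point worth flagging is that the antiderivative $\int_{-\infty}^x$ is not obviously bounded in $x$; this is resolved by dominating it by the full-line $L^1$ norm and applying Cauchy--Schwarz. In particular, no conservation or divergence structure is needed to handle the second-order term $\p_x^2u^\ve$, since pairing it directly with $u^\ve$ in $L^2$ already produces $\|u^\ve\|_{L^2}\|u^\ve\|_{H^2}$.
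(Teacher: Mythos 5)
Your proposal is correct, and for the spatial derivatives it coincides with the paper's (terse) proof: differentiate $e^{\rho^\pm[u]}$ via $\p_x\rho^\pm[u]=\mp\frac\beta2|u|^2$ and close with H\"older, Gagliardo--Nirenberg and $H^1\hookrightarrow L^\infty$. For the time derivative, however, you take a genuinely different route. The paper rewrites $\p_t|u^\ve|^2$ in divergence form, e.g.\ $2\mathrm{Re}(i\ol{u^\ve}\p_x^2u^\ve)=\p_x\big(2\mathrm{Re}(i\ol{u^\ve}\p_xu^\ve)\big)$ and similarly for the viscosity and nonlinear terms, so that the antiderivative $\int_{-\infty}^x$ collapses to a pointwise evaluation on the main terms (bounded via $\|u\|_{L^\infty}\|\p_xu\|_{L^\infty}\les\|u\|_{H^1}^{3/2}\|u\|_{H^2}^{1/2}$), with only the genuinely non-divergence remainders $-2\ve|\p_xu^\ve|^2$ and $-\beta\cH(|u^\ve|^2)\p_x(|u^\ve|^2)$ left under the integral, where they are estimated in $L^1$. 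You instead skip the divergence structure entirely and dominate $\int_{-\infty}^x\p_t|u^\ve|^2\,dy$ by the full-line $L^1$ norm, pairing $\ol{u^\ve}\p_x^2u^\ve$ directly by Cauchy--Schwarz to get $\|u^\ve\|_{L^2}\|\p_x^2u^\ve\|_{L^2}\le\|u^\ve\|_{H^1}\|u^\ve\|_{H^2}$, and handling $\ol{u^\ve}\,\p_x\mathcal N[u^\ve]$ via $\|\p_x\mathcal N[u^\ve]\|_{L^2}\les\|u^\ve\|_{H^1}^3$ (correctly routing the nonlocal factor through $\|\cH(|u^\ve|^2)\|_{L^\infty}\les\||u^\ve|^2\|_{H^1}$ rather than through a false $L^\infty$-boundedness of $\cH$). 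Your version is shorter and yields the same bound; the paper's divergence-form identity carries more structural information (it isolates the sign-definite dissipative contributions), but none of that is needed for the statement of this lemma.
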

\begin{proof}
The desired bounds follow from direct calculations and the H\"older, Sobolev inequalities.
For the last bound, we use the identity
	\begin{align*}
		\p_t\rho ^\pm[u^\ve] 
		&=\mp \frac\beta2\int_{-\infty}^x 2\mathrm{Re}\, \Big[  \Big(i\p_x^2u^\ve+\ve \p_x^2 u^\ve +\alpha \partial_x \big[ |u^\ve|^2u^\ve\big] +\beta \partial _x\big[ \mathcal{H}(|u^\ve|^2)u^\ve\big] \Big) \ol{u^\ve}\Big]\,dy \\
		&=\mp \frac\beta2\int_{-\infty}^x\Big[ \ve \p_x^2(|u^\ve|^2) + \partial _x\Big(2\mathrm{Re}\,(i\ol{u^\ve}\p_x u^\ve) +\frac{3}{2}\alpha |u^\ve|^4+2\beta \mathcal{H}(|u^\ve|^2)|u^\ve|^2\Big) \\
			&\hspace{6cm} -2\ve |\p_x u^\ve |^2 -\beta \mathcal{H}(|u^\ve|^2)\p_x(|u^\ve|^2)\Big]\,dy \\
		&=\mp \frac\beta2\Big[ \ve \p_x(|u^\ve|^2) + 2\mathrm{Re}\,(i\ol{u^\ve}\p_x u^\ve) +\frac{3}{2}\alpha |u^\ve|^4+2\beta \mathcal{H}(|u^\ve|^2)|u^\ve|^2 \\
			&\hspace{4cm} -2\ve \int_{-\infty}^x|\p_x u^\ve |^2\,dy -\beta \int_{-\infty}^x\mathcal{H}(|u^\ve|^2)\p_x(|u^\ve|^2)\,dy\Big] .
	\end{align*}
\end{proof}

\begin{lemma}\label{lem:dt-bound}
	Let $k\geq 2$ be an integer and $u$ be a solution to \eqref{kdnls-ve}. Then we have
\begin{align*}
	\|\p_t (e^{-it\partial_x^2}u(t))\|_{H^{k-1}} &\les \varepsilon \| \partial_x^{k+1}u(t)\|_{L^2}+\Big( 1+\| u(t)\|_{H^1}^2\Big) \|u(t)\|_{H^k},\\
	\|\p_t (e^{-it\partial_x^2}J(t)u(t))\|_{H^{k-2}} &\les \varepsilon \| \partial_x^{k}J(t)u(t)\|_{L^2}+\Big( 1+\| u(t)\|_{H^1}\| J(t)u(t)\|_{H^1}\Big) \|u(t)\|_{H^{k-1}}\\
&\quad +\| u(t)\|_{H^1}^2\Big( \| u(t)\|_{H^{k-2}}+\| J(t)u(t)\|_{H^{k-1}}\Big).
\end{align*}	
\end{lemma}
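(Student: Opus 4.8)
The plan is to remove the time derivative by conjugating with the free propagator. Writing $S(t):=e^{-it\p_x^2}$, which is a unitary Fourier multiplier on every $H^s(\R)$ and commutes with $\p_x^2$, one has the identity $\p_t(S(t)v)=S(t)(\p_tv-i\p_x^2v)$ for any sufficiently regular $v$. Applying it with $v=u$ and using \eqref{kdnls-ve}, and with $v=Ju$ and using \eqref{eq:Ju}, the unitary factor disappears under the Sobolev norms, so that
\[ \|\p_t(S(t)u)\|_{H^{k-1}}=\big\|\ve\p_x^2u+\p_x\mathcal{N}[u]\big\|_{H^{k-1}}, \]
\[ \|\p_t(S(t)Ju)\|_{H^{k-2}}=\big\|\ve\p_x^2Ju+\p_xJ\mathcal{N}[u]-2\ve\p_xu-\mathcal{N}[u]\big\|_{H^{k-2}}. \]
It then suffices to estimate these right-hand sides term by term, recalling $\mathcal{N}[u]=\halbe(|u|^2)u$.

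For the dissipative terms I would split the multiplier $\bra\xi^{s}\xi^2$ into high and low frequencies: the high part yields the top-order contributions $\ve\|\p_x^{k+1}u\|_{L^2}$ and $\ve\|\p_x^kJu\|_{L^2}$, while the low part carries a harmless factor $\ve\le1$ and is dominated by the other right-hand side terms. The linear term $2\ve\p_xu$ is controlled by $\|u\|_{H^{k-1}}$. The substance lies in the nonlinear terms. Using $\|\p_x\mathcal{N}[u]\|_{H^{k-1}}\le\|\mathcal{N}[u]\|_{H^k}$ together with the tame product (fractional Leibniz) estimate $\|fg\|_{H^s}\les\|f\|_{H^s}\|g\|_{L^\infty}+\|f\|_{L^\infty}\|g\|_{H^s}$, the boundedness of $\cH$ (hence of $\halbe$) on every $H^s$, and the one-dimensional embedding $H^1(\R)\hookrightarrow L^\infty(\R)$, I would prove $\|\halbe(|u|^2)u\|_{H^s}\les\|u\|_{H^1}^2\|u\|_{H^s}$ for all $s\ge0$. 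The only non-local subtlety is that $\cH$ is unbounded on $L^\infty$; this is bypassed by estimating the Hilbert-transform factor through $H^1$, namely $\|\halbe(|u|^2)\|_{L^\infty}\les\|\halbe(|u|^2)\|_{H^1}\les\||u|^2\|_{H^1}\les\|u\|_{H^1}^2$. Taking $s=k$ gives the cubic term $\|u\|_{H^1}^2\|u\|_{H^k}$ in the first bound, and $s=k-2$ gives the $\|u\|_{H^1}^2\|u\|_{H^{k-2}}$ term in the second.

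The weighted term $\p_xJ\mathcal{N}[u]$ is the crux, and here I would use the Leibniz rule for $J$ recorded before \eqref{eq:Ju}: with $f=g=h=u$,
\[ J\mathcal{N}[u]=\halbe\big((Ju)\ol{u}\big)u-\halbe\big(u\,\ol{Ju}\big)u+\halbe(|u|^2)Ju, \]
so that $\p_xJ\mathcal{N}[u]$ is a sum of cubic expressions, each trilinear in $\{u,Ju\}$ and containing exactly one factor of $Ju$. Bounding $\|\p_xJ\mathcal{N}[u]\|_{H^{k-2}}\le\|J\mathcal{N}[u]\|_{H^{k-1}}$ by the same tame estimate --- measuring in $H^{k-1}$ the single factor that is allowed to carry the top derivative and placing the remaining two factors (or their product) in $H^1\hookrightarrow L^\infty$, again routing the $\cH$-factor through $H^1$ --- produces exactly the two weighted contributions $\|u\|_{H^1}\|Ju\|_{H^1}\|u\|_{H^{k-1}}$ and $\|u\|_{H^1}^2\|Ju\|_{H^{k-1}}$ on the right. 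I expect the main obstacle to be precisely this bookkeeping: in each trilinear term one must arrange that at most one factor sits in the top Sobolev norm while the other two land in $H^1$ (equivalently $L^\infty$), and one must handle the non-local factors $\cH(|u|^2)$ and $\cH((Ju)\ol{u})$ without ever applying $\cH$ on $L^\infty$. Once every term is matched to the stated combinations, the two inequalities follow.
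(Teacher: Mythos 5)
Your proposal is correct and follows exactly the paper's (one-line) argument: conjugation via $\p_t e^{-it\p_x^2}=e^{-it\p_x^2}(\p_t-i\p_x^2)$, substitution of the equations \eqref{kdnls-ve} and \eqref{eq:Ju}, and then Sobolev/product estimates, with the $J$-Leibniz rule and the $H^1\hookrightarrow L^\infty$ routing of the Hilbert-transform factors supplying precisely the details the paper leaves implicit. No substantive difference from the paper's proof.
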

\begin{proof}
	Since $\p_te^{-it\p_x^2}=e^{-it\p_x^2}(\p_t-i\p_x^2)$, the desired estimates follow from the equations \eqref{kdnls-ve}, \eqref{eq:Ju} and the Sobolev inequalities.
\end{proof}

\begin{lemma}\label{lem:int}
Let $k\geq 2$ be an integer.
We have
\begin{align}
&\begin{aligned}
&\Big\| \p_x^{k+1}\mathcal{N}[u] -\big( 2\alpha |u|^2+\beta \cH(|u|^2)\big) \p_x^{k+1}u-\beta \cH \big( \p_x^{k+1}u\, \ol{u}\big) u -\halbe \big(u\p_x^{k+1}\ol{u}\big) u\Big\|_{L^2}\\
&\quad \les \| u\|_{H^1}\| u\|_{H^2}\| u\|_{H^k},
\end{aligned} \label{lem:int1} \\
&\begin{aligned}
&\Big\| \p_x^kJ\mathcal{N}[u] -\big( 2\alpha |u|^2+\beta \cH(|u|^2)\big) \p_x^kJu-\beta \cH \big( \p_x^kJu\,\ol{u}\big) u +\halbe \big(u\p_x^{k}\ol{Ju}\big) u\Big\|_{L^2}\\
&\quad \les \| u\|_{H^1}\big( \| u\|_{H^2}\| Ju\|_{H^{k-1}}+\| Ju\|_{H^1}\| u\|_{H^k}\big) .
\end{aligned} \label{lem:int2}
\end{align}
\end{lemma}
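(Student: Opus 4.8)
The plan is to expand $\partial_x^{k+1}\mathcal N[u]$ by the Leibniz rule, peel off the genuinely top-order contributions (which must coincide with the three terms subtracted on the left-hand side of \eqref{lem:int1}), and estimate the remaining ``mixed'' terms in $L^2$ by a tame product estimate. Writing $\mathcal N[u]=\halbe(u\ol u)u$ as a trilinear expression in the factors $u,\ol u,u$, the Leibniz rule gives
\[
\partial_x^{k+1}\mathcal N[u]=\sum_{a+b+c=k+1}\frac{(k+1)!}{a!\,b!\,c!}\,\halbe\big((\partial_x^a u)(\partial_x^b\ol u)\big)\partial_x^c u .
\]
First I would isolate the terms in which one of $a,b,c$ equals $k+1$. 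Since $\halbe$ acts only on the product of the first two factors, the two contributions with all derivatives on a non-conjugated $u$ coincide and produce the factor $2\alpha|u|^2$, while the contribution on $\ol u$ and the one outside $\halbe$ reassemble into $\halbe(u\partial_x^{k+1}\ol u)u$ and $\big(2\alpha|u|^2+\beta\cH(|u|^2)\big)\partial_x^{k+1}u+\beta\cH(\partial_x^{k+1}u\,\ol u)u$. This combinatorial identity matches the subtracted terms exactly; the factor of $2$ and the asymmetric splitting between the $\alpha$- and $\beta$-parts are where the precise form of the statement originates, and deserve to be checked carefully.

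The core analytic step is to bound in $L^2$ the remainder $\sum_{a+b+c=k+1,\ a,b,c\le k}\frac{(k+1)!}{a!\,b!\,c!}\,\halbe\big((\partial_x^a u)(\partial_x^b\ol u)\big)\partial_x^c u$, which I would treat term by term. The key structural fact is that no single factor carries all $k+1$ derivatives, so in each term at least two factors are differentiated and the highest derivative count is at most $k$. I would place the factor with the most derivatives in $L^2$ (controlled by $\|u\|_{H^k}$, since its order is $\le k$) and the remaining two factors in $L^\infty$ via the one-dimensional embeddings $H^1\hookrightarrow L^\infty$ and $H^2\hookrightarrow W^{1,\infty}$. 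To convert the intermediate Sobolev exponents produced this way into the asserted product $\|u\|_{H^1}\|u\|_{H^2}\|u\|_{H^k}$, I would invoke the log-convexity of the Sobolev norms (interpolation/Gagliardo--Nirenberg) together with a majorization argument: the three orders always sum to $k+3=k+2+1$ and are majorized by $(k,2,1)$, which yields the claimed bound by Karamata's inequality. The nonlocality of $\cH$ requires care only in deciding where to put the highest-order factor: when it sits inside $\halbe$ one uses the $L^2$-boundedness of $\cH$, whereas when it sits outside one instead bounds $\halbe$ of the remaining product in $L^\infty$ through the boundedness of $\cH$ on $H^1$.

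Estimate \eqref{lem:int2} would follow the same scheme applied to $\partial_x^k J\mathcal N[u]$, now using the ``Leibniz rule'' for $J$ recorded before \eqref{eq:Ju}, namely $J[\halbe(f\ol g)h]=\halbe((Jf)\ol g)h-\halbe(f\ol{Jg})h+\halbe(f\ol g)Jh$, together with $\partial_x^k J=J\partial_x^k+k\partial_x^{k-1}$ to move the derivatives past $J$ (the commutator contributing only lower-order, hence harmless, terms). Distributing the single $J$ onto one factor and the $k$ derivatives among the three, the top-order terms are those carrying $\partial_x^k Ju$ or $\partial_x^k\ol{Ju}$; here the minus sign in front of $\halbe(u\partial_x^k\ol{Ju})u$ is inherited from the minus sign in the $J$-Leibniz rule when $J$ hits the conjugated factor. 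The remainder is estimated as before with $Ju$ treated as an additional factor: the two pieces of the right-hand side of \eqref{lem:int2} correspond respectively to the case where the surplus derivative falls on a plain $u$ (so $Ju$ is controlled in $H^{k-1}$ and a $u$ contributes $\|u\|_{H^2}$) and the case where the top derivatives fall on a plain $u$ (so $u$ is controlled in $H^k$ and $Ju$ contributes $\|Ju\|_{H^1}$).

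The main obstacle, in both parts, is the estimate of the remainder. The naive Hölder splitting that puts the top factor in $L^2$ and the other two in $L^\infty$ fails for the ``balanced'' frequency distributions (for instance $a=b=c$ with $k$ large), so the interpolation/majorization argument is essential; and the non-local operator $\cH$, being unbounded on $L^\infty$, forces one to keep the highest-order factor in an $L^2$- or $H^1$-slot where $\cH$ is bounded. Everything else is routine once the top-order cancellation has been correctly identified.
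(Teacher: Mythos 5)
Your argument is correct, and the top-order bookkeeping (the factor $2\alpha|u|^2$ from the two unconjugated slots, the asymmetric $\beta$-terms, and the sign flip on $\halbe(u\p_x^k\ol{Ju})u$ coming from the $J$-Leibniz rule on the conjugated factor) matches the statement exactly. The route differs from the paper's in the mechanics of the remainder estimate. The paper never writes out the Leibniz expansion in physical space: it passes directly to the Fourier side, observes that after removing the three top-order terms the surviving symbol is pointwise bounded by $|\xi_2||\xi_3|^k$ on the ordered region $|\xi_1|\le|\xi_2|\le|\xi_3|$ (i.e.\ at least one derivative is forced off the highest frequency), and concludes in one stroke by Young's inequality $L^1 * L^1 * L^2\to L^2$ together with $\|\wh{v}\|_{L^1}\les\|u\|_{H^1}$, $\||\xi|\wh{v}\|_{L^1}\les\|u\|_{H^2}$. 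This lands on the product $\|u\|_{H^1}\|u\|_{H^2}\|u\|_{H^k}$ directly, with no interpolation step, and makes the Hilbert transform invisible (it is just a bounded symbol absorbed into taking absolute values). Your physical-space version needs two extra ingredients that the Fourier version avoids: the log-convexity/Karamata majorization to convert the intermediate exponents $(a+1,b+1,c)$ into $(k,2,1)$, and the case distinction on where the top-order factor sits relative to $\halbe$ (using boundedness of $\cH$ on $L^2$ versus on $H^1$), since $\cH$ is not bounded on $L^\infty$. Both of these you identify and handle correctly, so the proof goes through; it is simply longer than the paper's, the price being paid for staying in physical space. The same comparison applies verbatim to \eqref{lem:int2}, where the paper's symbol bound $|\xi_2||\xi_3|^{k-1}+|\xi_2|^k$ on $|\xi_1|\le|\xi_2|$ encodes exactly your two cases for the two summands on the right-hand side.
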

\begin{proof}
Considering on the Fourier side, we see that 
\[ \text{L.H.S. of \eqref{lem:int1}}\les \Big\| \int_{\xi_1+\xi_2+\xi_3=\xi,\,|\xi_1|\leq |\xi_2|\leq |\xi_3|}|\xi_2||\xi_3|^k|\wh{v_1}(\xi_1)\wh{v_2}(\xi_2)\wh{v_3}(\xi_3)|\,d\xi_1\,d\xi_2 \Big\|_{L^2_\xi},\]
where each of $v_1,v_2,v_3$ is either $u$ or $\ol{u}$.
By Young's and Cauchy-Schwarz's inequalities, this is bounded by
\[ \| \wh{v_1}\|_{L^1_\xi}\| |\xi |\wh{v_2}\|_{L^1_\xi}\| |\xi |^k\wh{v_3}\|_{L^2_\xi}\les \| u\|_{H^1}\| u\|_{H^2}\| u\|_{H^k},\]
which implies \eqref{lem:int1}.
Similarly, writing $w$ to denote either $Ju$ or $\ol{Ju}$, we have
\begin{align*}
\text{L.H.S. of \eqref{lem:int2}}
&\les \Big\| \int_{\xi_1+\xi_2+\xi_3=\xi,\,|\xi_1|\leq |\xi_2|}\Big( |\xi_2||\xi_3|^{k-1}+|\xi_2|^k \Big) |\wh{v_1}(\xi_1)\wh{v_2}(\xi_2)\wh{w}(\xi_3)|\,d\xi_1\,d\xi_2 \Big\|_{L^2_\xi}\\
&\les \| u\|_{H^1}\| u\|_{H^2}\| Ju\|_{H^{k-1}}+\| u\|_{H^1}\| u\|_{H^k}\| Ju\|_{H^1},
\end{align*}
which implies \eqref{lem:int2}.
\end{proof}

\begin{lemma}[Brezis-Gallouet-Wainger inequality]\label{lem:BGW}
	Let $\nu\in (0,1)$.
	For $f\in H^1$, we have
	\[ \| f\|_{L^\infty}\les_\nu \| f\|_{H^{\frac12}}\log^{\frac12}\big( e+\| \p_xf\|_{L^2}\big) +\| f\|_{H^{\frac12}}^{1-\nu}.\]
\end{lemma}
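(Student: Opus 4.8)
The plan is to reduce everything to a one-dimensional frequency-splitting estimate for $\|\wh f\|_{L^1}$. By Fourier inversion and the triangle inequality we have $\|f\|_{L^\infty}\les \|\wh f\|_{L^1}$, so it suffices to bound $\int_\R |\wh f(\xi)|\,d\xi$. The key idea is that the logarithmic divergence of the weight $\bra{\xi}^{-1}$ (which is precisely the borderline failure of the embedding $H^{1/2}\hookrightarrow L^\infty$ in dimension one) is only felt up to a frequency threshold $R>e$ to be chosen, whereas beyond $R$ we can afford to spend a little extra regularity. Accordingly I would split $\int_\R|\wh f|\,d\xi = \int_{|\xi|\le R}|\wh f|\,d\xi + \int_{|\xi|>R}|\wh f|\,d\xi$.

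For the low-frequency part I would apply Cauchy--Schwarz against the weight $\bra{\xi}^{-1}$: since $\int_{|\xi|\le R}\bra{\xi}^{-1}\,d\xi \les \log(e+R)$, this yields $\int_{|\xi|\le R}|\wh f|\,d\xi\les \log^{\frac12}(e+R)\,\|f\|_{H^{1/2}}$, which is the source of the main term. For the high-frequency part the decisive move is to use a slightly stronger weight $\bra{\xi}^{-1-\nu}$, whose tail integral satisfies $\int_{|\xi|>R}\bra{\xi}^{-1-\nu}\,d\xi \les_\nu R^{-\nu}$, and to pay for the missing regularity by interpolation: Cauchy--Schwarz gives $\int_{|\xi|>R}|\wh f|\,d\xi\les_\nu R^{-\nu/2}\|f\|_{H^{(1+\nu)/2}}$, and the interpolation inequality $\|f\|_{H^{(1+\nu)/2}}\le \|f\|_{H^{1/2}}^{1-\nu}\|f\|_{H^1}^{\nu}$ (H\"older on the Fourier side with exponents $\frac{1}{1-\nu},\frac1\nu$) turns this into $\les_\nu R^{-\nu/2}\|f\|_{H^{1/2}}^{1-\nu}\|f\|_{H^1}^{\nu}$.

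Finally I would choose the threshold $R = e + \|\p_x f\|_{L^2}^2$. Then $\log^{\frac12}(e+R)\les \log^{\frac12}(e+\|\p_x f\|_{L^2})$ produces exactly the claimed main term $\|f\|_{H^{1/2}}\log^{\frac12}(e+\|\p_x f\|_{L^2})$; for the high-frequency contribution I would write $\|f\|_{H^1}^{\nu}\les \|f\|_{H^{1/2}}^{\nu}+\|\p_x f\|_{L^2}^{\nu}$, which splits it into $R^{-\nu/2}\|f\|_{H^{1/2}}$ (bounded by $\|f\|_{H^{1/2}}$ since $R\ge e$, hence absorbed into the main term) and $R^{-\nu/2}\|\p_x f\|_{L^2}^{\nu}\|f\|_{H^{1/2}}^{1-\nu}$ (bounded by $\|f\|_{H^{1/2}}^{1-\nu}$ since $R^{-\nu/2}\|\p_x f\|_{L^2}^{\nu}\le 1$). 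The main obstacle is precisely this last bookkeeping: a crude high-frequency estimate using only the weight $\bra{\xi}^{-2}$ and the full $H^1$ norm would leave a term of size $R^{-1/2}\|\p_x f\|_{L^2}=O(1)$ that does not vanish as $f\to 0$ and so cannot be dominated by $\|f\|_{H^{1/2}}^{1-\nu}$. Spending the small amount $\nu$ of regularity through interpolation is exactly what manufactures the factor $\|f\|_{H^{1/2}}^{1-\nu}$ and reconciles the bound with the precise form of the stated inequality; it is also the source of the $\nu$-dependence of the implicit constant.
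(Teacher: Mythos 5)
Your proof is correct. It follows the same overall skeleton as the paper's argument (bound $\|f\|_{L^\infty}$ by $\|\wh f\|_{L^1}$, split at a frequency threshold $R$, Cauchy--Schwarz on each piece), but the two proofs diverge in where the parameter $\nu$ enters and how the correction term $\|f\|_{H^{1/2}}^{1-\nu}$ is produced. The paper keeps the crude high-frequency bound $R^{-1/2}\|\xi\wh f\|_{L^2(|\xi|>R)}\le R^{-1/2}\|\p_xf\|_{L^2}$ and instead optimizes the threshold, taking $R=\|\p_xf\|_{L^2}^2/\|f\|_{H^{1/2}}^2$ so that the high-frequency term collapses exactly to $\|f\|_{H^{1/2}}$; the price is that the low-frequency logarithm becomes $\log^{1/2}\bigl(1+\|\p_xf\|_{L^2}^2/\|f\|_{H^{1/2}}^2\bigr)$, and the extra piece $\|f\|_{H^{1/2}}\log^{1/2}\bigl(1+\|f\|_{H^{1/2}}^{-1}\bigr)$ is then absorbed into $\|f\|_{H^{1/2}}^{1-\nu}$ via $s\log^{1/2}(1+1/s)\les_\nu s^{1-\nu}$. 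You instead fix $R=e+\|\p_xf\|_{L^2}^2$ (depending only on $\|\p_xf\|_{L^2}$, which makes the logarithmic main term immediate) and spend the $\nu$ on the high-frequency side, through the weight $\bra{\xi}^{-1-\nu}$ and the interpolation $\|f\|_{H^{(1+\nu)/2}}\le\|f\|_{H^{1/2}}^{1-\nu}\|f\|_{H^1}^{\nu}$; your closing observation that the crude weight $\bra{\xi}^{-2}$ would leave an $O(1)$ remainder is accurate \emph{for your choice of $R$}, and is precisely the obstruction the paper circumvents by its $f$-dependent threshold. Both routes are valid and of comparable length; yours avoids the slightly delicate elementary inequality for $s\log^{1/2}(1+1/s)$ at the cost of an extra interpolation step.
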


\begin{proof}
	Applying Cauchy-Schwarz on the Fourier side, we have
	\[ \| f\|_{L^\infty}\les \| \wh{f}\|_{L^1}\les \big( \log^{\frac12} (1+R)\big) \| \bra{\xi}^{\frac12}\wh{f}\|_{L^2(|\xi|\leq R)}+R^{-\frac12}\| \xi \wh{f}\|_{L^2(|\xi|>R)}\]
	for any $R>0$.
	Setting $R:=\| \p_xf\|_{L^2}^2/\| f\|_{H^{\frac12}}^2$, we have
	\begin{align*}
		\| f\|_{L^\infty}&\les \Big\{ 1+\log^{\frac12}\Big( 1+\frac{\| \p_xf\|_{L^2}^2}{\| f\|_{H^{\frac12}}^2}\Big) \Big\} \| f\|_{H^{\frac12}}\\
		&\les \Big\{ \log^{\frac12}\big( e+\| \p_xf\|_{L^2}\big) +\log^{\frac12} \Big( 1+\frac{1}{\| f\|_{H^{\frac12}}}\Big) \Big\} \| f\|_{H^{\frac12}} \\
		&\les _\nu \| f\|_{H^{\frac12}}\log^{\frac12}\big( e+\| \p_xf\|_{L^2}\big) +\| f\|_{H^{\frac12}}^{1-\nu}.
	\end{align*}
\end{proof}


\section{A priori bounds}
\label{sec:apriori}

In this section, we establish various a priori estimates for smooth solutions $u^\ve$ of \eqref{kdnls-ve} that will be used in the proof of Theorem~\ref{thm:lwp}.
In particular, we need a priori bounds in higher regularities to prove persistence of regularity property.
Our proof is based on the energy estimates.
Throughout this section, we fix $\ve\in(0,1)$ and drop the superscript $\ve$.

\subsection{A priori bounds in $H^2$, $H^k$}

\begin{prop}[$H^2$ a priori bound]\label{prop:H2apriori}
There exist a continuous increasing function $C_0:\R_+\to [1,\infty)$ and a continuous decreasing function $T_0: \R_+\to (0,1]$ such that the following holds:

Let $\ve\in(0,1)$, $T>0$, and suppose that $u$ is a smooth solution to \eqref{kdnls-ve} on $[0,T]$ (say, $u\in C([0,T],H^{7})$) with an initial data $\phi$.
Then, we have
\begin{gather}
\| u(t)\|_{H^2}\leq C_0(\| \phi\|_{H^1})\| \phi\|_{H^2} \qquad \text{for}\quad 0\leq t\leq \min \{ T, T_0(\| \phi\|_{H^2})\} . \label{apriori-lwp1}
\end{gather}
\end{prop}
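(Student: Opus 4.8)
The plan is to run a classical energy estimate at the $H^2$ level for the regularized equation \eqref{kdnls-ve}, treating the regularization parameter $\ve$ as a purely technical device: it guarantees the smoothness needed to justify the computation, but it must never enter the final bound quantitatively, so that the estimate is uniform in $\ve$. In the energy identity for $\|\p_x^2u\|_{L^2}^2$ the dispersive operator $i\p_x^2$ is skew-adjoint and drops out, while the dissipative term contributes $-2\ve\|\p_x^3u\|_{L^2}^2\le0$, which I simply discard. Thus everything reduces to controlling $\mathrm{Re}\int\p_x^3\mathcal{N}[u]\,\ol{\p_x^2u}\,dx$. I first apply Lemma~\ref{lem:int} with $k=2$ to peel off a remainder bounded by $\|u\|_{H^1}\|u\|_{H^2}^2$, leaving the three main terms $(2\al|u|^2+\beta\cH(|u|^2))\p_x^3u$, $\beta\cH(\p_x^3u\,\ol u)u$, and $\halbe(u\p_x^3\ol u)u$.

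Two of these three are disposed of by integration by parts. For the first the coefficient $2\al|u|^2+\beta\cH(|u|^2)$ is real, so $2\mathrm{Re}(\p_x^3u\,\ol{\p_x^2u})=\p_x|\p_x^2u|^2$ is an exact derivative and one integration by parts leaves only $\|\p_x(2\al|u|^2+\beta\cH(|u|^2))\|_{L^\infty}\|\p_x^2u\|_{L^2}^2\les\|u\|_{H^1}\|u\|_{H^2}^3$ (by Sobolev embedding). The term $\halbe(u\p_x^3\ol u)u$ is also harmless: in the relevant regime its high derivative sits on the conjugated factor, on which $\cH$ acts as a constant $\mp i$, so the contribution reduces to $\int u^2\,\ol{\p_x^3u}\,\ol{\p_x^2u}=\tfrac12\int u^2\p_x\big((\ol{\p_x^2u})^2\big)$, again an exact derivative, giving a bound $\les\|u\|_{H^1}\|u\|_{H^2}^3$ after integrating by parts. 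The only resistant term is $\beta\cH(\p_x^3u\,\ol u)u$: here $\cH$ produces a factor $\mp i$ on the high-frequency part, so pairing with $\ol{\p_x^2u}$ yields $2\beta\,\mathrm{Im}\int|u|^2\p_x^3u\,\ol{\p_x^2u}$, whose integrand's imaginary part is not an exact derivative and, for $\beta>0$, carries no favorable sign. This is the crux of the proof.

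To remove it I perform the top-order estimate not on $\p_x^2u$ but on the gauged, frequency-localized quantities $V_\pm:=e^{\rho^\pm[u]}\p_x^2Q_\pm u$, writing $\p_x^2u=\p_x^2P_{\leq1}u+\p_x^2Q_+u+\p_x^2Q_-u$ and noting $\|\p_x^2P_{\leq1}u\|_{L^2}\les\|u\|_{L^2}$ is negligible. Since $Q_\pm\cH=\mp iQ_\pm$, the problematic term localized by $Q_\pm$ equals $\mp i\beta|u|^2\p_x(\p_x^2Q_\pm u)$ up to commutators, which are bounded through Lemma~\ref{lem:commutator}. The gauge \eqref{def:gauge} is designed precisely so that $\p_x\rho^\pm[u]=\mp\tfrac\beta2|u|^2$: when $-i\p_x^2$ falls on the factor $e^{\rho^\pm[u]}$ it generates the first-order term $-2i\,\p_x\rho^\pm[u]\,\p_x(\p_x^2Q_\pm u)=\pm i\beta|u|^2\p_x(\p_x^2Q_\pm u)$, which exactly cancels the problematic contribution. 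After this cancellation the $V_\pm$-energy has no net first-order term beyond the real-coefficient transport, and the leftover zeroth-order gauge contributions are estimated via Lemma~\ref{lem:rho}; since $e^{\pm\rho^\pm[u]}$ are bounded above and below by $e^{C\|u\|_{L^2}^2}$, the gauged energy $\sum_\pm\|V_\pm\|_{L^2}^2$ is comparable to $\|\p_x^2u\|_{L^2}^2$ with a constant depending only on $\|u\|_{L^2}\le\|u\|_{H^1}$.

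Finally I combine the $L^2$ identity (whose nonlinear right-hand side $\beta\|D_x^{1/2}(|u|^2)\|_{L^2}^2\les\|u\|_{H^1}^4$ is controlled by Sobolev embedding), a parallel $\dot H^1$ estimate handled by the same integration-by-parts and gauge mechanism one order lower, and the gauged $\dot H^2$ estimate into a differential inequality of the schematic form $\tfrac{d}{dt}\mathcal{E}\les_{\|u\|_{H^1}}\mathcal{E}^{3/2}$ for an energy $\mathcal{E}(t)$ equivalent to $\|u(t)\|_{H^2}^2$. I then close by a continuity argument: assuming provisionally $\|u\|_{H^1}\le2\|\phi\|_{H^1}$ and $\|u\|_{H^2}\le2C_0\|\phi\|_{H^2}$ on a maximal subinterval, both bounds are strictly improved once $t\le T_0$ for a sufficiently small $T_0$ depending on $\|\phi\|_{H^2}$; the multiplicative constant $C_0$ depends only on $\|\phi\|_{H^1}$ because it originates solely in the comparison factor $e^{C\|\phi\|_{L^2}^2}$ between the gauged and ungauged energies. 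The principal obstacle throughout is the sign-indefinite resonance $\beta\cH(\p_x^3u\,\ol u)u$ for $\beta>0$; once it is neutralized by the sign-dependent gauges $e^{\rho^\pm[u]}$ applied separately to $Q_\pm u$, only routine multilinear bookkeeping remains.
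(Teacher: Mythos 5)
Your overall architecture (gauged energies $e^{\rho^\pm[u]}Q_\pm\p_x^2u$, cancellation of the resonant term $\beta\cH(\ol u\,\p_x^3u)u$ through $\p_x\rho^\pm[u]=\mp\tfrac{\beta}{2}|u|^2$, commutator lemma for the real-coefficient transport term, closure via a Gronwall-type argument) matches the paper. But there is a genuine gap in your treatment of the third main term, $\halbe(u\,\p_x^3\ol u)u$, where the top derivative falls on the conjugated factor. You dismiss it as an exact derivative, writing $\int u^2\,\ol{\p_x^3u}\,\ol{\p_x^2u}=\tfrac12\int u^2\p_x\big((\ol{\p_x^2u})^2\big)$. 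That identity requires the two high-frequency factors to be derivatives of the \emph{same} function, and this is exactly what the gauge and the projections destroy: in the energy for $e^{\rho^\pm[u]}Q_\pm\p_x^2u$, the output frequency $\xi$ is (say) large positive, the frequency conservation $\xi=(\xi-\eta)+(\xi-\sigma)-(\xi-\eta-\sigma)$ with two low factors forces $\xi-\eta-\sigma\approx-\xi$, so the factor carrying $\p_x^3$ is the conjugate of the \emph{negative}-frequency part of $u$, while the factor you pair against is $\ol{Q_+\p_x^2u}$, built from the \emph{positive}-frequency part. These are independent pieces of $u$; the self-referential structure $f'f=\tfrac12(f^2)'$ is gone, a single integration by parts merely shuffles the third derivative onto the other high factor, and the weights $e^{2\rho^+}\neq e^{2\rho^-}$ (together with the $\mathrm{sgn}$ multiplier from $\cH$) prevent restoring the symmetry by summing over $\pm$. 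What remains is an interaction placing $|\xi|^3$ on one $H^2$ function and $|\xi|^2$ on another with no sign, no symmetry, and no commutator gain — it cannot be bounded by $C(\|u\|_{H^2})\|u\|_{H^2}^2$ by any fixed-time estimate.

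The paper's point is that this term is saved not by integration by parts in $x$ but by integration by parts in \emph{time}: in the dangerous regime the modulation $2\eta\sigma\approx2\xi^2$ is large (the interaction is non-resonant even though it loses a derivative), so one writes $\eta\sigma e^{2it\eta\sigma}=(2i)^{-1}\tfrac{d}{dt}e^{2it\eta\sigma}$ after the algebraic decomposition \eqref{xi-decomposition}, producing a boundary correction $\cI_2(t)$ that must be absorbed into a modified energy $\wt X(t)$, plus terms with $\p_t$ falling on $f=e^{-it\p_x^2}u$ and on $h$, which in turn require Lemma~\ref{lem:dt-bound} and the absorption of $\ve\|\p_x^3u\|_{L^2}$ into the dissipative term $-2\ve\|e^{\rho^\pm[u]}Q_\pm\p_x^3u\|_{L^2}^2$. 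This normal-form/modified-energy step is a substantial and unavoidable component of the proof, and it is entirely absent from your proposal; without it the differential inequality $\tfrac{d}{dt}\mathcal{E}\les_{\|u\|_{H^1}}\mathcal{E}^{3/2}$ does not close.
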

Note that all calculations in the following proof are justified for smooth solutions to \eqref{kdnls-ve}.
\begin{proof}
Using the equation \eqref{kdnls-ve} and interpolation inequalities, we see that
\begin{gather}\label{est:dtu0}
\frac{d}{dt}\| u(t)\|_{L^2}^2=-2\ve \| \p_xu(t)\|_{L^2}^2+2 {\rm Re} \int \beta \p_x\big[\cH (|u|^2)u\big]\,\ol{u}\,dx \les \| u(t)\|_{L^2}^2\| \p_xu(t)\|_{L^2}^2,
\end{gather}
and similarly,
\begin{gather}\label{est:dtu1}
\frac{d}{dt}\| \p_xu(t)\|_{L^2}^2\les \| u(t)\|_{L^2}^2\| \p_x^2u(t)\|_{L^2}^2.
\end{gather}
Let us move on to the second order case $\frac{d}{dt}\|\p_x^2u(t)\|_{L^2}^2$. 
Since the same type of estimate as \eqref{est:dtu1} gives a third order derivative which is not bounded by $\| u(t)\|_{H^2}$, we introduce the gauge function $e^{\rho^\pm[u]}$ defined in \eqref{def:gauge}.
By the relation 
\[ \p_x^2u=P_{\leq 1}\p_x^2u+e^{\rho^-[u]}e^{\rho^+[u]}Q_+\p_x^2u+e^{\rho^+[u]}e^{\rho^-[u]}Q_-\p_x^2u \]
and Lemma~\ref{lem:rho}, we have%
\footnote{From now on, the $\pm$ signs will occasionally be used to indicate the sum of these terms.}%
\begin{gather*}
\begin{aligned}
\| \p_x^2u \|_{L^2} &\leq \| P_{\leq 1}\p_x^2u \|_{L^2}+\| e^{\rho^\mp[u]}\|_{L^\infty}\| e^{\rho^\pm[u]}Q_\pm \p_x^2u\|_{L^2} \\
&\les e^{C\|u \|_{L^2}^2}\Big( \| e^{\rho^\pm[u]}Q_\pm \p_x^2u\|_{L^2} +\| u\|_{L^2}\Big) .
\end{aligned}
\end{gather*}
Instead of $\frac{d}{dt}\| \p_x^2u(t)\|_{L^2}^2$, we estimate the $t$-derivative of the right-hand side squared, which exceeds $\| u(t)\|_{H^2}^2$.
In practice, we define
\[ X(t):= C_1e^{C_1\|u(t)\|_{L^2}^2}\Big( \| e^{\rho^\pm[u(t)]}Q_\pm \p_x^2u(t)\|_{L^2}^2+\| u(t)\|_{L^2}^2 +C_1e^{C_1\|u(t)\|_{L^2}^2}\| u(t)\|_{H^1}^6\Big) \]
for some large $C_1>0$ so that $\| u(t)\|_{H^2}^2\leq \frac12X(t)$, and estimate $\frac{d}{dt}X(t)$.
Note that $X(t)\leq C_2(\| u(t)\|_{H^1}^2)\| u(t)\|_{H^2}^2$ for some smooth, continuous function $C_2:\R_+\to [1,\infty)$.

Then it remains to estimate $\frac{d}{dt}\| e^{\rho^\pm[u(t)]}Q_\pm \p_x^2u(t)\|_{L^2}^2$, and we will show that it is bounded by $C_3(\| u(t)\|_{H^2}^2)\| u(t)\|_{H^2}^2$ for some function $C_3(\cdot)$ after some modification (see \eqref{est:H2energy-X} below).
By the equation \eqref{kdnls-ve} and integration by parts, we have
\begin{align}
&\frac{d}{dt}\| e^{\rho^\pm[u(t)]}Q_\pm \p_x^2u(t)\|_{L^2}^2 \notag \\
&\quad =\int_{\R}\p_t\big( e^{2\rho^\pm[u]}\big) |Q_\pm \p_x^2u|^2\,dx +2{\rm Re}\int_{\R} e^{2\rho^\pm[u]}Q_\pm\p_x^2(\p_t u)\ol{Q_\pm \p_x^2u}\,dx \notag \\
&\quad =\int_{\R}\p_t\big( e^{2\rho^\pm[u]}\big) |Q_\pm \p_x^2u|^2\,dx -2{\rm Re}\,(\mp i\beta) \int_{\R} e^{2\rho^\pm[u]}|u|^2 Q_\pm\p_x^3u\ol{Q_\pm \p_x^2u}\,dx \notag \\
&\qquad +\ve \int \p_x^2\big( e^{2\rho^\pm[u]}\big) |Q_\pm \p_x^2 u|^2\,dx -2\ve \| e^{\rho^\pm[u]}Q_\pm \p_x^3u\|_{L^2}^2 +2{\rm Re}\int_{\R} e^{2\rho^\pm[u]}Q_\pm\p_x^3\mathcal{N}[u]\ol{Q_\pm \p_x^2u}\,dx \notag \\
&\quad \begin{aligned}
&\leq \Big( \| \p_te^{2\rho^\pm[u]}\|_{L^\infty}+\| \p_x^2e^{2\rho^\pm[u]}\|_{L^\infty}\Big) \| \p_x^2 u(t)\|_{L^2}^2-2\ve \| e^{\rho^\pm[u]}Q_\pm \p_x^3u\|_{L^2}^2 \\
&\quad +2{\rm Re}\int_{\R} e^{2\rho^\pm[u]}\Big( Q_\pm \p_x^3\mathcal{N}[u] -(\mp i\beta) |u|^2Q_\pm \p_x^3u\Big) \ol{Q_\pm \p_x^2u}\,dx.
\end{aligned} \label{eq:dt-vpm}
\end{align}
The first term of \eqref{eq:dt-vpm} can be estimated properly by Lemma~\ref{lem:rho}.
To treat the second line, we decompose the cubic terms as
\begin{align}
&Q_\pm \p_x^3\mathcal{N}[u] -(\mp i\beta) |u|^2Q_\pm \p_x^3u \notag \\
&\quad =Q_\pm \Big( 2\alpha |u|^2\p_x^3u+\beta \cH(|u|^2) \p_x^3u \Big) \label{H2-1}\\
&\qquad +\beta \Big( Q_\pm \big[ u\cH(\ol{u}\p_x^3u)\big] -u\ol{u}Q_\pm \cH \p_x^3u\Big) \label{H2-2}\\
&\qquad +Q_\pm \Big( \halbe\big( u\p_x^3\ol{u}\big)u\Big) \label{H2-3}\\
&\qquad + \text{(l.o.t.)} , \notag
\end{align}
where the lower order terms are treated by Lemma~\ref{lem:int} with $k=2$:
\[ \Big| 2{\rm Re}\int_{\R} e^{2\rho^\pm[u]}\text{(l.o.t.)}\ol{Q_\pm \p_x^2u}\,dx \Big| \les e^{C\| u\|_{L^2}^2}\| u\|_{H^2}^4. \]

For \eqref{H2-1}, we further decompose it and use integration by parts:
\begin{align*}
2{\rm Re}\int_{\R} e^{2\rho^\pm[u]}\eqref{H2-1} \ol{Q_\pm \p_x^2u}\,dx
&=2{\rm Re}\int_{\R} e^{2\rho^\pm[u]}\big[ Q_\pm ,2\alpha |u|^2+\beta \cH(|u|^2)\big] \p_x^3u\cdot \ol{Q_\pm \p_x^2u}\,dx \\
&\quad -\int_{\R}\p_x\Big[  e^{2\rho^\pm[u]}\big( 2\alpha |u|^2+\beta \cH(|u|^2)\big) \Big] |Q_\pm \p_x^2u|^2\,dx.
\end{align*}
The first commutator term is handled by Lemma \ref{lem:commutator}:
\[ \big\| \big[ Q_\pm, 2\alpha |u|^2+\beta \cH(|u|^2)\big] \p_x^3 u\big\|_{L^2}\les \| 2\alpha |u|^2+\beta \cH(|u|^2)\|_{H^2}\| u\|_{H^2}\les \| u\|_{H^2}^3.\]
The second term is easily treated.

For \eqref{H2-2}, it suffices to show%
\footnote{Here, we use the inhomogeneous decomposition.
Hence, we replace $P_N$ with $P_{\leq 1}$ for $N=1$.}%
\[ \sum_{N_0,N_1,N_2,N_3\geq 1}\Big\| P_{N_0}\Big[ Q_{\pm}\big( P_{N_1}u\cH(\ol{P_{N_2}u}\p_x^3P_{N_3}u)\big) -P_{N_1}u\ol{P_{N_2}u}Q_{\pm}\cH\p_x^3P_{N_3}u\Big] \Big\|_{L^2}\les \| u\|_{H^2}^3.\]
Note that the high-low interaction including problematic resonance (i.e., the case $N_3\gg N_1\vee N_2$) does not occur due to the cancellation effect.
Then, we may assume $N_3\les N_1\vee N_2\sim N_{\max}:=\max \{ N_0,N_1,N_2,N_3\}$, and we have
\begin{align*}
&\Big\| P_{N_0}\Big[ Q_{\pm}\big( P_{N_1}u\cH(\ol{P_{N_2}u}\p_x^3P_{N_3}u)\big) -P_{N_1}u\ol{P_{N_2}u}Q_{\pm}\cH\p_x^3P_{N_3}u\Big] \Big\|_{L^2}\\
&\quad \les \| \wh{P_{N_1}u}\|_{L^1}\| \wh{P_{N_2}u}\|_{L^1}\cdot N_3^3\| \wh{P_{N_3}u}\|_{L^2}\\
&\quad \les N_1^{\frac12}N_2^{\frac12}(N_1\vee N_2)N_3^{2}\| P_{N_1}u\|_{L^2}\| P_{N_2}u\|_{L^2}\| P_{N_3}u\|_{L^2}\\
&\quad \les N_{\max}^{-\frac12}N_1^2N_2^2N_3^2\| P_{N_1}u\|_{L^2}\| P_{N_2}u\|_{L^2}\| P_{N_3}u\|_{L^2},
\end{align*}
which is summable.

It remains to consider the contribution from \eqref{H2-3}.
We set $f(t):=e^{-it\p_x^2}u(t)$ and $h(t):=e^{-it\p_x^2}\big[ e^{2\rho^\pm[u(t)]}Q_\pm \p_x^2u(t)\big]$, then
\begin{align}
\begin{aligned}\label{eq:esti-r1}
&\int_{\R} e^{2\rho^\pm[u]}\eqref{H2-3} \ol{Q_\pm \p_x^2u}\,dx \\
&\quad =\int_{\R^3} e^{2it\eta\sigma}m(\xi,\sigma)(\xi-\eta-\sigma)^3 \wh{f}(\xi-\eta)\ol{\wh{f}(\xi-\eta-\sigma)}\wh{f}(\xi-\sigma)\ol{\wh{h}(\xi)}\,d\eta\, d\sigma\, d\xi ,
\end{aligned}
\end{align}
where
\[ m(\xi,\sigma):=c\,(\chi_\pm \varrho_{>1})(\xi)(\alpha -i\beta \sgn{\sigma}) .\]
We decompose the multiplier $(\xi-\eta-\sigma)^3$ using the relation 
\begin{equation}\label{xi-decomposition}
(\xi-\eta-\sigma)^2=(\xi-\eta-\sigma)(\xi-\eta)+(\xi-\eta-\sigma)(\xi-\sigma)-(\xi-\eta)(\xi-\sigma)+\eta\sigma .
\end{equation}
By Young's and H\"older's inequalities and Lemma~\ref{lem:rho}, the contribution from the first three terms is bounded by $e^{C\| u\|_{L^2}^2}\| u\|_{H^2}^4$.
Regarding the last case, by $\eta\sigma e^{2it\eta \sigma} =(2i)^{-1} \frac{d}{dt} e^{2it\eta \sigma} $, we apply integration by parts in $t$ to further divide into the following terms:
\begin{align}
&\frac{d}{dt}\int_{\R^3} e^{2it\eta\sigma}m(\xi,\sigma) (2i)^{-1}(\xi-\eta-\sigma) \wh{f}(\xi-\eta)\ol{\wh{f}(\xi-\eta-\sigma)}\wh{f}(\xi-\sigma)\ol{\wh{h}(\xi)}\,d\eta\, d\sigma\, d\xi,\label{eq:esti-r1-1}\\
&\int_{\R^3} e^{2it\eta\sigma}m(\xi,\sigma) (2i)^{-1} (\xi-\eta-\sigma) \p_t\Big[ \wh{f}(\xi-\eta)\ol{\wh{f}(\xi-\eta-\sigma)}\wh{f}(\xi-\sigma)\Big] \ol{\wh{h}(\xi)}\,d\eta\, d\sigma\, d\xi, \label{eq:esti-r1-2}\\
&\int_{\R^3} e^{2it\eta\sigma}m(\xi,\sigma) (2i)^{-1}(\xi-\eta-\sigma) \wh{f}(\xi-\eta)\ol{\wh{f}(\xi-\eta-\sigma)}\wh{f}(\xi-\sigma)\ol{\p_t\wh{h}(\xi)}\,d\eta\, d\sigma\, d\xi . \label{eq:esti-r1-3}
\end{align}
Among the contributions \eqref{eq:esti-r1-1}--\eqref{eq:esti-r1-3}, we put \eqref{eq:esti-r1-1} on hold for now and consider \eqref{eq:esti-r1-2} and \eqref{eq:esti-r1-3}. 
Using Lemmas~\ref{lem:rho} and \ref{lem:dt-bound}, \eqref{eq:esti-r1-2} is estimated by
\[ \| f\|_{H^1}^2\| \p_tf\|_{H^1}\| h\|_{L^2}\les e^{C\| u\|_{L^2}^2} \Big( \ve \| \p_x^3u\|_{L^2}\| u\|_{H^2}^3+\| u\|_{H^2}^4+\| u\|_{H^2}^6\Big) .\]
The term including the third-order derivative can be handled by the factor $-2\ve\| e^{\rho^\pm[u]}Q_\pm \p_x^3u\|_{L^2}^2$ in \eqref{eq:dt-vpm} from the fact that
\[ \| \p_x^3u\|_{L^2}\les \| u\|_{L^2}+e^{C\| u\|_{L^2}^2}\| e^{\rho^\pm[u]}Q_\pm \p_x^3u\|_{L^2}. \]
By these estimates, we prove the bound of \eqref{eq:esti-r1-2}:
\[ -\ve\| e^{\rho^\pm[u]}Q_\pm \p_x^3u\|_{L^2}^2+|\eqref{eq:esti-r1-2}|\les e^{C\| u\|_{L^2}^2}\Big( \| u\|_{H^2}^4+\| u\|_{H^2}^6\Big) .\]

For \eqref{eq:esti-r1-3}, we bound this term by
\[  \left\|  Q_{\pm} \big( \alpha u^2\p_x\ol{u}+\beta \cH(u\p_x\ol{u})u\big) \right\|_{H^1}\| \p_t h\|_{H^{-1}}. \]
On  one hand, we have
\[ \left\|Q_{\pm} \big( \alpha u^2\p_x\ol{u}+\beta \cH(u\p_x\ol{u})u\big)\right\|_{H^1}\les \| u\|_{H^2}^3.\]
On the other hand, we see that
\begin{align*}
	e^{it\p_x^2}\p_t  h&=(\p_t-i\p_x^2)(e^{2\rho^\pm[u]}Q_\pm \p_x^2 u) \\
	&=\big[ (\p_t-i\p_x^2)e^{2\rho^\pm[u]}\big] Q_\pm \p_x^2 u -2i\big[ \p_xe^{2\rho^\pm[u]}\big] Q_\pm \p_x^3 u +e^{2\rho^\pm[u]}Q_\pm \p_x^2 (\p_t -i\p_x^2) u.
\end{align*}
Using the estimate $\| \varphi \psi\|_{H^{-1}}\les \| \varphi\|_{W^{1,\infty}}\| \psi\|_{H^{-1}}$,%
\footnote{This is obtained as the dual estimate of $\| \varphi \psi\|_{H^1}\les \| \varphi\|_{W^{1,\infty}}\| \psi\|_{H^1}$.}
we have
\begin{align*}
	\| \p_t h\|_{H^{-1}}&\les \| (\p_t-i\p_x^2)e^{2\rho^\pm[u]}\|_{L^\infty}\| \p_x^2 u\|_{L^2}+\| \p_xe^{2\rho^\pm[u]}\|_{W^{1,\infty}}\| \p_x^3 u\|_{H^{-1}}\\
	&\quad +\| e^{2\rho^\pm[u]}\|_{W^{1,\infty}} \| \p_x^2 (\p_t-i\p_x^2)u \|_{H^{-1}}.
\end{align*}
From Lemmas~\ref{lem:rho} and \ref{lem:dt-bound}, we easily see that
\[ \| \p_t h\|_{H^{-1}}\les e^{C\| u\|_{L^2}^2}\Big\{ \ve \| \p_x^3u\|_{L^2}\big( 1+\| u\|_{H^2}^2\big) +\| u\|_{H^2}+\| u\|_{H^2}^5\Big\} .\]
Hence, the term \eqref{eq:esti-r1-3} is bounded as
\[ |\eqref{eq:esti-r1-3}|\les e^{C\| u\|_{L^2}^2}\Big\{ \ve \| \p_x^3u\|_{L^2}\big( \| u\|_{H^2}^3+\| u\|_{H^2}^5\big) +\| u\|_{H^2}^4+\| u\|_{H^2}^8\Big\} .\]
Note that we can control the third order term by using $-2\ve\| e^{\rho^\pm[u]}Q_\pm \p_x^3u\|_{L^2}^2$ in \eqref{eq:dt-vpm} as before.

Therefore, we obtain
\begin{align}\label{est:H2energy-X}
&\frac{d}{dt}\Big( \| e^{\rho^\pm[u(t)]}Q_\pm \p_x^2u(t)\|_{L^2}^2-\cI_2(t)   \Big) \leq C_3 (\| u(t)\|_{H^2}^2)\| u(t)\|_{H^2}^2
\end{align}
for some $C_3:\R_+\to [1,\infty)$, where
\begin{align*}
\cI_2(t):= 2{\rm Re} \int_{\R^3} e^{2it\eta\sigma}m(\xi,\sigma)(2i)^{-1}(\xi-\eta-\sigma) \wh{f}(t,\xi-\eta)\ol{\wh{f}(t,\xi-\eta-\sigma)}\wh{f}(t,\xi-\sigma)\ol{\wh{h}(t,\xi)}\,d\eta\, d\sigma\, d\xi .
\end{align*}
We observe that
\begin{align*}
|\cI_2(t)|&\sim \Big| 2{\rm Re} \int e^{\rho^\pm[u]}Q_{\pm} \big( \halbe (u\p_x\ol{u})u\big) \ol{e^{\rho^\pm[u]}Q_\pm \p_x^2u}\,dx\Big| \\
&\leq Ce^{C\| u\|_{L^2}^2}\| u\|_{H^1}^3\| e^{\rho^\pm[u]}Q_\pm \p_x^2u\|_{L^2}\leq \frac12 \| e^{\rho^\pm[u]}Q_\pm \p_x^2u\|_{L^2}^2+Ce^{C\| u\|_{L^2}^2}\| u\|_{H^1}^6.
\end{align*}
Then, for $C_1>0$ large enough, the functional 
\begin{align*}
\wt{X}(t)&:=X(t) - C_1 e^{C_1\| u(t)\|_{L^2}^2}\mathcal{I}_2(t)\\
&\;=C_1 e^{C_1\| u(t)\|_{L^2}^2}\Big( \| e^{\rho^\pm[u(t)]}Q_\pm \p_x^2u(t)\|_{L^2}^2-\cI_2(t) + \| u(t)\|_{L^2}^2+C_1 e^{C_1\| u(t)\|_{L^2}^2}\| u(t)\|_{H^1}^6\Big) 
\end{align*}
satisfies 
\[ \frac12X(t)\leq \wt{X}(t)\leq \frac32X(t) ,\qquad \| u(t)\|_{H^2}^2\leq \wt{X}(t)\leq \frac32C_2(\| u(t)\|_{H^1}^2)\| u(t)\|_{H^2}^2 \]
and, putting \eqref{est:dtu0}, \eqref{est:dtu1} and \eqref{est:H2energy-X} together, there exists a smooth, increasing function $C_4:\R_+\to [1,\infty)$ such that
\[ \frac{d}{dt}\wt{X}(t)\leq C_4 \big(\| u(t)\|_{H^2}^2 \big)\| u(t)\|_{H^2}^2\leq C_4\big( \wt{X}(t) \big)\wt{X}(t). \]
This implies that
\[ \wt{X}(t)\leq 2\wt{X}(0)\qquad \text{for}\quad 0\leq t\leq T^*,\]
where $T^*:= \min \left\{ T, \frac{\wt{X}(0)}{C_4(2\wt{X}(0))\cdot 2\wt{X}(0)}\right\}$. Therefore, the $H^2$ a priori bound \eqref{apriori-lwp1} follows by setting
\[ C_0(r):=\big[3C_2(r^2)\big]^{1/2},\qquad T_0(r):=\frac{1}{2C_4\big( 3C_2(r^2)r^2\big)}.
\]
\end{proof}

Next, we prove a priori estimate of $\| \p_x^ku(t)\|_{L^2}$ ($k\geq 3$) using the above $H^2$ bound.

\begin{prop}[Higher-order a priori bound]\label{prop:Hkapriori}
For any integer $k\geq 3$, there exists $C:\R_+\to [1,\infty)$ (depending on $k$) such that the following holds: Let $\ve\in (0,1)$, $T>0$, and suppose that $u$ is a smooth solution to \eqref{kdnls-ve} on $[0,T]$ (say, $u\in C([0,T],H^{k+5})$).
Then, we have
\begin{gather}
\| u(t)\|_{H^k}\leq C(\| \phi\|_{H^2}) \| \phi\|_{H^k}\qquad \text{for}\quad 0\leq t\leq \min \big\{ T,  T_0(\| \phi\|_{H^2}) \big\}, \label{apriori-lwp2}
\end{gather}
where $T_0(\cdot)$ is the function given in Proposition~\ref{prop:H2apriori}.
\end{prop}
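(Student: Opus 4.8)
The plan is to run the same gauge / modified-energy / normal-form scheme as in the proof of Proposition~\ref{prop:H2apriori}, now with $\p_x^2$ replaced by $\p_x^k$, and to exploit the decisive simplification that on the interval $[0,\min\{T,T_0(\|\phi\|_{H^2})\}]$ the $H^2$ norm is \emph{already} controlled: by \eqref{apriori-lwp1} we may set $M:=C_0(\|\phi\|_{H^1})\|\phi\|_{H^2}$ and treat every occurrence of $\|u\|_{L^2}$, $\|u\|_{H^1}$, $\|u\|_{H^2}$ and $e^{C\|u\|_{L^2}^2}$ as a constant $\les_M 1$. I would argue by induction on $k$: the base case $k=2$ is Proposition~\ref{prop:H2apriori}, and assuming the $H^{k-1}$ bound \eqref{apriori-lwp2} I prove the $H^k$ bound. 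As in Proposition~\ref{prop:H2apriori}, I first record, via $\p_x^ku=P_{\leq 1}\p_x^ku+\sum_\pm e^{\rho^\mp[u]}e^{\rho^\pm[u]}Q_\pm\p_x^ku$ and Lemma~\ref{lem:rho}, that $\|e^{\rho^\pm[u]}Q_\pm\p_x^ku\|_{L^2}^2$ together with $\|u\|_{H^{k-1}}^2$ controls $\|u\|_{H^k}^2$ from both sides up to $\les_M$ constants. I then differentiate $\|e^{\rho^\pm[u]}Q_\pm\p_x^ku\|_{L^2}^2$ in $t$, insert \eqref{kdnls-ve}, and integrate by parts to obtain the exact analog of \eqref{eq:dt-vpm}: a gauge-derivative term and an $\ve$-term handled by Lemma~\ref{lem:rho}, a good dissipative term $-2\ve\|e^{\rho^\pm[u]}Q_\pm\p_x^{k+1}u\|_{L^2}^2$, and the cubic term paired against $\ol{Q_\pm\p_x^ku}$. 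Lemma~\ref{lem:int}, estimate \eqref{lem:int1}, reduces this cubic term (modulo a remainder that is $\les_M\|u\|_{H^k}$ once paired) to exactly the three pieces which are the analogs of \eqref{H2-1}, \eqref{H2-2} and \eqref{H2-3}.

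I would then dispatch the three pieces. The piece $(2\alpha|u|^2+\beta\cH(|u|^2))\p_x^{k+1}u$ splits into a commutator, bounded by Lemma~\ref{lem:commutator} as $\les\|u\|_{H^2}^2\|u\|_{H^k}$, and a divergence-form term integrated by parts; both are $\les_M\|u\|_{H^k}^2$. The piece $\beta\cH(\p_x^{k+1}u\,\ol u)u$ is handled by the gauge exactly as for \eqref{H2-2}: its high-low resonant part cancels against $(\mp i\beta)|u|^2Q_\pm\p_x^{k+1}u$, and the remaining Littlewood--Paley sum (now carrying $N_3^{k+1}$ in place of $N_3^3$, but still restricted to $N_3\les N_1\vee N_2$) converges and is $\les_M\|u\|_{H^k}^2$. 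The last and most delicate piece $\halbe(u\p_x^{k+1}\ol u)u$ is treated by the normal-form computation \eqref{eq:esti-r1}: I decompose the multiplier as $(\xi-\eta-\sigma)^{k+1}=(\xi-\eta-\sigma)^{k-1}(\xi-\eta-\sigma)^2$ and apply \eqref{xi-decomposition}. The three resulting ``good product'' terms then carry at most $k$ derivatives on the differentiated factor and are directly $\les_M\|u\|_{H^k}^2$, while the remaining resonant multiplier $(\xi-\eta-\sigma)^{k-1}\eta\sigma$ is integrated by parts in time via $\eta\sigma\, e^{2it\eta\sigma}=(2i)^{-1}\frac{d}{dt}e^{2it\eta\sigma}$. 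This produces a boundary term $\cI_k(t)$ (the analog of $\cI_2$, with the factor $(\xi-\eta-\sigma)^{k-1}$) together with the analogs of \eqref{eq:esti-r1-2}--\eqref{eq:esti-r1-3}; the latter are estimated by higher-order analogs of Lemmas~\ref{lem:rho} and \ref{lem:dt-bound} (the $\p_t$-factors landing in $H^{k-1}$, respectively $\|\p_th\|_{H^{1-k}}$), giving contributions $\les_M\ve\|\p_x^{k+1}u\|_{L^2}(1+\cdots)+\|u\|_{H^k}^2$ in which the $\ve$-part is absorbed by $-2\ve\|e^{\rho^\pm[u]}Q_\pm\p_x^{k+1}u\|_{L^2}^2$ as before.

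Collecting these estimates yields
\[
\frac{d}{dt}\Big(\|e^{\rho^\pm[u]}Q_\pm\p_x^ku\|_{L^2}^2-\cI_k(t)\Big)\ \les_M\ \|u(t)\|_{H^k}^2,
\qquad
|\cI_k(t)|\ \leq\ \tfrac14\|e^{\rho^\pm[u]}Q_\pm\p_x^ku\|_{L^2}^2+C_M\|u(t)\|_{H^{k-1}}^2 .
\]
By the inductive $H^{k-1}$ bound, $\|u(t)\|_{H^{k-1}}^2\les_M\|\phi\|_{H^{k-1}}^2\leq\|\phi\|_{H^k}^2$, so $\cI_k$ is absorbable and a modified energy $\wt X_k(t)$, comparable to $\|u(t)\|_{H^k}^2$ up to $\les_M$ constants (built exactly as $\wt X$ from the gauge energy minus $\cI_k$ plus lower-order pieces), satisfies the linear differential inequality $\frac{d}{dt}\wt X_k\les_M\wt X_k+C_M\|\phi\|_{H^k}^2$ on $[0,T_0(\|\phi\|_{H^2})]$. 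Gr\"onwall then gives $\wt X_k(t)\les_M\wt X_k(0)+\|\phi\|_{H^k}^2\les_M\|\phi\|_{H^k}^2$, which is \eqref{apriori-lwp2}. The crucial point is that, with $H^2$ and $H^{k-1}$ already controlled, every subcritical quantity is a constant $\les_M\|\phi\|_{H^{k-1}}^{(\cdot)}$, so the $H^k$ estimate is genuinely \emph{linear} and all bounds stay homogeneous of degree two in $\|\phi\|_{H^k}$, producing the desired linear-in-$\|\phi\|_{H^k}$ conclusion. The main obstacle, as in Proposition~\ref{prop:H2apriori}, is the derivative loss in $\halbe(u\p_x^{k+1}\ol u)u$: its top-order derivative $(k+1>k)$ must be recovered through the oscillatory integration by parts in time, and one must verify that the generated correction $\cI_k$ and the $\p_t$-terms remain at the controlled order $k-1$ (via the higher-order analog of Lemma~\ref{lem:dt-bound}) so that absorption and the homogeneous bookkeeping close.
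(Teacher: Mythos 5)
Your proposal follows the paper's proof almost verbatim: the same gauge energy $\|e^{\rho^\pm[u]}Q_\pm\p_x^ku\|_{L^2}^2$, the same three-way splitting of the cubic term via Lemma~\ref{lem:int}, the commutator estimate of Lemma~\ref{lem:commutator} for the first piece, the Littlewood--Paley cancellation for the second, and the normal-form correction $\cI_k$ with multiplier $(\xi-\eta-\sigma)^{k-1}$ obtained from \eqref{xi-decomposition} and integration by parts in $t$ for the third, all closed by Gr\"onwall on $[0,\min\{T,T_0(\|\phi\|_{H^2})\}]$ using Proposition~\ref{prop:H2apriori}. The one genuine structural difference is how the correction $\cI_k$ is absorbed into the modified energy. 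You bound $|\cI_k|\leq\frac14\|e^{\rho^\pm[u]}Q_\pm\p_x^ku\|_{L^2}^2+C_M\|u\|_{H^{k-1}}^2$ and then invoke an \emph{inductive} $H^{k-1}$ bound to make the additive term a constant $\les_M\|\phi\|_{H^k}^2$, so your argument is an induction on $k$ with base case Proposition~\ref{prop:H2apriori}. The paper instead avoids induction entirely: it interpolates $\|\p_x^{k-1}u\|_{L^2}\leq\|u\|_{L^2}^{1/k}\|\p_x^ku\|_{L^2}^{(k-1)/k}$ and applies Young's inequality to get $|\cI_k|\leq\frac12\|e^{\rho^\pm[u]}Q_\pm\p_x^ku\|_{L^2}^2+C(\|u\|_{H^1})\|u\|_{L^2}^2$, so only the $H^2$ bound is ever needed as input. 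Both routes are valid and give the same linear-in-$\|\phi\|_{H^k}$ conclusion; the paper's version is marginally cleaner bookkeeping (no induction hypothesis to carry, and the constant in the final estimate depends only on $\|\phi\|_{H^2}$ in an explicitly non-recursive way), while yours costs nothing beyond restating the proposition for $k-1$.
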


\begin{proof}
The argument is parallel to the $H^2$ case in the previous proposition.
Noticing
\begin{equation}\label{est:dku}
 \| \p_x^ku\|_{L^2}\les e^{C\| u\|_{L^2}^2}\Big( \| e^{\rho^\pm[u]}Q_\pm \p_x^ku\|_{L^2}+\| u\|_{L^2}\Big), 
\end{equation}
we estimate the time derivative of $\| e^{\rho^\pm[u]}Q_\pm \p_x^ku\|_{L^2}$.
Similarly to \eqref{eq:dt-vpm}, we have
\begin{align*}
\frac{d}{dt}\| e^{\rho^\pm[u(t)]}Q_\pm \p_x^ku(t)\|_{L^2}^2 
&\leq \Big( \| \p_te^{2\rho^\pm[u]}\|_{L^\infty}+\| \p_x^2e^{2\rho^\pm[u]}\|_{L^\infty}\Big) \| \p_x^k u(t)\|_{L^2}^2-2\ve \| e^{\rho^\pm[u]}Q_\pm \p_x^{k+1}u\|_{L^2}^2 \\
&\quad +2{\rm Re}\int_{\R} e^{2\rho^\pm[u]}\Big( Q_\pm \p_x^{k+1}\mathcal{N}[u] -(\mp i\beta) |u|^2Q_\pm \p_x^{k+1}u\Big) \ol{Q_\pm \p_x^ku}\,dx.
\end{align*}
To estimate the last integral, we decompose the cubic terms as
\begin{align*}
Q_\pm \p_x^{k+1}\mathcal{N}[u] -(\mp i\beta) |u|^2Q_\pm \p_x^{k+1}u 
&=Q_\pm \Big( 2\alpha |u|^2\p_x^{k+1}u+\beta \cH(|u|^2) \p_x^{k+1}u \Big) \\ 
&\quad +\beta \Big( Q_\pm \big[ u\cH(\ol{u}\p_x^{k+1}u)\big] -u\ol{u}Q_\pm \cH \p_x^{k+1}u\Big) \\
&\quad +Q_\pm \Big( \halbe\big( u\p_x^{k+1}\ol{u}\big) u\Big) + \text{(l.o.t.)} ,
\end{align*}
and estimate the lower order terms by Lemma~\ref{lem:int}:
\[ \Big| 2{\rm Re}\int_{\R} e^{2\rho^\pm[u]}\text{(l.o.t.)}\ol{Q_\pm \p_x^ku}\,dx\Big| \les e^{C\| u\|_{L^2}^2}\| u\|_{H^2}^2\| u\|_{H^k}^2.\]
The first term on the right-hand side is treated by commutator estimate from Lemma~\ref{lem:commutator}:
\[ \big\| \big[ Q_\pm, 2\alpha |u|^2+\beta \cH(|u|^2)\big] \p_x^{k+1} u\big\|_{L^2}\les \| 2\alpha |u|^2+\beta \cH(|u|^2)\|_{H^2}\| u\|_{H^k}\les \| u\|_{H^2}^2\| u\|_{H^k},\]
and integration by parts.
The second term can be treated in the same way as \eqref{H2-2} in the $H^2$ case.
For the contribution from the third one, as before, we divide the term using \eqref{xi-decomposition} and apply integration by parts in $t$ to the last term, obtaining some terms bounded by $C(\| u\|_{H^2})\| u\|_{H^k}^2$ and
\begin{align*}
&\frac{d}{dt}\int_{\R^3} e^{2it\eta\sigma}m(\xi,\sigma) (2i)^{-1}(\xi-\eta-\sigma)^{k-1}\wh{f}(\xi-\eta)\ol{\wh{f}(\xi-\eta-\sigma)}\wh{f}(\xi-\sigma)\ol{\wh{h}(\xi)}\,d\eta\, d\sigma\, d\xi, \\
&\int_{\R^3} e^{2it\eta\sigma}m(\xi,\sigma) (2i)^{-1}(\xi-\eta-\sigma)^{k-1}\p_t\Big[ \wh{f}(\xi-\eta)\ol{\wh{f}(\xi-\eta-\sigma)}\wh{f}(\xi-\sigma)\Big] \ol{\wh{h}(\xi)}\,d\eta\, d\sigma\, d\xi, \\
&\int_{\R^3} e^{2it\eta\sigma}m(\xi,\sigma)(2i)^{-1} (\xi-\eta-\sigma)^{k-1}\wh{f}(\xi-\eta)\ol{\wh{f}(\xi-\eta-\sigma)}\wh{f}(\xi-\sigma)\ol{\p_t\wh{h}(\xi)}\,d\eta\, d\sigma\, d\xi ,
\end{align*}
where $f(t)=e^{-it\p_x^2}u(t)$ is the same as before but $h(t):=e^{-it\p_x^2}\big[ e^{2\rho^\pm[u(t)]}Q_\pm \p_x^ku(t)\big]$. 
The second integral is bounded by
\[ \Big( \| \p_tf\|_{L^2}\| \p_x^{k-1}f\|_{H^1}\| f\|_{H^1}+\| f\|_{H^1}^2\| \p_x^{k-1}\p_tf\|_{L^2}\Big) \| e^{2\rho^\pm[u]}\|_{L^\infty}\| \p_x^ku\|_{L^2},\]
while the third one is bounded by
\begin{align*}
&\| f\|_{H^1}^2\| \p_x^{k-1}f\|_{H^1}\| \p_th\|_{H^{-1}}\\
&\quad 
\les \| u\|_{H^1}^2\| u\|_{H^k}\Big\{ \begin{aligned}[t]
&\| (\p_t-i\p_x^2)e^{2\rho^\pm[u]}\|_{L^\infty}\| \p_x^ku\|_{L^2}+\| \p_xe^{2\rho^\pm[u]}\|_{W^{1,\infty}}\| \p_x^{k+1}u\|_{H^{-1}}\\
&+\| e^{2\rho^\pm[u]}\|_{W^{1,\infty}}\| \p_x^k(\p_t-i\p_x^2)u\|_{H^{-1}}\Big\} .
\end{aligned}
\end{align*}
These terms are treated by using Lemmas~\ref{lem:rho}, \ref{lem:dt-bound} and the factor $-2\ve\| e^{\rho^\pm[u]}Q_\pm \p_x^{k+1}u\|_{L^2}^2$ in the differential inequality.
We then obtain
\begin{align*}
&\frac{d}{dt}\Big( \| e^{\rho^\pm[u(t)]}Q_\pm \p_x^ku(t)\|_{L^2}^2-\cI_k(t) \Big) \leq C(\| u\|_{H^2})\| u\|_{H^k}^2,
\end{align*}
where
\begin{align*}
	\cI_k(t):= 2 {\rm Re} \int_{\R^3} e^{2it\eta\sigma}m(\xi,\sigma)(2i)^{-1}(\xi-\eta-\sigma)^{k-1}\wh{f}(\xi-\eta)\ol{\wh{f}(\xi-\eta-\sigma)}\wh{f}(\xi-\sigma)\ol{\wh{h}(\xi)}\,d\eta\, d\sigma\, d\xi.
\end{align*}
Using \eqref{est:dku} and interpolation inequalities, we see that
\begin{align*}
|\cI_k(t)|&\leq Ce^{C\| u\|_{L^2}^2}\| u\|_{H^1}^2\| \p_x^{k-1}u\|_{L^2}\| e^{\rho^\pm[u]}Q_\pm \p_x^ku\|_{L^2} \\
&\leq C(\| u\|_{H^1})\| u\|_{L^2}^{\frac{1}{k}}\| \p_x^ku\|_{L^2}^{\frac{k-1}{k}}\| e^{\rho^\pm[u]}Q_\pm \p_x^ku\|_{L^2}\\
&\leq C(\| u\|_{H^1})\Big\{ \| u\|_{L^2}^{\frac{1}{k}}\| e^{\rho^\pm[u]}Q_\pm \p_x^ku\|_{L^2}^{2-\frac1k}+\| u\|_{L^2}\| e^{\rho^\pm[u]}Q_\pm \p_x^ku\|_{L^2}\Big\} \\
&\leq \frac12 \| e^{\rho^\pm[u]}Q_\pm \p_x^ku\|_{L^2}^2+C(\| u\|_{H^1})\| u\|_{L^2}^2.
\end{align*}
We set
\begin{align*}
X_k(t)&:=Ce^{C\| u(t)\|_{L^2}^2}\Big( \| e^{\rho^\pm[u(t)]}Q_\pm \p_x^ku(t)\|_{L^2}^2- \cI_k(t)+C(\| u(t)\|_{H^1})\| u(t)\|_{L^2}^2\Big) .
\end{align*}
From \eqref{est:dku}, \eqref{est:dtu0} and \eqref{est:dtu1}, we have
\[ \| u(t)\|_{H^k}^2\leq X_k(t),\qquad \frac{d}{dt}X_k(t)\leq C(\| u(t)\|_{H^2})\Big( X_k(t)+\| u(t)\|_{H^2}^2\Big) .\]
By the $H^2$ bound \eqref{apriori-lwp1} obtained in Proposition~\ref{prop:H2apriori} and Gronwall's inequality, we have
\begin{align*}
\| u(t)\|_{H^k}^2&\leq \Big( X_k(0)+C(\| \phi\|_{H^2})\| \phi\|_{H^2}^2\Big) e^{C(\| \phi\|_{H^2})t}\leq C(\| \phi\|_{H^2})\| \phi\|_{H^k}^2
\end{align*}
for $ 0\leq   t\leq \min \big\{ T,T_0(\|\phi\|_{H^2}) \big\}$.
\end{proof}

\subsection{A priori bounds in $H^{1,1}$, $H^{k-1,1}$}

We turn to the estimate of the weighted norms.
Note that 
\[ \| f\|_{H^{k}}+\| f\|_{H^{k-1,1}}\sim \| f\|_{H^{k}}+\| J(t)f\|_{H^{k-1}} \]
for any integer $k\geq 1$, where the $t$-dependence of the implicit constant is uniform on each bounded interval.
Thus, we consider a priori estimates for $\| J(t)u(t)\|_{H^{k-1}}$ instead of $\| u(t)\|_{H^{k-1,1}}$.

\begin{prop}[$H^{1,1}$ a priori bound]\label{prop:H11apriori}
There exists $C:\R_+\to [1,\infty)$ such that the following holds: 
Let $\ve\in (0,1)$, $T>0$, and suppose that $u\in C([0,T],H^7\cap H^{6,1})$ is a smooth solution to \eqref{kdnls-ve} on $[0,T]$ with an initial data $\phi$. Then, we have
\begin{gather}
\| J(t)u(t)\|_{H^1}\leq C(\| \phi\|_{H^2})\| \phi\|_{H^{2}\cap H^{1,1}} \qquad \text{for}\quad 0\leq t\leq  \min \big\{ T, T_0(\| \phi\|_{H^2}) \big\}, \label{apriori-lwp3}
\end{gather}
where $T_0(\cdot)$ is the function given in Proposition~\ref{prop:H2apriori}.
\end{prop}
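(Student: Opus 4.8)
The plan is to run the same modified--energy scheme as in Proposition~\ref{prop:H2apriori}, but now for $Ju$ via its evolution equation \eqref{eq:Ju}, treating the already-established $H^2$ bound \eqref{apriori-lwp1} as an input: all implicit constants below will depend only on $\|u(t)\|_{H^2}$, hence (on the interval $0\le t\le T_0(\|\phi\|_{H^2})$) only on $\|\phi\|_{H^2}$. Since $\|Ju\|_{H^1}\sim \|Ju\|_{L^2}+\|\p_x Ju\|_{L^2}$, I would estimate the two pieces separately. For $\frac{d}{dt}\|Ju\|_{L^2}^2$ I substitute \eqref{eq:Ju}: the dispersive term is purely imaginary and drops, the regularization gives $-2\ve\|\p_xJu\|_{L^2}^2\le 0$, and the remaining terms $\p_xJ\mathcal{N}[u]$, $-2\ve\p_xu$, $-\mathcal{N}[u]$, after one integration by parts and the $J$--Leibniz rule, are all bounded by $C(\|u\|_{H^2})\big(1+\|Ju\|_{H^1}^2\big)$. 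At this level no gauge is needed: the worst term $\beta\cH(\p_xJu\,\ol{u})u$ produces only a single derivative of $Ju$, which is absorbed into $\|Ju\|_{H^1}^2$ by Cauchy--Schwarz and Young.

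The substantive part is the top-order estimate of $\|\p_xJu\|_{L^2}^2$, where $\p_x^2J\mathcal{N}[u]$ loses a derivative and forces the gauge $e^{\rho^\pm[u]}$ of \eqref{def:gauge}. As in the $H^2$ case I pass to $\|e^{\rho^\pm[u]}Q_\pm\p_xJu\|_{L^2}^2$ (with $\pm$ summed), which together with $P_{\le 1}\p_xJu$ controls $\|\p_xJu\|_{L^2}$ up to $e^{C\|u\|_{L^2}^2}\|Ju\|_{L^2}$ via Lemma~\ref{lem:rho}. Differentiating and inserting \eqref{eq:Ju}, the interaction of $i\p_x^2$ with the non-constant gauge, through $\p_xe^{2\rho^\pm[u]}=\mp\beta|u|^2e^{2\rho^\pm[u]}$, produces exactly the term $-(\mp i\beta)|u|^2Q_\pm\p_x^2Ju$ that cancels the resonant high--low part of $Q_\pm\p_x^2J\mathcal{N}[u]$; this is the resonance $\beta\cH(\p_x^2Ju\,\ol{u})u$ isolated in \eqref{lem:int2} with $k=2$. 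After the cancellation the surviving main pieces are: the term $(2\alpha|u|^2+\beta\cH(|u|^2))\p_x^2Ju$, treated by Lemma~\ref{lem:commutator} (applied with $g=\p_xJu$) followed by integration by parts; the gauge and regularization corrections, bounded by Lemma~\ref{lem:rho}; and the $L^2$ remainder of Lemma~\ref{lem:int} together with the lower-order contributions $-2\ve\p_xu$, $-\mathcal{N}[u]$, all controlled by $C(\|u\|_{H^2})\big(1+\|Ju\|_{H^1}^2\big)$.

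The one genuinely derivative-losing, non-resonant term is $\halbe\big(u\,\p_x^2\ol{Ju}\big)u$ (note the sign flip relative to \eqref{lem:int1}), which I remove by the normal-form/integration-by-parts-in-$t$ argument of \eqref{eq:esti-r1}--\eqref{eq:esti-r1-3}: writing $f=e^{-it\p_x^2}u$, $g=e^{-it\p_x^2}Ju$, and $h=e^{-it\p_x^2}\big[e^{2\rho^\pm[u]}Q_\pm\p_xJu\big]$, I express the contribution as an oscillatory integral, decompose the top multiplier by \eqref{xi-decomposition}, and integrate by parts in $t$ on the non-oscillatory factor $\eta\sigma$. This yields a boundary correction $\cI_J(t)$ plus terms carrying $\p_tg$ and $\p_th$; here it is essential to use the second estimate of Lemma~\ref{lem:dt-bound} for $\p_t(e^{-it\p_x^2}Ju)$, combined with the dissipative factor $-2\ve\|e^{\rho^\pm[u]}Q_\pm\p_x^2Ju\|_{L^2}^2$ to absorb the $\ve$-weighted highest derivatives. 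Defining, for $C_1$ large, a modified energy $\wt{Y}(t):=C_1e^{C_1\|u\|_{L^2}^2}\big(\|e^{\rho^\pm[u]}Q_\pm\p_xJu\|_{L^2}^2-\cI_J(t)+\|Ju\|_{L^2}^2+\text{(l.o.t.)}\big)$ with $\|Ju\|_{H^1}^2\le \wt{Y}\le C(\|u\|_{H^1})\|Ju\|_{H^1}^2$, I obtain a differential inequality $\frac{d}{dt}\wt{Y}\le C(\|u\|_{H^2})\big(\wt{Y}+\|u\|_{H^2}^2\big)$. Feeding in \eqref{apriori-lwp1} and applying Gronwall on $[0,T_0(\|\phi\|_{H^2})]$, while noting $\wt{Y}(0)\lesssim \|J(0)\phi\|_{H^1}^2=\|x\phi\|_{H^1}^2\lesssim \|\phi\|_{H^{1,1}}^2$ and $T_0\le 1$, yields \eqref{apriori-lwp3}.

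The hardest point is the top-order estimate, and specifically the normal-form step: because the derivative-losing non-resonant term now carries $\ol{Ju}$, closing the argument requires the time-regularity of $Ju$ itself rather than of $u$, which is exactly why the $\p_t(e^{-it\p_x^2}Ju)$ bound of Lemma~\ref{lem:dt-bound} is needed. The remaining care is bookkeeping the non-commutativity of $J(t)=x+2it\p_x$ with $\p_x$, with $Q_\pm$, and with the multiplication operators: each such commutator produces only lower-order terms absorbable into $C(\|u\|_{H^2})\|Ju\|_{H^1}^2$, but this must be verified term by term.
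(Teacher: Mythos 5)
Your proposal follows the paper's proof essentially step for step: the direct $L^2$ estimate for $Ju$ without gauge, the gauged quantity $\|e^{\rho^\pm[u]}Q_\pm\p_xJu\|_{L^2}$ with the cancellation of the resonant high--low part of $\beta\cH(\p_x^2Ju\,\ol u)u$ against the term generated by $i\p_x^2$ acting on the gauge, the commutator/dyadic treatment of the remaining quadratic-coefficient terms, the normal form for $\halbe(u\,\p_x^2\ol{Ju})u$ with $g=e^{-it\p_x^2}Ju$ and the second estimate of Lemma~\ref{lem:dt-bound}, the $\ve$-absorption, the modified energy with the boundary correction, and Gronwall fed by \eqref{apriori-lwp1}. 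This is the paper's argument, and the key mechanisms (including the sign flip from the $J$-Leibniz rule on the conjugated factor) are correctly identified.
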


\begin{proof}
The equation \eqref{eq:Ju} for $J(t)u(t)$ yields that
\begin{align*}
\frac{d}{dt}\| Ju\|_{L^2}^2&\les \Big( \| \p_xJ\mathcal{N}[u]\|_{L^2}+\ve \| \p_xu\|_{L^2}+\| \mathcal{N}[u]\|_{L^2}\Big) \| Ju\|_{L^2} \\
&\leq C(\| u\|_{H^1})\Big( \| Ju\|_{H^1}^2+\| u\|_{H^1}^2\Big).
\end{align*} 
Then we estimate $\| \p_xJu\|_{L^2}$.
In view of 
\[ \| \p_xJu\|_{L^2}\les \| Ju\|_{L^2}+e^{C\| u\|_{L^2}^2}\| e^{\rho^\pm[u]}Q_\pm \p_xJu\|_{L^2},\] 
we may consider $\| e^{\rho^\pm[u]}Q_\pm \p_xJu\|_{L^2}$ and estimate 
\begin{align*}
	\frac{d}{dt}\| e^{\rho^\pm[u]}Q_\pm \p_xJu\|_{L^2}^2 &=\int_{\R} \p_t\big( e^{2\rho^\pm[u]}\big) |Q_\pm \p_xJu|^2\,dx +2{\rm Re}\int_{\R} e^{2\rho^\pm[u]}Q_\pm \p_x\p_t(Ju)\,\ol{Q_\pm \p_xJu}\,dx \\
&= \int_{\R} \big( \p_te^{2\rho^\pm[u]} +\ve \p_x^2e^{2\rho^\pm[u]}\big) |Q_\pm \p_xJu|^2\,dx -2\ve \| e^{\rho^\pm[u]}Q_\pm \p_x^2Ju\|_{L^2}^2 \\
&\quad -2{\rm Re}\,(\mp i\beta)\int_{\R}e^{2\rho^\pm[u]}|u|^2Q_\pm \p_x^2Ju\,\ol{Q_\pm\p_xJu}\,dx \\
&\quad +2{\rm Re}\int_{\R}e^{2\rho^\pm[u]}Q_\pm \Big( \p_x^2J\mathcal{N}[u]-2\ve \p_x^2u-\p_x\mathcal{N}[u]\Big) \ol{Q_\pm\p_xJu}\,dx. 
\end{align*}
From Lemmas~\ref{lem:rho} and \ref{lem:int}, the right-hand side is bounded by
\begin{align}
&C(\| u\|_{H^2})\Big( \| Ju\|_{H^1}^2+\| u\|_{H^2}^2\Big) -2\ve \| e^{\rho^\pm[u]}Q_\pm \p_x^2Ju\|_{L^2}^2 \label{eq:bound11-0} \\
&+ 2{\rm Re}\int_{\R}e^{2\rho^\pm[u]}Q_\pm \Big( \big( 2\alpha |u|^2+\beta \cH(|u|^2)\big) \p_x^2Ju \Big) \ol{Q_\pm\p_xJu}\,dx \label{eq:bound11-1} \\
&+ 2\beta {\rm Re}\int_{\R}e^{2\rho^\pm[u]}\Big( Q_\pm \big( u\cH(\ol{u} \p_x^2Ju) \big) -u\ol{u}Q_\pm \cH \p_x^2Ju \Big) \ol{Q_\pm\p_xJu}\,dx \label{eq:bound11-2} \\
&- 2{\rm Re}\int_{\R}e^{2\rho^\pm[u]}Q_\pm \Big( \halbe(u\p_x^2\ol{Ju})u\Big) \ol{Q_\pm\p_xJu}\,dx.\label{eq:bound11-3}
\end{align}
The arguments for \eqref{eq:bound11-1}--\eqref{eq:bound11-3} are analogous to the corresponding estimates for \eqref{H2-1}--\eqref{H2-3} in the proof of Proposition~\ref{prop:H2apriori}.
For \eqref{eq:bound11-1} and \eqref{eq:bound11-2}, we use Lemma~\ref{lem:commutator}, integration by parts, and dyadic decomposition suitably.
For \eqref{eq:bound11-3}, we can rewrite it as
\[ 2{\rm Re} \int_{\R^3} e^{2it\eta\sigma}m(\xi,\sigma)(\xi-\eta-\sigma)^2 \wh{f}(\xi-\eta)\ol{\wh{g}(\xi-\eta-\sigma)}\wh{f}(\xi-\sigma)\ol{\wh{h}(\xi)}\,d\eta\, d\sigma\, d\xi ,\]
where $m$, $f$ are the same as before and $g:=e^{-it\p_x^2}Ju$, and $h:=e^{-it\p_x^2}\big[ e^{2\rho^\pm[u]}Q_\pm \p_xJu\big]$. 
Then we decompose it by \eqref{xi-decomposition} and apply integration by parts in $t$ for the last term to obtain the following terms
\begin{align*}
&\frac{d}{dt}\int_{\R^3} e^{2it\eta\sigma}m(\xi,\sigma)(2i)^{-1}\wh{f}(\xi-\eta)\ol{\wh{g}(\xi-\eta-\sigma)}\wh{f}(\xi-\sigma)\ol{\wh{h}(\xi)}\,d\eta\, d\sigma\, d\xi ,\\
&\int_{\R^3} e^{2it\eta\sigma}m(\xi,\sigma)(2i)^{-1}\p_t\Big[ \wh{f}(\xi-\eta)\ol{\wh{g}(\xi-\eta-\sigma)}\wh{f}(\xi-\sigma)\ol{\wh{h}(\xi)}\Big] \,d\eta\, d\sigma\, d\xi .
\end{align*}
The second line can be estimated as before.
We only remark that substitution of the equation into $\p_tg$ and $\p_th$ creates the term $\ve \| \p_x^2Ju\|_{L^2}\| Ju\|_{H^1}$. 
This is treated using the second term in \eqref{eq:bound11-0}.

Thus we obtain the following differential inequality:
\begin{align*}
&\frac{d}{dt}\Big( \| e^{\rho^\pm[u(t)]}Q_\pm \p_xJ(t)u(t)\|_{L^2}^2- \cI_{1,1}(t)\Big) \leq C(\| u(t)\|_{H^2})\Big( \| J(t)u(t)\|_{H^1}^2+\| u(t)\|_{H^2}^2\Big),
\end{align*}
where
\begin{align*}
	\cI_{1,1}(t):= 2{\rm Re} \int_{\R^3} e^{2it\eta\sigma}m(\xi,\sigma)(2i)^{-1}\wh{f}(\xi-\eta)\ol{\wh{g}(\xi-\eta-\sigma)}\wh{f}(\xi-\sigma)\ol{\wh{h}(\xi)}\,d\eta\, d\sigma\, d\xi .
\end{align*}
The integral term can be estimated as
\begin{align*}
|\mathcal I_{1,1}(t)|&\leq Ce^{C\| u\|_{L^2}^2}\| u\|_{H^1}^2\| Ju\|_{L^2}\| e^{\rho^\pm[u]}Q_\pm \p_xJu\|_{L^2} \leq \frac12\| e^{\rho^\pm[u]}Q_\pm \p_xJu\|_{L^2}^2+C(\| u\|_{H^1})\| Ju\|_{L^2}^2.
\end{align*}
Now, we invoke the $H^2$ a priori estimate in Proposition~\ref{prop:H2apriori} to replace $\| u(t)\|_{H^2}$ and $\| u(t)\|_{H^1}$ by $C(\| \phi\|_{H^1})\| \phi\|_{H^2}$ for $0\leq t\leq  \min \big\{ T, T_0(\| \phi\|_{H^2}) \big\}$.
Then one gets
\begin{align*}
\frac{d}{dt}\| J(t)u(t)\|_{L^2}^2 &\leq C(\| \phi\|_{H^2})\Big( \| J(t)u(t)\|_{H^1}^2+\| \phi\|_{H^2}^2\Big) ,\\
\frac{d}{dt}\Big( \| e^{\rho^\pm[u(t)]}Q_\pm \p_xJ(t)u(t)\|_{L^2}^2 - \mathcal I_{1,1}(t)\Big) &\leq C(\| \phi \|_{H^2})\Big( \| J(t)u(t)\|_{H^1}^2+\| \phi \|_{H^2}^2\Big) ,\\
| \mathcal I_{1,1}(t) | &\leq \frac12\| e^{\rho^\pm[u(t)]}Q_\pm \p_xJ(t)u(t)\|_{L^2}^2+C(\| \phi\|_{H^2})\| J(t)u(t)\|_{L^2}^2.
\end{align*}
Hence, by defining
\begin{align*}
Y(t)&:=C(\|\phi\|_{H^2})\Big( \| e^{\rho^\pm[u(t)]}Q_\pm \p_xJ(t)u(t)\|_{L^2}^2+ C(\| \phi\|_{H^2})\| J(t)u(t)\|_{L^2}^2 \Big) ,\\
\wt{Y}(t)&:=C(\|\phi\|_{H^2})\Big( \| e^{\rho^\pm[u(t)]}Q_\pm \p_xJ(t)u(t)\|_{L^2}^2-\mathcal I_{1,1}(t)+ C(\| \phi\|_{H^2}) \| J(t)u(t)\|_{L^2}^2\Big) ,
\end{align*}
we have
\[ \| J(t)u(t)\|_{H^1}^2\leq \frac12Y(t)\leq \wt{Y}(t)\leq \frac32Y(t)\leq C(\| \phi\|_{H^2})\| J(t)u(t)\|_{H^1}^2 \]
and 
\[ \frac{d}{dt}\wt{Y}(t)\leq C(\| \phi\|_{H^2})\Big( \wt{Y}(t)+\| \phi\|_{H^2}^2\Big) \]
for $0\leq t\leq  \min \big\{ T, T_0(\| \phi\|_{H^2}) \big\}$.
By Gronwall's inequality, we obtain
\[ \| J(t)u(t)\|_{H^1}^2\leq \Big( \wt{Y}(0)+\| \phi\|_{H^2}^2\Big) e^{C(\| \phi\|_{H^2})T_0}\leq C(\| \phi\|_{H^2})\Big( \| x\phi\|_{H^1}^2+\| \phi\|_{H^2}^2\Big) .\]
This completes the proof of \eqref{apriori-lwp3}. 
\end{proof}

We now address a priori weighted estimate in higher regularity.

\begin{prop}[Higher-order a priori bound in weighted space]\label{prop:Hk1apriori}
For any integer $k\geq 3$, there exists $C:\R_+\to [1,\infty)$ (depending on $k$) such that the following holds.
Let $\ve\in (0,1)$, $T>0$, and suppose that $u$ is a smooth solution to \eqref{kdnls-ve} on $[0,T]$ (say, $u\in C([0,T],H^{k+5}\cap H^{k+4,1})$).
Then, we have
\begin{gather}
\| J(t)u(t)\|_{H^{k-1}}\leq C(\| \phi\|_{H^2\cap H^{1,1}})\| \phi\|_{H^{k}\cap H^{k-1,1}} \qquad \text{for}\quad 0\leq t\leq  \min \big\{ T, T_0(\| \phi\|_{H^2}) \big\}. \label{apriori-lwp4}
\end{gather}
\end{prop}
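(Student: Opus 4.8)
The plan is to follow the scheme of Proposition~\ref{prop:Hkapriori}, now carried out for the weighted variable $J(t)u$ as in Proposition~\ref{prop:H11apriori}, and to close the resulting differential inequality by inserting the lower-order bounds already established in Propositions~\ref{prop:H2apriori}, \ref{prop:Hkapriori} and \ref{prop:H11apriori}, all valid on $[0,\min\{T,T_0(\|\phi\|_{H^2})\}]$. By the norm equivalence $\|f\|_{H^k}+\|J(t)f\|_{H^{k-1}}\sim\|f\|_{H^k}+\|f\|_{H^{k-1,1}}$ and the gauge decomposition (the analog of \eqref{est:dku} applied to $Ju$),
\[ \|\p_x^{k-1}Ju\|_{L^2}\les \|Ju\|_{L^2}+e^{C\|u\|_{L^2}^2}\|e^{\rho^\pm[u]}Q_\pm\p_x^{k-1}Ju\|_{L^2}, \]
it suffices to control $\|Ju\|_{L^2}$ (handled via \eqref{eq:Ju} as in Proposition~\ref{prop:H11apriori}) together with $\|e^{\rho^\pm[u]}Q_\pm\p_x^{k-1}Ju\|_{L^2}$. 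Differentiating the latter squared and substituting \eqref{eq:Ju} produces, after integration by parts, the gauge factors $\p_te^{2\rho^\pm[u]}$ and $\p_x^2e^{2\rho^\pm[u]}$ (bounded by Lemma~\ref{lem:rho}), the dissipative gain $-2\ve\|e^{\rho^\pm[u]}Q_\pm\p_x^kJu\|_{L^2}^2$, the contributions of the lower-order forcing $-2\ve\p_xu-\mathcal{N}[u]$ in \eqref{eq:Ju} (which are harmless since $\|u\|_{H^k}$ is already bounded), and the main cubic term
\[ 2{\rm Re}\int e^{2\rho^\pm[u]}\Big(Q_\pm\p_x^kJ\mathcal{N}[u]-(\mp i\beta)|u|^2Q_\pm\p_x^kJu\Big)\ol{Q_\pm\p_x^{k-1}Ju}\,dx. \]

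For the main cubic term I would invoke \eqref{lem:int2} to split $Q_\pm\p_x^kJ\mathcal{N}[u]-(\mp i\beta)|u|^2Q_\pm\p_x^kJu$ into three leading pieces plus a remainder controlled in $L^2$ by $\|u\|_{H^1}(\|u\|_{H^2}\|Ju\|_{H^{k-1}}+\|Ju\|_{H^1}\|u\|_{H^k})$. The piece $Q_\pm\big((2\alpha|u|^2+\beta\cH(|u|^2))\p_x^kJu\big)$ is treated exactly as \eqref{H2-1}: commute $Q_\pm$ past the smooth coefficient via Lemma~\ref{lem:commutator} and integrate by parts in $x$. The piece $\beta\big(Q_\pm[u\cH(\ol{u}\p_x^kJu)]-u\ol{u}Q_\pm\cH\p_x^kJu\big)$ is the resonant interaction removed by the gauge; as in \eqref{H2-2} its dangerous high-low part $N_3\gg N_1\vee N_2$ cancels, and the remaining dyadic sum is summable. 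The third piece $-Q_\pm(\halbe(u\p_x^k\ol{Ju})u)$ requires the normal-form step: writing $f=e^{-it\p_x^2}u$, $g=e^{-it\p_x^2}Ju$, $h=e^{-it\p_x^2}[e^{2\rho^\pm[u]}Q_\pm\p_x^{k-1}Ju]$ and passing to the Fourier side yields an oscillatory integral with phase $e^{2it\eta\sigma}$ and multiplier $(\xi-\eta-\sigma)^k$; decomposing this power by \eqref{xi-decomposition} and integrating by parts in $t$ on the resonant factor $\eta\sigma e^{2it\eta\sigma}=(2i)^{-1}\frac{d}{dt}e^{2it\eta\sigma}$ produces a total time derivative $\frac{d}{dt}\cI_{k-1,1}(t)$ (absorbed into a modified energy) together with terms carrying $\p_tf$, $\p_tg$, $\p_th$, estimated by Lemmas~\ref{lem:rho} and \ref{lem:dt-bound}; the $\ve$-dependent contributions there are absorbed by the dissipative gain $-2\ve\|e^{\rho^\pm[u]}Q_\pm\p_x^kJu\|_{L^2}^2$.

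Collecting everything, I expect a differential inequality of the form
\[ \frac{d}{dt}\Big(\|e^{\rho^\pm[u]}Q_\pm\p_x^{k-1}Ju\|_{L^2}^2-\cI_{k-1,1}(t)\Big)\les C(\|u\|_{H^2})\Big(\|Ju\|_{H^{k-1}}^2+\|u\|_{H^k}^2+\|Ju\|_{H^1}^2\Big), \]
where $\cI_{k-1,1}$ carries the reduced multiplier $(\xi-\eta-\sigma)^{k-2}$ and is absorbed by $\tfrac12\|e^{\rho^\pm[u]}Q_\pm\p_x^{k-1}Ju\|_{L^2}^2$ after interpolating $\|\p_x^{k-2}Ju\|_{L^2}$ between $\|Ju\|_{L^2}$ and $\|\p_x^{k-1}Ju\|_{L^2}$ and applying Young's inequality, just as for $\cI_k$ in Proposition~\ref{prop:Hkapriori}. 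On $[0,\min\{T,T_0(\|\phi\|_{H^2})\}]$ the factors $\|u\|_{H^2}$, $\|u\|_{H^k}$ and $\|Ju\|_{H^1}$ are already bounded by Propositions~\ref{prop:H2apriori}, \ref{prop:Hkapriori} and \ref{prop:H11apriori}, so after forming a modified energy $X_{k-1,1}(t)$ comparable to $\|Ju\|_{H^{k-1}}^2$ this becomes a closed linear Gronwall inequality $\frac{d}{dt}X_{k-1,1}\les C(\|\phi\|_{H^2\cap H^{1,1}})(X_{k-1,1}+\|\phi\|_{H^k\cap H^{k-1,1}}^2)$, yielding \eqref{apriori-lwp4}. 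The main obstacle, exactly as in the earlier propositions, is the top-order derivative loss in the third cubic piece: the crux is that the gauge removes the genuinely resonant high-low interaction while the normal-form integration by parts in $t$ trades the remaining derivatives for a harmless total derivative, with the parasitic $\ve\p_x^kJu$ terms (arising when \eqref{eq:Ju} is substituted into $\p_tg$ and $\p_th$) absorbed by the viscous gain. I expect the careful bookkeeping of the $J$-commutators and of these $\ve$-terms to be the most delicate part, but no new mechanism beyond Propositions~\ref{prop:Hkapriori} and \ref{prop:H11apriori} should be required.
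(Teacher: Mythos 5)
Your proposal is correct and follows essentially the same route as the paper's proof: the gauge decomposition for $\p_x^{k-1}Ju$, the three-piece splitting of the cubic term via \eqref{lem:int2} with commutator estimate, high-low cancellation, and the normal form reducing $(\xi-\eta-\sigma)^k$ to the correction $\cI_{k-1,1}$ with multiplier $(\xi-\eta-\sigma)^{k-2}$, absorbed by interpolation, and closure by Gronwall using Propositions~\ref{prop:H2apriori}--\ref{prop:H11apriori}. No substantive differences from the paper's argument.
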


\begin{proof}
We follow the same strategy.
To estimate $\| \p_x^{k-1}Ju\|_{L^2}$, we may consider $\| e^{\rho^\pm[u]}Q_\pm \p_x^{k-1}Ju\|_{L^2}$, and we have
\begin{align*}
\frac{d}{dt}\| e^{\rho^\pm[u]}Q_\pm \p_x^{k-1}Ju\|_{L^2}^2&\leq C(\| u\|_{H^2},\| Ju\|_{H^1})\Big( \| Ju\|_{H^{k-1}}^2+\| u\|_{H^k}^2\Big) -2\ve \| e^{\rho^\pm[u]}Q_\pm \p_x^kJu\|_{L^2}^2 \\
&\quad + 2{\rm Re}\int_{\R}e^{2\rho^\pm[u]}Q_\pm \Big( \big( 2\alpha |u|^2+\beta \cH(|u|^2\big) \p_x^kJu \Big) \ol{Q_\pm\p_x^{k-1}Ju}\,dx \\
&\quad + 2\beta {\rm Re}\int_{\R}e^{2\rho^\pm[u]}\Big( Q_\pm \big( u\cH(\ol{u} \p_x^kJu)\big) -u\ol{u}Q_\pm \cH \p_x^kJu \Big) \ol{Q_\pm\p_x^{k-1}Ju}\,dx \\
&\quad - 2{\rm Re}\int_{\R}e^{2\rho^\pm[u]}Q_\pm \Big( \halbe(u\p_x^k\ol{Ju})u\Big) \ol{Q_\pm\p_x^{k-1}Ju}\,dx,
\end{align*}
where we used \eqref{lem:int2} in Lemma~\ref{lem:int}.
Here, we only see the part using the normal form. 
The last line is rewritten as
\begin{align*}
&2{\rm Re} \int_{\R^3} e^{2it\eta\sigma}m(\xi,\sigma)(\xi-\eta-\sigma)^k \wh{f}(\xi-\eta)\ol{\wh{g}(\xi-\eta-\sigma)}\wh{f}(\xi-\sigma)\ol{\wh{h}(\xi)}\,d\eta\, d\sigma\, d\xi 
\end{align*}
for $m$, $f$, $g$ as before and $h:=e^{-it\p_x^2}\big[ e^{2\rho^\pm[u]}Q_\pm \p_x^{k-1}Ju\big]$.
After decomposition \eqref{xi-decomposition} and integration by parts, we obtain
\begin{align*}
&\frac{d}{dt}\int_{\R^3} e^{2it\eta\sigma}m(\xi,\sigma)(2i)^{-1}(\xi-\eta-\sigma)^{k-2}\wh{f}(\xi-\eta)\ol{\wh{g}(\xi-\eta-\sigma)}\wh{f}(\xi-\sigma)\ol{\wh{h}(\xi)}\,d\eta\, d\sigma\, d\xi, \\
&\int_{\R^3} e^{2it\eta\sigma}m(\xi,\sigma)(2i)^{-1}(\xi-\eta-\sigma)^{k-2}\p_t\Big[ \wh{f}(\xi-\eta)\ol{\wh{g}(\xi-\eta-\sigma)}\wh{f}(\xi-\sigma)\ol{\wh{h}(\xi)}\,\Big] \,d\eta\, d\sigma\, d\xi .
\end{align*}
The bound of the latter can be obtained as before: we use the factor $-2\ve \| e^{\rho^\pm[u]}Q_\pm \p_x^kJu\|_{L^2}^2$ to absorb the terms with $\| \p_x^kJu\|_{L^2}$ appearing after substitution of the equation to $\p_tg$ and $\p_th$.
Thus we obtain
\begin{align*}
&\frac{d}{dt}\Big( \| e^{\rho^\pm[u]}Q_\pm \p_x^{k-1}Ju\|_{L^2}^2- \mathcal I_{k-1,1}(t)\Big) \leq C(\| u\|_{H^2},\| Ju\|_{H^1})\Big( \| Ju\|_{H^{k-1}}^2+\| u\|_{H^{k}}^2\Big) ,
\end{align*}
where
\begin{align*}
\mathcal I_{k-1,1}(t) := 2{\rm Re} \int_{\R^3} e^{2it\eta\sigma}m(\xi,\sigma)(2i)^{-1}(\xi-\eta-\sigma)^{k-2}\wh{f}(\xi-\eta)\ol{\wh{g}(\xi-\eta-\sigma)}\wh{f}(\xi-\sigma)\ol{\wh{h}(\xi)}\,d\eta\, d\sigma\, d\xi.
\end{align*}
By interpolation inequalities, the integral term is estimated as
\begin{align*}
|\mathcal I_{k-1,1}(t)|&\le Ce^{C\| u\|_{L^2}^2}\| u\|_{H^1}^2\| \p_x^{k-2}Ju\|_{L^2}\| e^{\rho^\pm[u]}Q_\pm \p_x^{k-1}Ju\|_{L^2} \\
&\leq \frac14\| e^{\rho^\pm[u]}Q_\pm \p_x^{k-1}Ju \|_{L^2}^2+C(\| u\|_{H^1})\| Ju\|_{L^2}^{\frac{2}{k-1}}\| \p_x^{k-1}Ju\|_{L^2}^{\frac{2(k-2)}{k-1}} \\
&\leq \frac12\| e^{\rho^\pm[u]}Q_\pm \p_x^{k-1}Ju \|_{L^2}^2+C(\| u\|_{H^1})\| Ju\|_{L^2}^2 .
\end{align*}
Then we apply $H^2$, $H^{k}$ and $H^{1,1}$ a priori estimates in Propositions~\ref{prop:H2apriori}--\ref{prop:H11apriori}:
\begin{gather*}
\frac{d}{dt}\Big( \| e^{\rho^\pm[u(t)]}Q_\pm \p_x^{k-1}J(t)u(t)\|_{L^2}^2-\mathcal I_{k-1,1}(t) \Big) \leq C(\| \phi\|_{H^2\cap H^{1,1}})\Big( \| J(t)u(t)\|_{H^{k-1}}^2+\| \phi\|_{H^{k}}^2\Big) ,
\end{gather*}
and
\begin{gather*}
| \mathcal I_{k-1,1}(t) | \leq \frac12\| e^{\rho^\pm[u(t)]}Q_\pm \p_x^{k-1}J(t)u(t) \|_{L^2}^2+C(\| \phi\|_{H^2})\| \phi\|_{H^2\cap H^{1,1}}^2.
\end{gather*}
Hence, 
\[ Y_k(t):=C(\| \phi\|_{H^2\cap H^{1,1}})\Big\{ \| e^{\rho^\pm[u(t)]}Q_\pm \p_x^{k-1}J(t)u(t)\|_{L^2}^2 -\mathcal I_{k-1,1}(t) + C(\| \phi \|_{H^2})\| \phi\|_{H^2\cap H^{1,1}}^2 \Big\} \]
satisfies
\[ \| J(t)u(t)\|_{H^{k-1}}^2\leq Y_k(t)\leq C(\| \phi\|_{H^2\cap H^{1,1}})\Big( \| J(t)u(t)\|_{H^{k-1}}^2+\| \phi\|_{H^2\cap H^{1,1}}^2\Big) \]
and 
\[ \frac{d}{dt}Y_k(t)\leq C(\| \phi\|_{H^2\cap H^{1,1}})\Big( Y_k(t)+\| \phi\|_{H^{k}}^2\Big) \]
for $0\leq t\leq \min \big\{ T, T_0(\| \phi\|_{H^2}) \big\}$.
By Gronwall's inequality, we obtain \eqref{apriori-lwp4}. 
\end{proof}


\section{A priori bounds for difference}
\label{sec:diff}

Finally, we establish a priori bounds of the difference of two solutions. 
We need the difference bounds for the uniqueness and continuous dependence.

\begin{prop}[$L^2$ a priori bound for difference]\label{prop:L2diff}
Let $0<\ve_2 \le \ve_1 <1$, and suppose that $u^{\ve_1},u^{\ve_2}\in C([0,T],H^5)$
be solutions to \eqref{kdnls-ve} on $[0,T]$ with the viscosity coefficient $\ve_1,\ve_2$ and the initial data $\phi^{\ve_1},\phi^{\ve_2}$, respectively.
Then, there exists $C>0$ depending on $M:=\max\limits_{j=1,2}\| u^{\ve_j}\|_{L^\infty_TH^2}$ and $T$ such that
\begin{equation}\label{apriori-diff1}
\| u^{\ve_1}-u^{\ve_2}\|_{L^\infty_TL^2}^2\leq C\Big( \| \phi^{\ve_1}-\phi^{\ve_2}\|_{L^2}^2+\ve_1^2\Big) .
\end{equation}
\end{prop}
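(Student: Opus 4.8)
The plan is to run an $L^2$ energy estimate for $w=u^{\ve_1}-u^{\ve_2}$ from \eqref{eq:w}, but at the level of a frequency-decomposed, gauged energy rather than on $w$ directly. Using $w=P_{\le 1}w+Q_+w+Q_-w$ I would work with
\[ E(t):=\| P_{\le 1}w(t)\|_{L^2}^2+\sum_{\pm}\| e^{\rho^\pm[u^{\ve_2}(t)]}Q_\pm w(t)\|_{L^2}^2, \]
which by Lemma~\ref{lem:rho} satisfies $E(t)\sim_M\| w(t)\|_{L^2}^2$ (the constants depending on $M$ only through $e^{C\| u^{\ve_2}\|_{L^2}^2}$). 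The goal is the differential inequality $\frac{d}{dt}E(t)\les_M E(t)+\ve_1^2$; then Gronwall on $[0,T]$ together with $E(0)\les_M\| \phi^{\ve_1}-\phi^{\ve_2}\|_{L^2}^2$ yields \eqref{apriori-diff1}.

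The low-frequency piece is routine. In $\frac{d}{dt}\| P_{\le 1}w\|_{L^2}^2$ the Schr\"odinger part vanishes after taking real parts, the viscosity part $-2\ve_2\| \p_xP_{\le 1}w\|_{L^2}^2$ is nonpositive, and the forcing $(\ve_1-\ve_2)\p_x^2u^{\ve_1}$ contributes at most $C(M)\ve_1\| w\|_{L^2}$. For the nonlinearity I would move the derivative off using that $P_{\le 1}\p_x$ has an $O(1)$ multiplier, so $\| P_{\le 1}\p_x\wt{\mathcal{N}}[u^{\ve_1},w]\|_{L^2}\les\| \wt{\mathcal{N}}[u^{\ve_1},w]\|_{L^2}\les_M\| w\|_{L^2}$ since $\wt{\mathcal{N}}$ is Lipschitz in $w$ with constant controlled by $M$. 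Thus $\frac{d}{dt}\| P_{\le 1}w\|_{L^2}^2\les_M\| w\|_{L^2}^2+\ve_1^2$.

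The heart is $\frac{d}{dt}\| e^{\rho^\pm[u^{\ve_2}]}Q_\pm w\|_{L^2}^2$, handled as in Proposition~\ref{prop:H2apriori}. Inserting $Q_\pm\p_t w$ from \eqref{eq:w} and \eqref{eq:wtN}, the term $\int\p_t(e^{2\rho^\pm[u^{\ve_2}]})|Q_\pm w|^2$ is bounded via Lemma~\ref{lem:rho}, and the viscosity yields $\ve_2\int\p_x^2(e^{2\rho^\pm})|Q_\pm w|^2-2\ve_2\| e^{\rho^\pm}Q_\pm\p_xw\|_{L^2}^2$, the last nonpositive term being available to absorb $\ve$-weighted high-derivative leftovers. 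The decisive point is that integrating the Schr\"odinger term by parts and using $\p_xe^{2\rho^\pm[u^{\ve_2}]}=\mp\beta|u^{\ve_2}|^2e^{2\rho^\pm[u^{\ve_2}]}$ produces exactly $-2{\rm Re}(\mp i\beta)\int e^{2\rho^\pm}|u^{\ve_2}|^2\p_xQ_\pm w\,\ol{Q_\pm w}$, which cancels the high-low part of the resonant nonlinear term $\beta Q_\pm[\cH(\p_xw\,\ol{u^{\ve_2}})u^{\ve_2}]$, since that part coincides with $\mp i\beta|u^{\ve_2}|^2\p_xQ_\pm w$ (using $Q_\pm\cH=\mp iQ_\pm$). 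The error in this cancellation is a commutator controlled by Lemma~\ref{lem:commutator}, and the remaining $\cH$-contribution is summed by the dyadic argument used for \eqref{H2-2}.

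It remains to dispose of the other derivative-bearing terms in \eqref{eq:wtN}. The diagonal term $\halbe(|u^{\ve_2}|^2)\p_xw$ has a real coefficient ($|u^{\ve_2}|^2$ and $\cH(|u^{\ve_2}|^2)$ are real), so after commuting $Q_\pm$ inside via Lemma~\ref{lem:commutator} one integration by parts leaves $\int\p_x(e^{2\rho^\pm}(2\alpha|u^{\ve_2}|^2+\beta\cH(|u^{\ve_2}|^2)))|Q_\pm w|^2\les_M\| w\|_{L^2}^2$, where one uses $\p_x\cH(|u^{\ve_2}|^2)=D_x(|u^{\ve_2}|^2)\in L^\infty$. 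The conjugate-derivative term $\halbe(u^{\ve_2}\p_x\ol{w})u^{\ve_2}$ is nonresonant: in the interaction variables $f=e^{-it\p_x^2}u^{\ve_2}$, $g=e^{-it\p_x^2}w$ its contribution carries the oscillatory factor $e^{2it\eta\sigma}$, so a normal-form argument as for \eqref{H2-3}, using $\eta\sigma\,e^{2it\eta\sigma}=(2i)^{-1}\frac{d}{dt}e^{2it\eta\sigma}$, recovers the lost derivative and produces terms bounded by $C(M)\| w\|_{L^2}^2$ (the $\p_tg$, $\p_th$ factors being controlled through the equation, Lemma~\ref{lem:rho}, and the dissipation absorbing the $\ve\| \p_x^2w\|$-type remainders). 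Finally $\wt{\cR}$ carries no derivative on $w$ and is directly $\les_M\| w\|_{L^2}^2$, while the forcing again gives $C(M)\ve_1\| w\|_{L^2}$. Summing the three pieces of $E$ gives $\frac{d}{dt}E\les_M E+\ve_1^2$, and Gronwall finishes the proof. The main obstacle is the resonant term $\beta\cH(\p_xw\,\ol{u^{\ve_2}})u^{\ve_2}$ for $\beta>0$, whose genuine derivative loss cannot be absorbed into $\| w\|_{L^2}^2$ and is removed only by the precise gauge cancellation described above.
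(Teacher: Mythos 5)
Your overall architecture (gauged, frequency-split $L^2$ energy; cancellation of the resonant part of $\beta\cH(\p_xw\,\ol{u^{\ve_2}})u^{\ve_2}$ against $\p_xe^{2\rho^\pm[u^{\ve_2}]}$; commutator and dyadic treatment of the remaining terms) matches the paper, but there is a genuine gap in your treatment of the conjugate-derivative term $\halbe(u^{\ve_2}\p_x\ol{w})u^{\ve_2}$, and it is precisely the point where the $L^2$ difference estimate differs structurally from the $H^2$ bound. In the $H^2$ case the symbol carries $(\xi-\eta-\sigma)^3$, so the identity \eqref{xi-decomposition} peels off a full square and the resonant factor $\eta\sigma$ appears with two derivatives left over to distribute; here the symbol is only $(\xi-\eta-\sigma)^1$, and \eqref{xi-decomposition} is not applicable. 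The paper instead writes $\xi(\xi-\eta-\sigma)=(\xi-\eta)(\xi-\sigma)-\eta\sigma$ and divides by $\xi$, which forces the output projection to be $Q_{\pm N}=P_\pm P_{>N}$ with a \emph{large} threshold $N$ rather than your $Q_\pm$ with threshold $1$. You do not say how you pass from $(\xi-\eta-\sigma)$ to $\eta\sigma$, and with threshold $1$ the division by $\xi$ yields no smallness.

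The reason the smallness is indispensable is the boundary term of the integration by parts in $t$. In the $H^2$ argument the analogous correction $\cI_2$ contains only one factor of the top-order quantity and is absorbed by Cauchy--Schwarz against the lower-order norm $\| u\|_{H^1}^6$. Here the correction $\mathcal{J}_0$ is quadratic in $w$ \emph{at the same ($L^2$) level as the energy itself} --- there is no lower-order norm of $w$ available --- so it can only be subtracted from the energy if it satisfies $|\mathcal{J}_0|\leq \frac{C_2(M)}{N}\| w\|_{L^2}^2$ with $N$ chosen so large (depending on $M$) that the modified functional $\wt{Z}=Z-C_1(M)\mathcal{J}_0$ remains equivalent to $Z$. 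Your claim that the normal form simply ``produces terms bounded by $C(M)\| w\|_{L^2}^2$'' conflates the time-derivative contributions (which may indeed be of that size) with the energy correction $\mathcal{J}_0$, which must be \emph{small relative to} $\| w\|_{L^2}^2$, not merely bounded by a multiple of it; with threshold $1$ the constant $C(M)$ in front of $\mathcal{J}_0$ can destroy the coercivity of the modified energy and the Gronwall argument does not close. To repair the proof, replace $Q_\pm$ by $Q_{\pm N}$, estimate the low frequencies by $\| P_{\leq N}w\|_{L^2}$ (with constants depending on $N$, which is harmless), and choose $N=N(M)$ at the end.
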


Note that the constant $C$ in Proposition \ref{prop:L2diff} does not depend on $\ve_1,\ve_2$.

\begin{proof}
We begin with the decomposition
\begin{align*}
 	\| w\|_{L^2}&\leq \| P_{\leq N}w\|_{L^2}+\big\| e^{\rho^\mp [u^{\ve_2}]}\big\|_{L^\infty} \big\| e^{\rho^\pm[u^{\ve_2}]}P_\pm P_{>N}w \big\|_{L^2}\\
&\leq \| P_{\leq N}w\|_{L^2}+e^{C\| u^{\ve_2}\|_{L^2}^2}\big\| e^{\rho^\pm[u^{\ve_2}]}Q_{\pm N}w \big\|_{L^2},
\end{align*}
where $N\gg 1$ is a dyadic number to be chosen later and we write $Q_{\pm N}:=P_\pm P_{>N}$.
In particular, 
\[ \| w(t)\|_{L^2}^2\leq Z(t):= 2\| P_{\leq N}w(t)\|_{L^2}^2+C_1(M)\| e^{\rho^\pm[u^{\ve_2}(t)]}Q_{\pm N}w(t)\|_{L^2}^2 ,\qquad t\in [0,T].\]

Let us consider $\frac{d}{dt}Z(t)$.
The low frequency part is easily treated, since the derivative can be replaced with the factor $N$.
Using the equation \eqref{eq:w} for $w$, we have
\[ \frac{d}{dt}\| P_{\leq N}w(t)\|_{L^2}^2\leq \ve_1^2\| \p_x^2u^{\ve_1}(t)\|_{L^2}^2+C(M,N)\| w(t)\|_{L^2}^2\leq C(M,N)\Big( \| w(t)\|_{L^2}^2+\ve_1^2\Big) .\]
For the energy estimate of $\| e^{\rho^\pm[u^{\ve_2}(t)]}Q_{\pm N}w(t)\|_{L^2}^2$, we begin with
\begin{align}
&\frac{d}{dt}\| e^{\rho^\pm[u^{\ve_2}(t)]}Q_{\pm N}w(t)\|_{L^2}^2 \notag \\
&\quad =\int_{\R}\p_t\big( e^{2\rho^\pm[u^{\ve_2}]}\big) |Q_{\pm N}w|^2\,dx +2{\rm Re}\int_{\R} e^{2\rho^\pm[u^{\ve_2}]}Q_{\pm N}\p_t w\ol{Q_{\pm N}w}\,dx \notag \\
&\quad =\int_{\R}\p_t\big( e^{2\rho^\pm[u^{\ve_2}]}\big) |Q_{\pm N}w|^2\,dx -2{\rm Re}\,(\mp i\beta) \int_{\R} e^{2\rho^\pm[u^{\ve_2}]}|u^{\ve_2}|^2 Q_{\pm N}\p_x w\ol{Q_{\pm N}w}\,dx \notag \\
&\qquad + \ve_2 \int \p_x^2\big( e^{2\rho^\pm[u^{\ve_2}]}\big) |Q_{\pm N}w|^2\,dx -2\ve_2 \| e^{\rho^\pm[u^{\ve_2}]}Q_{\pm N}\p_xw \|_{L^2}^2 \notag \\
&\qquad + 2 (\ve_1-\ve_2) {\rm Re}\int_{\R} e^{2\rho^\pm[u^{\ve_2}]}Q_{\pm N}\p_x^2u^{\ve_1} \ol{Q_{\pm N}w}\,dx +2{\rm Re}\int_{\R} e^{2\rho^\pm[u^{\ve_2}]}Q_{\pm N}\p_x\wt{\mathcal{N}}[u^{\ve_1},w]\ol{Q_{\pm N}w}\,dx \notag \\
&\quad \begin{aligned} 
&\leq C(M)\Big( \| w(t)\|_{L^2}^2+\ve_1^2\Big) -2\ve_2 \| e^{\rho^\pm[u^{\ve_2}]}Q_{\pm N}\p_x w\|_{L^2}^2 \\
&\quad +2{\rm Re}\int_{\R} e^{2\rho^\pm[u^{\ve_2}]}\Big( Q_{\pm N}\p_x\wt{\mathcal{N}}[u^{\ve_1},w] -(\mp i\beta) |u^{\ve_2}|^2Q_{\pm N}\p_xw\Big) \ol{Q_{\pm N}w}\,dx.
\end{aligned} \label{eq:dt-wpm}
\end{align}
By \eqref{eq:wtN}, we decompose the cubic terms in \eqref{eq:dt-wpm} as
\begin{align*}
&Q_{\pm N}\p_x\wt{\mathcal{N}}[u^{\ve_1},w] -(\mp i\beta) |u^{\ve_2}|^2Q_{\pm N}\p_xw \\
&\quad =Q_{\pm N} \Big( \big( 2\alpha |u^{\ve_2}|^2+\beta \cH(|u^{\ve_2}|^2)\big) \p_x w\Big) \\
&\qquad +\beta \Big[ Q_{\pm N}\big[u^{\ve_2} \cH( \ol{u^{\ve_2}}\p_xw)\big] -u^{\ve_2}\ol{u^{\ve_2}}Q_{\pm N}\p_x\cH w \Big]\\
&\qquad +Q_{\pm N}\Big( \halbe (u^{\ve_2}\p_x\ol{w})u^{\ve_2}\Big) \\
&\qquad +Q_{\pm N}\wt{\cR}.
\end{align*}
The $L^2$ norm of the last term $Q_{\pm N}\wt{\cR}$ can be estimated by $C(M)\| w\|_{L^2}$.
The other terms can be treated similarly to \eqref{H2-1}--\eqref{H2-3} in the proof of Proposition~\ref{prop:H2apriori}.
We only consider the contribution from the third term,
\[ 2{\rm Re}\int_{\R}e^{2\rho^\pm[u^{\ve_2}]}Q_{\pm N}\Big( \halbe (u^{\ve_2}\p_x\ol{w}) u^{\ve_2}\Big) \ol{Q_{\pm N}w}\,dx, \]
for which we use the normal form approach. 
As before, we write the integral in the following form:
\begin{equation}\label{eq:dt-z-0}
\int_{\R^3} e^{2it\eta\sigma}m_N(\xi,\sigma)(\xi-\eta-\sigma)\wh{f}^{\ve_2}(\xi-\eta)\ol{\wh{g}(\xi-\eta-\sigma)}\wh{f}^{\ve_2}(\xi-\sigma)\ol{\wh{h}(\xi)}\,d\eta\, d\sigma\, d\xi ,
\end{equation}
where $f^{\ve_2}:=e^{-it\p_x^2}u^{\ve_2}$, $g:=e^{-it\p_x^2}w$, $h:=e^{-it\p_x^2}(e^{2\rho^\pm[u^{\ve_2}]}Q_{\pm N}w)$ and
\[ m_N(\xi,\sigma):=c\,(\chi_\pm\varrho_{>N})(\xi) (\alpha -i\beta \sgn{\sigma}) .\]
We then use the relation
\[ \xi (\xi-\eta-\sigma)=(\xi-\eta)(\xi-\sigma)-\eta\sigma \]
to decompose \eqref{eq:dt-z-0}.
The contribution from the first term is easily estimated; notice that $|\xi|\geq N$ in the support of $m_N$.
By integration by parts, the second term gives 
\begin{align}
&\frac{d}{dt}\int_{\R^3} e^{2it\eta\sigma}m_N(\xi,\sigma)(2i)^{-1}\xi^{-1} \wh{f}^{\ve_2}(\xi-\eta)\ol{\wh{g}(\xi-\eta-\sigma)}\wh{f}^{\ve_2}(\xi-\sigma)\ol{\wh{h}(\xi)}\,d\eta\, d\sigma\, d\xi,\label{eq:dt-z-1} \\
&\int_{\R^3} e^{2it\eta\sigma}m_N(\xi,\sigma)(2i)^{-1}\xi^{-1} \p_t\Big[ \wh{f}^{\ve_2}(\xi-\eta)\ol{\wh{g}(\xi-\eta-\sigma)}\wh{f}^{\ve_2}(\xi-\sigma) \Big] \ol{\wh{h}(\xi)}\,d\eta\, d\sigma\, d\xi, \label{eq:dt-z-2}\\
&\int_{\R^3} e^{2it\eta\sigma}m_N(\xi,\sigma)(2i)^{-1}\xi^{-1}\wh{f}^{\ve_2}(\xi-\eta)\ol{\wh{g}(\xi-\eta-\sigma)}\wh{f}^{\ve_2}(\xi-\sigma)\ol{\p_t\wh{h}(\xi)}\,d\eta\, d\sigma\, d\xi .\label{eq:dt-z-3}
\end{align}
For \eqref{eq:dt-z-2}, since we estimate $h$ in $L^2$, it is essentially enough to show 
\[ \big\| \p_t\big( (f^{\ve_2})^2\ol{g}\big)\big\|_{H^{-1}}\leq C(M)\Big( \| w\|_{L^2}+\ve_1\Big) .\]
Using the Sobolev embedding $L^1\hookrightarrow H^{-1}$ and the product estimate $\| \varphi\psi\|_{H^{-1}}\les \| \varphi\|_{H^1}\| \psi\|_{H^{-1}}$, we see that
\begin{align*}
\big\| \p_t\big( (f^{\ve_2})^2\ol{g}\big)\big\|_{H^{-1}}&\les \| f^{\ve_2}(\p_tf^{\ve_2})\ol{g}\|_{L^1}+\| (f^{\ve_2})^2\ol{\p_tg}\|_{H^{-1}}\\
&\les \| u^{\ve_2}\|_{H^1}\| (\p_t-i\p_x^2)u^{\ve_2}\|_{L^2}\| w\|_{L^2}+\| u^{\ve_2}\|_{H^1}^2\| (\p_t-i\p_x^2)w\|_{H^{-1}}.
\end{align*}
It is now clear from the equations for $u^{\ve_2}$ and $w$ that these terms are bounded appropriately.
Finally, for \eqref{eq:dt-z-3} we readily show that
\[ \| \p_th\|_{H^{-1}}\leq C(M)\Big( \| w\|_{L^2}+\ve_1\Big) .\]
In fact, the left-hand side is bounded by
\[ \| (\p_t-i\p_x^2)e^{2\rho^\pm[u^{\ve_2}]}\|_{L^\infty}\| w\|_{L^2}+\| \p_xe^{2\rho^\pm[u^{\ve_2}]}\|_{W^{1,\infty}}\| \p_xw_\pm\|_{H^{-1}} +\| e^{2\rho^\pm[u^{\ve_2}]}\|_{W^{1,\infty}}\| (\p_t-i\p_x^2)w\|_{H^{-1}}. \]
Hence, taking \eqref{eq:dt-z-1} into account, we obtain
\[ \frac{d}{dt}\Big( \| e^{\rho^\pm[u^{\ve_2}(t)]}Q_{\pm N}w(t)\|_{L^2}^2 -\mathcal{J}_0(t)\Big) \leq C(M)\Big( \| w(t)\|_{L^2}^2+\ve_1^2\Big) ,\]
where
\begin{align*}
	\mathcal{J}_0(t):=2{\rm Re} \int_{\R^3} e^{2it\eta\sigma}m_N(\xi,\sigma)(2i)^{-1}\xi^{-1} \wh{f}^{\ve_2}(\xi-\eta)\ol{\wh{g}(\xi-\eta-\sigma)}\wh{f}^{\ve_2}(\xi-\sigma)\ol{\wh{h}(\xi)}\,d\eta\, d\sigma\, d\xi.
\end{align*}
Noticing that
\[ | \mathcal{J}_0(t) | \leq \frac{C_2(M)}{N}\| w(t)\|_{L^2}^2 \leq \frac{C_2(M)}{N}Z(t),\]
we set $N=N(C_1(M), C_2(M))\gg 1$ so that 
\[ \frac12Z(t)\leq \wt{Z}(t):=Z(t)-C_1(M)\mathcal{J}_0(t)\leq \frac32Z(t).\]
By the above estimates and $\| w(t)\|_{L^2}^2\leq Z(t)\leq 2\wt{Z}(t)$, we have
\[ \frac{d}{dt}\wt{Z}(t)\leq C(M)\Big( \wt{Z}(t)+\ve_1^2\Big), \]
which implies 
\[ \| w(t)\|_{L^2}^2\leq 2\Big( \frac32Z(0)+\ve_1^2\Big) e^{C(M)T}\quad (t\in [0,T]).\]
This completes the proof of \eqref{apriori-diff1}.
\end{proof}

\begin{cor}\label{cor:L2diff}
Let $T>0$, and suppose that $u_1,u_2\in C([0,T],H^2)$ are solutions to \eqref{kdnls} on $[0,T]$ with the initial data $\phi_1,\phi_2$, respectively.
Then, there exists $C>0$ depending on $M:=\max\limits_{j=1,2}\| u_j\|_{L^\infty_TH^2}$ and $T$ such that
\[ \| u_1-u_2\|_{L^\infty_TL^2}\leq C\| \phi_1-\phi_2\|_{L^2} .\]
\end{cor}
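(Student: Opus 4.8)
The corollary is the $\ve=0$ specialization of Proposition~\ref{prop:L2diff}, so the plan is to rerun the energy estimate of that proof with the viscosity switched off and with the regularized solutions replaced by the genuine $H^2$ solutions $u_1,u_2$. Setting $w:=u_1-u_2$, the difference solves \eqref{eq:w} with $\ve_1=\ve_2=0$ and $u^{\ve_j}$ replaced by $u_j$; in particular $(\p_t-i\p_x^2)w=\p_x\wt{\mathcal{N}}[u_1,w]$, and neither the source term $(\ve_1-\ve_2)\p_x^2u^{\ve_1}$ nor the dissipative terms are present. First I would record that the computation is legitimate at this regularity: since $u_j\in C([0,T],H^2)$ solves \eqref{kdnls}, we have $\p_tu_j=i\p_x^2u_j+\p_x\mathcal{N}[u_j]\in C([0,T],L^2)$, so $u_j\in C^1([0,T],L^2)\cap C([0,T],H^2)$, whence all the energy functionals below are $C^1$ in $t$ and the integrations by parts are justified.

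Next I would introduce, exactly as in the proof of Proposition~\ref{prop:L2diff}, a frequency cutoff $N\gg 1$, write $Q_{\pm N}:=P_\pm P_{>N}$, and form the gauged quantity
\[ Z(t):= 2\|P_{\leq N}w(t)\|_{L^2}^2 + C_1(M)\big\| e^{\rho^\pm[u_2(t)]}Q_{\pm N}w(t)\big\|_{L^2}^2 \geq \|w(t)\|_{L^2}^2, \]
with $\rho^\pm[u_2]$ as in \eqref{def:gauge}. The low-frequency part is controlled by $C(M,N)\|w\|_{L^2}^2$ (the derivative being replaced by $N$), now without any $\ve_1^2$ error. For the gauged high-frequency part I would differentiate in $t$ and decompose the cubic contribution $Q_{\pm N}\p_x\wt{\mathcal{N}}-(\mp i\beta)|u_2|^2Q_{\pm N}\p_xw$ into the three pieces of Proposition~\ref{prop:L2diff}: the symmetric term handled by Lemma~\ref{lem:commutator} and integration by parts, the Hilbert-transform commutator $Q_{\pm N}[u_2\cH(\ol{u_2}\p_xw)]-|u_2|^2Q_{\pm N}\cH\p_xw$ treated by dyadic decomposition (the high–low resonance cancelling), and the problematic term $\cH(u_2\p_x\ol{w})u_2$ handled by the normal-form reduction. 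With $\ve=0$ every one of these bounds reduces to $C(M)\|w\|_{L^2}^2$.

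The outcome is the differential inequality
\[ \frac{d}{dt}\Big( \big\| e^{\rho^\pm[u_2]}Q_{\pm N}w\big\|_{L^2}^2 - \mathcal{J}_0(t)\Big) \leq C(M)\|w\|_{L^2}^2, \]
where $\mathcal{J}_0$ is the same normal-form correction as in the proof and satisfies $|\mathcal{J}_0(t)|\leq C_2(M)N^{-1}\|w\|_{L^2}^2$. Fixing $N=N(C_1(M),C_2(M))\gg 1$ so that $\wt{Z}:=Z-C_1(M)\mathcal{J}_0$ obeys $\tfrac12 Z\leq \wt{Z}\leq \tfrac32 Z$, I get $\frac{d}{dt}\wt{Z}(t)\leq C(M)\wt{Z}(t)$; since $\|w(t)\|_{L^2}^2\leq Z(t)\leq 2\wt{Z}(t)$ and $\wt{Z}(0)\leq \tfrac32 Z(0)\leq C(M)\|\phi_1-\phi_2\|_{L^2}^2$, Gronwall's inequality yields $\|w(t)\|_{L^2}^2\leq C(M,T)\|\phi_1-\phi_2\|_{L^2}^2$, which is the claim.

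The only genuinely new point compared with Proposition~\ref{prop:L2diff} is regularity: there the solutions were taken in $C([0,T],H^5)$ so that every manipulation was transparently valid, whereas here we have only $C([0,T],H^2)$. I expect this to be the main (and in fact the sole) subtlety, but it is not an obstruction, because the whole estimate was engineered---via the gauge transformation and the normal form---so that no derivative beyond the second is ever required and every right-hand side is controlled through the $H^2$ norm, i.e.\ through $M$; combined with $u_j\in C^1([0,T],L^2)$ this makes the argument rigorous at $H^2$ regularity. If one prefers, the same bound follows by approximating $\phi_j$ with smooth data, invoking Proposition~\ref{prop:L2diff} with a common $\ve$, and letting $\ve\to 0$; the direct estimate above has the advantage of applying to \emph{any} pair of $H^2$ solutions, and hence of yielding their uniqueness at the same time.
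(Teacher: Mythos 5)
Your proposal is correct and follows essentially the same route as the paper: the authors likewise prove the corollary by repeating the energy argument of Proposition~\ref{prop:L2diff} for the original equation (i.e., with $\ve=0$), justifying the computations at $H^2$ regularity precisely by the observation that $u\in C_TH^2$ forces $\p_tu\in C_TL^2$ via the equation. You simply spell out the details that the paper omits.
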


\begin{proof}
In Proposition~\ref{prop:L2diff}, we have assumed the regularity $C_TH^5$ to rigorously justify the calculations. 
In fact, the argument may extend to general solutions in $C_TH^2$, since $u\in C_TH^2$ implies $\p_tu\in C_TL^2$ by the equation. 
The claim is shown by repeating the same argument but for the original equation \eqref{kdnls}. We omit the details.
\end{proof}

\begin{prop}[$H^2$ a priori bound for difference]\label{prop:H2diff}
Assume the hypotheses in Proposition~\ref{prop:L2diff}.  Let $u^{\ve_1},u^{\ve_2}\in C([0,T],H^9)$.
Then, there exists $C>0$ depending on $M_0:=\max\limits_{j=1,2}\| \phi^{\ve_j}\|_{H^2}$ but not on $\ve_1,\ve_2$ such that
\begin{equation}\label{apriori-diff2}
\| u^{\ve_1}(t)-u^{\ve_2}(t)\|_{H^2}^2\leq C\Big( \| \phi^{\ve_1}-\phi^{\ve_2}\|_{H^2}^2+\ve_1^2\big( 1+\| \phi^{\ve_1}\|_{H^4}^2+\| \phi^{\ve_1}\|_{H^3}^4\big) +\| \phi^{\ve_1}\|_{H^3}^4\| \phi^{\ve_1}-\phi^{\ve_2}\|_{L^2}^2 \Big) 
\end{equation}
for $0\leq t \leq  \min \big\{ T,T_0(M_0) \big\}$, where $T_0:\R_+\to (0,1]$ is the function given in Proposition~\ref{prop:H2apriori}.
\end{prop}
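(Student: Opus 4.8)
The plan is to run the argument of Proposition~\ref{prop:H2apriori} on the difference equation \eqref{eq:w}, gauging with respect to the second solution as in Proposition~\ref{prop:L2diff} and importing the $L^2$ difference bound \eqref{apriori-diff1} together with the higher-order bounds of Proposition~\ref{prop:Hkapriori} to absorb the terms on which only low-regularity control of $w$ is available. Throughout I work on $[0,\min\{T,T_0(M_0)\}]$, where by Propositions~\ref{prop:H2apriori} and \ref{prop:Hkapriori} all the norms $\|u^{\ve_j}(t)\|_{H^2}$, $\|u^{\ve_1}(t)\|_{H^3}$, $\|u^{\ve_1}(t)\|_{H^4}$ are controlled by $C(M_0)$ times the corresponding initial norms, uniformly in $\ve_1,\ve_2$. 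Using $e^{\rho^\pm[u^{\ve_2}]}$ for $Q_\pm w$, the relation
\[ \p_x^2 w = P_{\le 1}\p_x^2 w + e^{\rho^\mp[u^{\ve_2}]}e^{\rho^\pm[u^{\ve_2}]}Q_\pm \p_x^2 w \]
and Lemma~\ref{lem:rho} reduce the problem to controlling $\|P_{\le 1}\p_x^2 w\|_{L^2}^2$ (elementary, the derivative being a harmless factor) and $\|e^{\rho^\pm[u^{\ve_2}]}Q_\pm\p_x^2 w\|_{L^2}^2$, the latter being the only place with genuine derivative loss; the $L^2$ level is provided directly by Proposition~\ref{prop:L2diff} and the $\dot H^1$ level is treated by an entirely analogous but milder gauged estimate, so I describe only the $\dot H^2$ piece.

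First I would differentiate $\|e^{\rho^\pm[u^{\ve_2}]}Q_\pm\p_x^2 w\|_{L^2}^2$ in $t$ using \eqref{eq:w}, exactly as in \eqref{eq:dt-vpm}. The gauge is designed so that the resonant part of $Q_\pm\p_x^3\wt{\mathcal N}[u^{\ve_1},w]$ cancels against the $(\mp i\beta)|u^{\ve_2}|^2 Q_\pm\p_x^3 w$ coming from $\p_t e^{2\rho^\pm[u^{\ve_2}]}$, via the second form of \eqref{eq:wtN}. The surviving cubic terms split, just as in \eqref{H2-1}--\eqref{H2-3}, into: a commutator term $\bigl[Q_\pm,2\alpha|u^{\ve_2}|^2+\beta\cH(|u^{\ve_2}|^2)\bigr]\p_x^3 w$, handled by Lemma~\ref{lem:commutator} and integration by parts; a Hilbert-transform near-cancellation term, handled by the dyadic decomposition used for \eqref{H2-2}; and the genuinely derivative-losing piece $Q_\pm\bigl(\halbe(u^{\ve_2}\p_x^3\ol w)u^{\ve_2}\bigr)$, which I would treat by the normal-form argument of \eqref{eq:esti-r1}--\eqref{eq:esti-r1-3} and the algebraic identity \eqref{xi-decomposition}, now with $f^{\ve_2}=e^{-it\p_x^2}u^{\ve_2}$, $g=e^{-it\p_x^2}w$, and $h=e^{-it\p_x^2}\bigl[e^{2\rho^\pm[u^{\ve_2}]}Q_\pm\p_x^2 w\bigr]$. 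The resulting boundary term (the analogue of $\cI_2$ in \eqref{est:H2energy-X}) carries one fewer derivative and is absorbed with a factor $\tfrac12$ by Young's inequality, while substitution of \eqref{eq:w} into $\p_t g$ and $\p_t h$ produces $\ve_2\|\p_x^3 w\|_{L^2}$-type contributions that are absorbed by the dissipative gain $-2\ve_2\|e^{\rho^\pm[u^{\ve_2}]}Q_\pm\p_x^3 w\|_{L^2}^2$.

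The two features absent from Proposition~\ref{prop:H2apriori} are the inhomogeneous forcing $(\ve_1-\ve_2)\p_x^2 u^{\ve_1}$ and the remainder $\wt{\cR}$. For the forcing, pairing the appropriate derivative against $Q_\pm\p_x^2 w$ and applying Young's inequality yields a Gronwall term plus $\ve_1^2\|u^{\ve_1}\|_{H^4}^2$ (and, from the lower-order and low-frequency pieces, a bare $\ve_1^2 C(M_0)$); invoking Proposition~\ref{prop:Hkapriori} to replace $\|u^{\ve_1}(t)\|_{H^4}$, $\|u^{\ve_1}(t)\|_{H^3}$ by $C(M_0)\|\phi^{\ve_1}\|_{H^4}$, $C(M_0)\|\phi^{\ve_1}\|_{H^3}$ produces the term $\ve_1^2\bigl(1+\|\phi^{\ve_1}\|_{H^4}^2+\|\phi^{\ve_1}\|_{H^3}^4\bigr)$. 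Every summand of $\wt{\cR}$ carries a derivative on a factor $u^{\ve_1}$, so by the Fourier counting of Lemma~\ref{lem:int} its $\dot H^2$ pairing is bounded by $\|u^{\ve_1}\|_{H^3}^2\|w\|_{L^2}\|w\|_{H^2}$; Young's inequality then splits this into a Gronwall term $\|w\|_{H^2}^2$ and a residual $\|u^{\ve_1}\|_{H^3}^4\|w\|_{L^2}^2$, into which I insert the $L^2$ difference bound \eqref{apriori-diff1}, $\|w\|_{L^2}^2\les_{M_0}\|\phi^{\ve_1}-\phi^{\ve_2}\|_{L^2}^2+\ve_1^2$, yielding precisely the cross term $\|\phi^{\ve_1}\|_{H^3}^4\|\phi^{\ve_1}-\phi^{\ve_2}\|_{L^2}^2$ and a further $\ve_1^2\|\phi^{\ve_1}\|_{H^3}^4$. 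Collecting everything into a modified energy $\wt Z_2(t)$ (the low-frequency and gauged $\dot H^{\le 2}$ functionals with the normal-form boundary corrections subtracted) satisfying $\|w\|_{H^2}^2\les_{M_0}\wt Z_2\les_{M_0}\|w\|_{H^2}^2$ and
\[ \frac{d}{dt}\wt Z_2(t)\les_{M_0}\wt Z_2(t)+\ve_1^2\bigl(1+\|\phi^{\ve_1}\|_{H^4}^2+\|\phi^{\ve_1}\|_{H^3}^4\bigr)+\|\phi^{\ve_1}\|_{H^3}^4\|\phi^{\ve_1}-\phi^{\ve_2}\|_{L^2}^2, \]
I would conclude \eqref{apriori-diff2} by Gronwall's inequality. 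The main obstacle is the two-tier bookkeeping: the derivative loss, which falls on $w$, must be removed by the gauge and normal form exactly as in Proposition~\ref{prop:H2apriori}, whereas the remainder and viscosity errors—on which only $L^2$ control of $w$ is available—must be split by Young's inequality so that their top-order part feeds the Gronwall loop while the residual is closed by \eqref{apriori-diff1}, all with constants depending only on $M_0$ and uniformly in $\ve_1,\ve_2$.
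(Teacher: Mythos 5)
Your proposal follows essentially the same route as the paper's proof: gauge with $e^{\rho^\pm[u^{\ve_2}]}$ acting on $Q_\pm\p_x^2w$, cancel the resonant $\mp i\beta|u^{\ve_2}|^2Q_\pm\p_x^3w$ term against the time derivative of the gauge, split the remaining cubic terms into commutator, near-cancellation, and normal-form pieces, absorb the viscosity errors into the dissipative gain, control the forcing via $\ve_1^2\|\phi^{\ve_1}\|_{H^4}^2$ and the remainder via Young plus the $L^2$ difference bound \eqref{apriori-diff1}, and close with a modified energy and Gronwall. The only cosmetic discrepancy is your intermediate bound $\|u^{\ve_1}\|_{H^3}^2\|w\|_{L^2}\|w\|_{H^2}$ for the $\wt{\cR}$ pairing, where the paper uses $\|\p_x^3u^{\ve_1}\|_{L^2}\|(u^{\ve_1},w)\|_{H^1}\|w\|_{H^1}$ together with the interpolation $\|w\|_{H^1}\les\|w\|_{L^2}^{1/2}\|w\|_{H^2}^{1/2}$; both lead to the same residual $\|\phi^{\ve_1}\|_{H^3}^4\|w\|_{L^2}^2+\|w\|_{H^2}^2$ after Young's inequality.
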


\begin{proof}
Proof follows a similar approach to that of Proposition~\ref{prop:L2diff}, with slight modifications.
We also note that we can use a priori bounds \eqref{apriori-lwp1} and \eqref{apriori-lwp2} up to $H^4$ as well as \eqref{apriori-diff1} under the assumption $u^{\ve_j}\in C([0,T],H^9)$.
We first estimate $\| w\|_{H^2}$ as 
\begin{align*}
\| w(t)\|_{H^2}^2&\leq C\Big( \| e^{\rho^\mp[u^{\ve_2}(t)]}\|_{L^\infty}^2\| e^{\rho^\pm[u^{\ve_2}(t)]}Q_\pm \p_x^2w(t)\|_{L^2}^2+\| w(t)\|_{L^2}^2\Big) \\
&\leq C(M_0) \Big( \| e^{\rho^\pm[u^{\ve_2}(t)]}Q_\pm \p_x^2w(t)\|_{L^2}^2+\|\phi^{\ve_1}-\phi^{\ve_2}\|_{L^2}^2+\ve_1^2 \Big). 
\end{align*}
Following the argument in \eqref{eq:dt-wpm}, we see that
\begin{align*}
\frac{d}{dt}\| e^{\rho^\pm[u^{\ve_2}(t)]}Q_\pm \p_x^2w(t)\|_{L^2}^2
&\leq C(M_0)\Big( \| \p_x^2w(t)\|_{L^2}^2+\ve_1^2\| \phi^{\ve_1}\|_{H^4}^2\Big) -2\ve_2\| e^{\rho^\pm[u^{\ve_2}(t)]}Q_\pm \p_x^3w(t)\|_{L^2}^2\\
&\quad +2{\rm Re}\int_{\R}e^{2\rho^\pm[u^{\ve_2}]}\Big( Q_\pm\p_x^3\wt{\mathcal{N}}[u^{\ve_1},w]-(\mp i\beta )|u^{\ve_2}|^2Q_\pm \p_x^3w\Big) \ol{Q_\pm\p_x^2w}\,dx.
\end{align*}
Among the cubic terms in $Q_\pm\p_x^3\wt{\mathcal{N}}[u^{\ve_1},w]-(\mp i\beta )|u^{\ve_2}|^2Q_\pm \p_x^3w$, we only consider those containing $\p_x^3w$.
Indeed, the $L^2$ norm of the other terms can be bounded by either
\[ \| (u^{\ve_1},w)\|_{H^2}^2\| w\|_{H^2}\leq C(M_0)\| w\|_{H^2}\]
or
\begin{align*}
\| \p_x^3u^{\ve_1}\|_{L^2}\| (u^{\ve_1},w)\|_{H^1}\| w\|_{H^1}&\leq C(M_0)\| \phi^{\ve_1}\|_{H^3}\| w\|_{L^2}^{\frac12}\| w\|_{H^2}^{\frac12}.
\end{align*}
The former is acceptable, while the latter gives the bound
\[ C(M_0)\| \phi^{\ve_1}\|_{H^3}\| w\|_{L^2}^{\frac12}\| w\|_{H^2}^{\frac32}\leq C(M_0)\Big( \| \phi^{\ve_1}\|_{H^3}^4\| w\|_{L^2}^2+\| w\|_{H^2}^2\Big) .\]

By a calculation similar to \eqref{eq:wtN}, the cubic terms with $\p_x^3w$ is written as
\begin{align*}
&Q_\pm \Big( \big( 2\alpha |u^{\ve_2}|^2+\beta \cH(|u^{\ve_2}|^2)\big) \p_x^3w\Big) +\beta \Big[ Q_\pm \big[u^{\ve_2} \cH( \ol{u^{\ve_2}}\p_x^3w)\big] -u^{\ve_2}\ol{u^{\ve_2}}Q_{\pm N}\p_x^3\cH w \Big] \\
&\quad +Q_\pm \Big( \halbe (u^{\ve_2}\p_x^3\ol{w})u^{\ve_2}\Big) ,
\end{align*}
and we again focus on the last term.
By the normal form approach, we get the terms
\begin{gather*}
\frac{d}{dt}\int_{\R^3} e^{2it\eta\sigma}m(\xi,\sigma)(2i)^{-1}(\xi-\eta-\sigma)\wh{f}^{\ve_2}(\xi-\eta)\ol{\wh{g}(\xi-\eta-\sigma)}\wh{f}^{\ve_2}(\xi-\sigma)\ol{\wh{h}(\xi)}\,d\eta\,d\sigma\,d\xi, \\
\int_{\R^3} e^{2it\eta\sigma}m(\xi,\sigma)(2i)^{-1}(\xi-\eta-\sigma) \p_t\Big[ \wh{f}^{\ve_2}(\xi-\eta)\ol{\wh{g}(\xi-\eta-\sigma)}\wh{f}^{\ve_2}(\xi-\sigma)\ol{\wh{h}(\xi)} \Big] \,d\eta\,d\sigma\,d\xi ,
\end{gather*}
where the notation is the same as in \eqref{eq:esti-r1}, \eqref{eq:dt-z-0} apart from $h:=e^{-it\p_x^2}(e^{\rho^\pm[u^{\ve_2}]}Q_\pm \p_x^2w)$.
The latter case is bounded by
\[ C(M_0)\Big( \| f^{\ve_2}\|_{H^1}\| \p_tf^{\ve_2}\|_{L^2}\| g\|_{H^2}\| h\|_{L^2}+\| f^{\ve_2}\|_{H^1}^2\| \p_tg\|_{H^1}\| h\|_{L^2}+\| f^{\ve_2}\|_{H^2}^2\| g\|_{H^2}\| \p_th\|_{H^{-1}}\Big) .\]
After estimating $\p_tg$ and $\p_th$, several third-order terms appear. 
Indeed, we see that
\begin{align*}
\| \p_tg\|_{H^1}&\leq \ve_2\| \p_x^3w\|_{L^2}+\ve_1\| \p_x^3u^{\ve_1}\| _{L^2}+C(M_0)\| w\|_{H^2},\\
\| \p_th\|_{H^{-1}}&\leq C(M_0)\Big( \ve_2\| \p_x^3w\|_{L^2}+\ve_1\| \p_x^3u^{\ve_1}\| _{L^2}+\| w\|_{H^2}\Big) .
\end{align*}
Inserting these estimates, we obtain the bound
\begin{align*}
&C(M_0)\Big( \| w\|_{H^2}^2+\ve_1\| \p_x^3u^{\ve_1}\|_{L^2}\| w\|_{H^2}+\ve_2\| \p_x^3w\|_{L^2}\| w\|_{H^2}\Big) \\
&\quad \leq C(M_0)\Big( \| w\|_{H^2}^2+\ve_1\| \p_x^3u^{\ve_1}\|_{L^2}\| w\|_{H^2}+\ve_2\| e^{\rho^\pm[u^{\ve_2}]}Q_\pm \p_x^3w\|_{L^2}\| w\|_{H^2}\Big) \\
&\quad \leq \ve_2\| e^{\rho^\pm[u^{\ve_2}]}Q_\pm \p_x^3w\|_{L^2}^2+C(M_0)\Big( \| w\|_{H^2}^2+\ve_1^2\| \phi^{\ve_1}\|_{H^3}^2\Big) .
\end{align*}
Note that the first term in the last line is absorbed into the viscosity term in the energy estimate.
We thus obtain
\begin{align*}
&\frac{d}{dt}\Big(\| e^{\rho^\pm[u^{\ve_2}(t)]}Q_\pm \p_x^2w(t)\|_{L^2}^2- \mathcal{J}_2(t)\Big) \leq C(M_0) \Big( \| w(t)\|_{H^2}^2+\ve_1^2\| \phi^{\ve_1}\|_{H^4}^2+\| \phi^{\ve_1}\|_{H^3}^4 \| w(t)\|_{L^2}^2\Big) ,
\end{align*}
where
\begin{align*}
\mathcal{J}_2(t):= 2{\rm Re} \int_{\R^3} e^{2it\eta\sigma}m(\xi,\sigma)(2i)^{-1}(\xi-\eta-\sigma) \wh{f}^{\ve_2}(\xi-\eta)\ol{\wh{g}(\xi-\eta-\sigma)}\wh{f}^{\ve_2}(\xi-\sigma)\ol{\wh{h}(\xi)}\,d\eta\, d\sigma\, d\xi.
\end{align*}
We  also estimate $\mathcal{J}_2(t)$, for $\mu>0$ arbitrarily small, as
\begin{align*}
|\mathcal{J}_2(t)|&\leq C(M_0)\| w\|_{H^1}\| w\|_{H^2}\leq C(M_0)\Big( \mu \| w\|_{H^2}^2+C_\mu \| w\|_{L^2}^2\Big) \\
&\leq C(M_0)\Big( \mu C(M_0)\| e^{\rho^\pm[u^{\ve_2}]}Q_\pm \p_x^2w\|_{L^2}^2+C_\mu \| w\|_{L^2}^2\Big) \\
&\leq \frac12 \| e^{\rho^\pm[u^{\ve_2}]}Q_\pm \p_x^2w\|_{L^2}^2+C(M_0)\| w\|_{L^2}^2.
\end{align*}
Hence, we set
\[ Z(t):=C(M_0)\Big( \| e^{\rho^\pm[u^{\ve_2}(t)]}Q_\pm \p_x^2w(t)\|_{L^2}^2-\mathcal{J}_2(t) +C(M_0)\big( \| \phi^{\ve_1}-\phi^{\ve_2}\|_{L^2}^2+\ve_1^2\big) \Big) ,\]
which satisfies
\[ \| w(t)\|_{H^2}^2\leq Z(t),\qquad Z(0)\leq C(M_0)\Big( \| \phi^{\ve_1}-\phi^{\ve_2}\|_{H^2}^2+\ve_1^2\Big) ,\]
and
\begin{align*}
	\frac{d}{dt}Z(t)&\leq C(M_0)\Big( \| w(t)\|_{H^2}^2+\ve_1^2\| \phi^{\ve_1}\|_{H^4}^2 +\| \phi^{\ve_1}\|_{H^3}^4 \big( \| \phi^{\ve_1}-\phi^{\ve_2}\|_{L^2}^2+\ve_1^2\big) \Big) \\
&\leq C(M_0)\Big( Z(t)+\ve_1^2\big( \| \phi^{\ve_1}\|_{H^4}^2+\| \phi^{\ve_1}\|_{H^3}^4\big) +\| \phi^{\ve_1}\|_{H^3}^4\| \phi^{\ve_1}-\phi^{\ve_2}\|_{L^2}^2\Big) .
\end{align*}
Therefore we obtain the desired bound by Gronwall's inequality.
\end{proof}

\begin{prop}[$H^{1,1}$ a priori bound for difference]\label{prop:H11diff}
Assume the hypotheses in Proposition~\ref{prop:L2diff}. Let $u^{\ve_1},u^{\ve_2}\in C([0,T],H^9\cap H^{8,1})$.
Then, there exists $C>0$ depending on 
$M_1:=\max\limits_{j=1,2}\| \phi^{\ve_j}\|_{H^2\cap H^{1,1}}$
but not on $\ve_1,\ve_2$ such that
\begin{align*}
\| J(t)\big[ u^{\ve_1}(t)-u^{\ve_2}(t)\big] \|_{H^1}^2\leq C\Big\{ \| \phi^{\ve_1}-\phi^{\ve_2}\|_{H^2\cap H^{1,1}}^2&+\ve_1^2\big( 1+\| \phi^{\ve_1}\|_{H^4\cap H^{3,1}}^2+\| \phi^{\ve_1}\|_{H^3\cap H^{2,1}}^4\big) \\
&+\| \phi^{\ve_1}\|_{H^3\cap H^{2,1}}^4\| \phi^{\ve_1}-\phi^{\ve_2}\|_{L^2}^2 \Big\} 
\end{align*}
for $0\leq t\leq \min \big\{ T,T_0(M_0) \big\}$.
\end{prop}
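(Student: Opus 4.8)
The plan is to combine the weighted a priori scheme of Proposition~\ref{prop:H11apriori} with the difference argument of Proposition~\ref{prop:H2diff}, working throughout with the equation \eqref{eq:Jw} for $Jw$, where $w=u^{\ve_1}-u^{\ve_2}$. As in Proposition~\ref{prop:H11apriori}, I first reduce the control of $\|J(t)w\|_{H^1}$ to that of $\|Jw\|_{L^2}$ and $\|e^{\rho^\pm[u^{\ve_2}]}Q_\pm\p_xJw\|_{L^2}$, via Lemma~\ref{lem:rho} and the gauge decomposition $\|\p_xJw\|_{L^2}\les\|Jw\|_{L^2}+e^{C\|u^{\ve_2}\|_{L^2}^2}\|e^{\rho^\pm[u^{\ve_2}]}Q_\pm\p_xJw\|_{L^2}$, using the gauge built from $u^{\ve_2}$ exactly as in Propositions~\ref{prop:L2diff} and \ref{prop:H2diff}. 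The final functional $Z(t)$ will be a combination of these two quantities, corrected by a normal-form boundary term $\mathcal J_{1,1}(t)$.

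For the $L^2$ part I pair \eqref{eq:Jw} with $Jw$: the viscosity term $\ve_2\p_x^2Jw$ contributes the good sign $-2\ve_2\|\p_xJw\|_{L^2}^2\le0$; the inhomogeneous terms $(\ve_1-\ve_2)\p_x^2Ju^{\ve_1}$ and $-2(\ve_1-\ve_2)\p_xu^{\ve_1}$ are bounded using $\ve_1-\ve_2\le\ve_1$ together with the higher-order a priori bounds \eqref{apriori-lwp2} and \eqref{apriori-lwp4} for $u^{\ve_1}$, producing the $\ve_1^2$-contributions; and the nonlinear term is estimated via the Leibniz rule for $J$, \eqref{eq:wtN}, and Lemma~\ref{lem:int}. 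For the gauged quantity I imitate \eqref{eq:dt-wpm}: the $\p_t$- and $\p_x^2$-derivatives of $e^{2\rho^\pm[u^{\ve_2}]}$ are controlled by Lemma~\ref{lem:rho}, and after using the Leibniz rule for $J$ (stated before \eqref{eq:Ju}), the top-order cubic contribution $Q_\pm\p_x^2J\wt{\mathcal N}-(\mp i\beta)|u^{\ve_2}|^2Q_\pm\p_x^2Jw$ splits, just as \eqref{eq:bound11-1}--\eqref{eq:bound11-3}, into (i) $(2\alpha|u^{\ve_2}|^2+\beta\cH(|u^{\ve_2}|^2))\p_x^2Jw$, handled by the commutator Lemma~\ref{lem:commutator} and integration by parts, (ii) the gauge-cancellation term $\beta[Q_\pm(u^{\ve_2}\cH(\ol{u^{\ve_2}}\p_x^2Jw))-|u^{\ve_2}|^2Q_\pm\cH\p_x^2Jw]$, in which the problematic high-low resonance is absent and which is summed dyadically, and (iii) the resonant term $\halbe(u^{\ve_2}\p_x^2\ol{Jw})u^{\ve_2}$, the analogue of \eqref{eq:bound11-3}, the only piece requiring the normal form.

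The heart of the proof, and the main obstacle, is the normal-form treatment of (iii). Writing it as the oscillatory integral with weight $(\xi-\eta-\sigma)^2$ in the variables $f^{\ve_2}=e^{-it\p_x^2}u^{\ve_2}$, $g=e^{-it\p_x^2}Jw$ and $h=e^{-it\p_x^2}(e^{2\rho^\pm[u^{\ve_2}]}Q_\pm\p_xJw)$, I apply \eqref{xi-decomposition}; the first three summands move one derivative onto a lower-order factor and are estimated directly, while the resonant $\eta\sigma$ piece is integrated by parts in $t$ through $\eta\sigma e^{2it\eta\sigma}=(2i)^{-1}\frac{d}{dt}e^{2it\eta\sigma}$. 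This yields a total time derivative $\frac{d}{dt}\mathcal J_{1,1}(t)$, defining the correction, together with two remainders in which $\p_t$ falls on $\wh f^{\ve_2}\ol{\wh g}\wh f^{\ve_2}$ and on $\wh h$. The delicate point, exactly as in Propositions~\ref{prop:L2diff} and \ref{prop:H2diff}, is that inserting \eqref{eq:w} and \eqref{eq:Jw} into $\p_tg$ and $\p_th$ (via Lemma~\ref{lem:dt-bound} and $e^{it\p_x^2}\p_th=(\p_t-i\p_x^2)(e^{2\rho^\pm[u^{\ve_2}]}Q_\pm\p_xJw)$) creates top-order terms: those carrying $\ve_2$ are reabsorbed by the dissipative factor $-2\ve_2\|e^{\rho^\pm[u^{\ve_2}]}Q_\pm\p_x^2Jw\|_{L^2}^2$, whereas those carrying $\ve_1$ must be bounded through the higher $H^4\cap H^{3,1}$ a priori norms of $u^{\ve_1}$, yielding precisely the $\ve_1^2(1+\|\phi^{\ve_1}\|_{H^4\cap H^{3,1}}^2+\|\phi^{\ve_1}\|_{H^3\cap H^{2,1}}^4)$ terms. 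Careful book-keeping of the commutator relations between $J$ and $\p_x,\p_x^2$ is what makes this $\ve_1/\ve_2$ separation clean.

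To conclude, I bound the correction by $|\mathcal J_{1,1}(t)|\le\frac12\|e^{\rho^\pm[u^{\ve_2}]}Q_\pm\p_xJw\|_{L^2}^2+C(M_1)\|Jw\|_{L^2}^2$, as the analogous $\mathcal J$-terms were bounded before, so the modified energy is comparable to $\|Jw\|_{H^1}^2$. Invoking the a priori bounds of Propositions~\ref{prop:H2apriori}--\ref{prop:Hk1apriori} to replace all $u^{\ve_1}$- and $u^{\ve_2}$-norms by the data constants, and substituting the difference bounds \eqref{apriori-diff1} and \eqref{apriori-diff2} wherever $\|w\|_{L^2}$ or $\|w\|_{H^2}$ occurs, I obtain a differential inequality $\frac{d}{dt}Z(t)\le C(M_1)(Z(t)+\text{data})$ on $[0,\min\{T,T_0(M_0)\}]$; Gronwall's inequality then gives the claim. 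As in Proposition~\ref{prop:H2diff}, the term $\|\phi^{\ve_1}\|_{H^3\cap H^{2,1}}^4\|\phi^{\ve_1}-\phi^{\ve_2}\|_{L^2}^2$ arises from the sub-leading pieces in which a derivative hits $u^{\ve_1}$ and interpolation is used.
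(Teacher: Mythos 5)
Your proposal follows essentially the same route as the paper's proof: the gauge decomposition built from $u^{\ve_2}$, the energy estimate for $\| e^{\rho^\pm[u^{\ve_2}]}Q_\pm\p_xJw\|_{L^2}^2$ with the cubic terms split as in \eqref{eq:bound11-1}--\eqref{eq:bound11-3}, the normal form with correction $\mathcal J_{1,1}$ for the resonant piece $\halbe(u^{\ve_2}\p_x^2\ol{Jw})u^{\ve_2}$, absorption of the $\ve_2$ top-order terms by the dissipative factor and of the $\ve_1$ terms by the higher-order a priori bounds, and Gronwall after inserting \eqref{apriori-diff1}--\eqref{apriori-diff2}. This is correct and matches the paper's argument in all essential respects.
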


\begin{proof}
By the assumption $u^{\ve_j}\in C([0,T],H^9\cap H^{8,1})$, we can use a priori bounds obtained in previous propositions:
\eqref{apriori-lwp1} and \eqref{apriori-lwp3}, \eqref{apriori-lwp2} and \eqref{apriori-lwp4} up to $k=4$, \eqref{apriori-diff1} and \eqref{apriori-diff2}.
Since the strategy is exactly the same as the previous proposition, we skip the detailed argument and only give the resulting estimates.   
We begin with
\[ \| J(t)w(t)\|_{H^1}^2\les_{M_0}\| J(t)w(t)\|_{L^2}^2+\| e^{\rho^\pm[u^{\ve_2}(t)]}Q_\pm\p_xJ(t)w(t)\|_{L^2}^2,\]
and by the equation \eqref{eq:Jw} for $Jw$,
\[ \frac{d}{dt}\|J(t)w(t)\|_{L^2}^2\les_{M_1}\| J(t)w(t)\|_{H^1}^2+\| w(t)\|_{H^1}^2+\ve_1^2.\]
Next, we see that
\begin{align*}
&\frac{d}{dt}\| e^{\rho^\pm[u^{\ve_2}]}Q_\pm\p_xJw\|_{L^2}^2\\
&\quad \leq -2\ve_2\| e^{\rho^\pm[u^{\ve_2}]}Q_\pm\p_x^2Jw\|_{L^2}^2+C(M_1)\Big( \| Jw\|_{H^1}^2+\|w\|_{H^2}^2+\ve_1^2\|\phi^{\ve_1}\|_{H^4\cap H^{3,1}}^2+\| \phi^{\ve_1}\|_{H^3\cap H^{2,1}}^4\| w\|_{L^2}^2\Big) \\
&\qquad + 2{\rm Re}\int_{\R}e^{2\rho^\pm[u^{\ve_2}]}Q_\pm \Big( \big( 2\alpha |u^{\ve_2}|^2+\beta \cH(|u^{\ve_2}|^2\big) \p_x^2Jw \Big) \ol{Q_\pm\p_xJw}\,dx \\
&\qquad + 2\beta {\rm Re}\int_{\R}e^{2\rho^\pm[u^{\ve_2}]}\Big( Q_\pm \big( u^{\ve_2}\cH(\ol{u^{\ve_2}} \p_x^2Jw)\big) -u^{\ve_2}\ol{u^{\ve_2}}Q_\pm \cH \p_x^2Jw \Big) \ol{Q_\pm\p_xJw}\,dx \\
&\qquad - 2{\rm Re}\int_{\R}e^{2\rho^\pm[u^{\ve_2}]}Q_\pm \Big( \halbe(u^{\ve_2}\p_x^2\ol{Jw})u^{\ve_2}\Big) \ol{Q_\pm\p_xJw}\,dx,
\end{align*}
and the first two integrals on the right-hand side are estimated by $C(M_0)\| Jw\|_{H^1}^2$ using commutator estimate, integration by parts and dyadic decomposition.
We apply normal form to the last integral, which is then bounded by
\begin{align*}
C(M_0)\| Jw\|_{H^1}^2
&+\frac{d}{dt}\Big[ 2{\rm Re} \int_{\R^3} e^{2it\eta\sigma}m(\xi,\sigma)(2i)^{-1}\wh{f}^{\ve_2}(\xi-\eta)\ol{\wh{g}(\xi-\eta-\sigma)}\wh{f}^{\ve_2}(\xi-\sigma)\ol{\wh{h}(\xi)}\,d\eta\,d\sigma\,d\xi \Big] \\
&-2{\rm Re} \int_{\R^3} e^{2it\eta\sigma}m(\xi,\sigma)(2i)^{-1}\p_t\Big[ \wh{f}^{\ve_2}(\xi-\eta)\ol{\wh{g}(\xi-\eta-\sigma)}\wh{f}^{\ve_2}(\xi-\sigma)\ol{\wh{h}(\xi)}\Big] \,d\eta\,d\sigma\,d\xi,
\end{align*}
where $g:=e^{-it\p_x^2}Jw$ and $h:=e^{-it\p_x^2}(e^{2\rho^\pm[u^{\ve_2}]}Q_\pm \p_xJw)$.
The last integral is bounded by
\[ \ve_2\| e^{\rho^\pm[u^{\ve_2}]}Q_\pm\p_x^2Jw\|_{L^2}^2+C(M_1)\Big( \| Jw\|_{H^1}^2+\| w\|_{H^2}^2+\ve_1^2\| \phi^{\ve_1}\|_{H^3\cap H^{2,1}}^2\Big) ,\]
where we used the estimate
\[ \| \p_tg\|_{L^2}=\| (\p_t-i\p_x^2)Jw\|_{L^2}\les_{M_1} \ve_2\| e^{\rho^\pm[u^{\ve_2}]}Q_\pm\p_x^2Jw\|_{L^2}+\| Jw\|_{H^1}+\| w\|_{H^1}+\ve_1\| \phi^{\ve_1}\|_{H^3\cap H^{2,1}}.\]
Collecting the estimates above and applying the a priori difference estimates \eqref{apriori-diff1}, \eqref{apriori-diff2}, we see that
\begin{align*}
\frac{d}{dt}\Big( \| e^{\rho^\pm[u^{\ve_2}(t)]}Q_\pm\p_xJ(t)w(t)\|_{L^2}^2- \mathcal{J}_{1,1}(t)\Big) &\leq C(M_1)\Big( \| J(t)w(t)\|_{H^1}^2 + K_1 \Big) ,
\end{align*}
where
\[ \mathcal{J}_{1,1}(t):=2{\rm Re} \int_{\R^3} e^{2it\eta\sigma}m(\xi,\sigma)(2i)^{-1}\wh{f}^{\ve_2}(\xi-\eta)\ol{\wh{g}(\xi-\eta-\sigma)}\wh{f}^{\ve_2}(\xi-\sigma)\ol{\wh{h}(\xi)}\,d\eta\,d\sigma\,d\xi \]
and 
\begin{align*}
K_1:=\| \phi^{\ve_1}-\phi^{\ve_2}\|_{H^2}^2+\ve_1^2\big( 1+\| \phi^{\ve_1}\|_{H^4\cap H^{3,1}}^2+\| \phi^{\ve_1}\|_{H^3\cap H^{2,1}}^4\big) +\| \phi^{\ve_1}\|_{H^3\cap H^{2,1}}^4 \| \phi^{\ve_1}-\phi^{\ve_2}\|_{L^2}^2.
\end{align*}
We also see that
\begin{align*}
|\mathcal{J}_{1,1}(t)|&\leq C(M_0)\|Jw\|_{L^2}\| e^{\rho^\pm[u^{\ve_2}]}Q_\pm\p_xJw\|_{L^2}\leq \frac12\| e^{\rho^\pm[u^{\ve_2}]}Q_\pm\p_xJw\|_{L^2}^2+C(M_0)\| Jw\|_{L^2}^2.
\end{align*} 
Hence,
\[ Z(t):=C(M_1)\Big\{ \| e^{\rho^\pm[u^{\ve_2}(t)]}Q_\pm\p_xJ(t)w(t)\|_{L^2}^2 - \mathcal{J}_{1,1}(t)+C(M_0)\| J(t)w(t)\|_{L^2}^2 \Big\} \]
satisfies 
\[ \| J(t)w(t)\|_{H^1}^2\leq Z(t),\quad Z(0)\le C(M_1)\| \phi^{\ve_1}-\phi^{\ve_2}\|_{H^{1,1}}^2 \]
and, again by \eqref{apriori-diff2}, 
\[ \frac{d}{dt}Z(t)\leq C(M_1)\Big( Z(t) +K_1\Big) .\]
Therefore, Gronwall's inequality implies the desired bound.
\end{proof}


\section{Proof of global bound in $H^2$}
\label{sec:globalh2}

In this section we prove the global a priori bound in $H^2$ when $\beta<0$ in \eqref{kdnls}, which immediately leads us to Theorem \ref{thm:gwp}. 
To this end, we first recall the $H^1$ a priori bound in regular space.

\begin{prop}[Proposition~4.2 in \cite{KT-gauge}] \label{prop:H1bound}
	Let $\alpha ,\beta \in \R$ with $\beta <0$, and suppose that $u:[0,T]\times \R \to \mathbb{C}$ is a smooth solution to \eqref{kdnls} (say, $u\in C([0,T],H^{10})$).
	Then, there exists a constant $C>0$ depending on $\alpha$, $\beta$ and $\| \phi\|_{L^2}$ (not on $u$ itself and $T$) such that for any $t\in [0,T]$ we have
	\begin{gather*}
		\| u(t)\|_{L^2}^2+|\beta|\int_0^t\big\| D_x^{\frac12}(|u(s)|^2)\big\|_{L^2}^2\,ds = \| \phi\|_{L^2}^2,\\
		\| \p_xu(t)\|_{L^2}^2+\int_0^t\big\| D_x^{\frac32}(|u(s)|^2)\big\|_{L^2}^2\,ds \leq C \| \p_x\phi\|_{L^2}^2.
	\end{gather*}
\end{prop}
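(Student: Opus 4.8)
The plan is to prove both assertions by energy estimates that exploit the dissipative sign $\beta<0$; since $u$ is smooth, all the integrations by parts below are justified. First I would prove the $L^2$ identity. Differentiating $\|u(t)\|_{L^2}^2$ and substituting $\p_tu=i\p_x^2u+\alpha\p_x(|u|^2u)+\beta\p_x(\cH(|u|^2)u)$, the dispersive contribution vanishes because $2{\rm Re}\,\big(i\int\p_x^2u\,\ol{u}\,dx\big)=0$. Using ${\rm Re}(u\,\ol{\p_xu})=\tfrac12\p_x(|u|^2)$, the local cubic term becomes $-\alpha\int|u|^2\p_x(|u|^2)\,dx=-\tfrac{\alpha}{4}\int\p_x(|u|^4)\,dx=0$, while the nonlocal term, together with $\cH^*=-\cH$ and $\cH\p_x=D_x$, reduces to $-\beta\int\cH(|u|^2)\p_x(|u|^2)\,dx=\beta\|D_x^{1/2}(|u|^2)\|_{L^2}^2$. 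Hence $\tfrac{d}{dt}\|u\|_{L^2}^2=\beta\|D_x^{1/2}(|u|^2)\|_{L^2}^2$, and integrating in $t$ (with $\beta=-|\beta|$) yields the first line.

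Next I would turn to the $\dot H^1$ bound and differentiate $\|\p_xu(t)\|_{L^2}^2$. The dispersive part again drops out. Expanding $\p_x^2$ of each nonlinearity by the Leibniz rule and repeatedly using ${\rm Re}(u\,\ol{\p_xu})=\tfrac12\p_x(|u|^2)$ and ${\rm Re}(\p_x^2u\,\ol{\p_xu})=\tfrac12\p_x|\p_xu|^2$, every contribution collapses into two kinds of terms: the leading nonlocal interaction, which---exactly as in the $L^2$ computation---produces the good dissipative term $\beta\|D_x^{3/2}(|u|^2)\|_{L^2}^2$, and a cubic remainder of the schematic form $\int\big(\alpha\p_x(|u|^2)+\beta D_x(|u|^2)\big)|\p_xu|^2\,dx$. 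This gives
\[ \frac{d}{dt}\|\p_xu\|_{L^2}^2=\beta\big\|D_x^{3/2}(|u|^2)\big\|_{L^2}^2+3\int\big(\alpha\,\p_x(|u|^2)+\beta\,D_x(|u|^2)\big)|\p_xu|^2\,dx. \]

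The hard part is controlling this cubic remainder so that it can be absorbed by the dissipation and then closed by Gronwall. Estimating it by $\|\p_x(|u|^2)\|_{L^\infty}\|\p_xu\|_{L^2}^2$ is borderline: the bad factor carries a full derivative of $|u|^2$, and since $\dot H^{1/2}\not\hookrightarrow L^\infty$ one cannot simply interpolate between the two smoothing norms $\|D_x^{1/2}(|u|^2)\|_{L^2}$ and $\|D_x^{3/2}(|u|^2)\|_{L^2}$ that the dissipation integral supplies. I expect this endpoint to be the main obstacle, and would resolve it either through the logarithmic Brezis-Gallouet-Wainger estimate (Lemma~\ref{lem:BGW}) together with a Gronwall argument that tolerates the logarithm, or---more robustly---by first applying a gauge transformation that removes the resonant $\beta D_x(|u|^2)$ part of the remainder, after which the surviving terms are strictly subcritical and admit a bound of the form $\ve\|D_x^{3/2}(|u|^2)\|_{L^2}^2+C(\|u\|_{L^2})\|\p_xu\|_{L^2}^2$.

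Finally, choosing $\ve$ small absorbs the $\|D_x^{3/2}(|u|^2)\|_{L^2}^2$ piece into the negative dissipative term, leaving a differential inequality $\tfrac{d}{dt}\|\p_xu\|_{L^2}^2+c\|D_x^{3/2}(|u|^2)\|_{L^2}^2\le C(\|u\|_{L^2})\|\p_xu\|_{L^2}^2$. Inserting the uniform bound $\|u(t)\|_{L^2}\le\|\phi\|_{L^2}$ from the first part fixes the constant independently of $T$, so Gronwall gives $\|\p_xu(t)\|_{L^2}^2\le C\|\p_x\phi\|_{L^2}^2$, and integrating the inequality in time recovers the stated space-time bound on $\int_0^t\|D_x^{3/2}(|u(s)|^2)\|_{L^2}^2\,ds$.
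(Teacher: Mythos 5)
First, a point of reference: the paper does not prove this proposition at all --- it is quoted verbatim from Proposition~4.2 of \cite{KT-gauge} --- so there is no in-paper argument to compare against; the closest analogue is the $H^2$ computation in Section~\ref{sec:globalh2}. Your derivation of the $L^2$ identity is correct and complete (a harmless constant slip aside), and your energy identity
\[
\frac{d}{dt}\|\p_xu\|_{L^2}^2=\beta\big\|D_x^{3/2}(|u|^2)\big\|_{L^2}^2+3\int_\R\halbe(\p_x|u|^2)\,|\p_xu|^2\,dx
\]
is the correct $k=1$ analogue of \eqref{eq:uxx} and the right starting point.

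The gap is in the closing step. The proposition asserts $\|\p_xu(t)\|_{L^2}^2\le C\|\p_x\phi\|_{L^2}^2$ with $C$ depending only on $\alpha,\beta,\|\phi\|_{L^2}$ and \emph{not on $T$}. Both of your proposed routes terminate in a differential inequality of the form $\frac{d}{dt}\|\p_xu\|_{L^2}^2+c\|D_x^{3/2}(|u|^2)\|_{L^2}^2\le C(\|u\|_{L^2})\|\p_xu\|_{L^2}^2$ with a coefficient that is merely constant in time; Gronwall then gives only $e^{Ct}\|\p_x\phi\|_{L^2}^2$, which is not the stated bound. (Your Brezis--Gallouet--Wainger option is exactly what the paper does at the $H^2$ level, and there it produces only the growing bound $e^{C_1t^{4/3}}$ of Proposition~\ref{prop:H2bound}; it cannot yield a $T$-uniform constant.) To obtain $T$-independence one must control the cubic remainder by
\[
\ve\big\|D_x^{3/2}(|u|^2)\big\|_{L^2}^2+C_\ve\,\kappa(t)\,\|\p_xu\|_{L^2}^2
\]
with $\kappa$ integrable in time uniformly in $T$ --- the natural candidate being $\kappa=\|D_x^{1/2}(|u|^2)\|_{L^2}^2$, whose integral is bounded by $|\beta|^{-1}\|\phi\|_{L^2}^2$ thanks to the $L^2$ identity. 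Producing such a bound is precisely where the difficulty sits: naive interpolations of $\int\halbe(\p_x|u|^2)|\p_xu|^2\,dx$ always seem to cost either a factor of $\|\p_x^2u\|_{L^2}$ or the full power of $\|D_x^{3/2}(|u|^2)\|_{L^2}^2$, and your sketch does not supply the missing estimate. The appeal to a gauge transformation ``removing the resonant $\beta D_x(|u|^2)$ part'' does not resolve this either, since the surviving $\alpha\,\p_x(|u|^2)$ contribution presents the identical derivative-counting obstruction. As written, the argument proves a weaker, time-growing version of the second inequality, not the proposition as stated.
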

We shall prove the following $H^2$ bound:
\begin{prop}\label{prop:H2bound}
	Under the hypotheses of Proposition~\ref{prop:H1bound}, we have 
	\[ \| \p_x^2u(t)\|_{L^2}^2+\int_0^t\big\| D_x^{\frac52}(|u(s)|^2)\big\|_{L^2}^2\,ds \leq C_2 e^{C_1t^{4/3}}\]
	for any $t\in [0,T]$, where $C_k>0$ ($k=1,2$) are constants depending on $\alpha$, $\beta$ and $\| \phi\|_{H^k}$ (not on $u$ itself and $T$).
\end{prop}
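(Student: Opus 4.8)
The plan is to run an $H^2$ energy estimate directly on \eqref{kdnls} (i.e.\ \eqref{kdnls-ve} with $\ve=0$), to extract the second-order dissipative smoothing exactly as the $L^2$ and $H^1$ identities of Proposition~\ref{prop:H1bound} do, and to control everything else using the \emph{uniform-in-time} $H^1$ bound that Proposition~\ref{prop:H1bound} already provides. First I would compute
\[ \frac{d}{dt}\|\p_x^2u\|_{L^2}^2=2\,{\rm Re}\int_\R \p_x^3\mathcal{N}[u]\,\ol{\p_x^2u}\,dx, \]
the linear Schr\"odinger contribution $2\,{\rm Re}\int i\p_x^4u\,\ol{\p_x^2u}\,dx$ vanishing after integration by parts. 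Using \eqref{lem:int1} with $k=2$, I split $\p_x^3\mathcal{N}[u]$ into the three leading cubic terms
\[ \big(2\alpha|u|^2+\beta\cH(|u|^2)\big)\p_x^3u,\qquad \beta\cH(\p_x^3u\,\ol u)u,\qquad \halbe(u\p_x^3\ol u)u, \]
plus a remainder of $L^2$ norm $\les\|u\|_{H^1}\|u\|_{H^2}^2$. Since $\|u(t)\|_{H^1}\les1$ uniformly in $t$ by Proposition~\ref{prop:H1bound}, this remainder contributes only $\les\|\p_x^2u\|_{L^2}^2$, which is harmless for a Gronwall argument.

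The decisive step is to recover the dissipation at second order. Just as $2\,{\rm Re}\int\beta\p_x[\cH(|u|^2)u]\ol u\,dx=\beta\|D_x^{1/2}(|u|^2)\|_{L^2}^2$ yields the $L^2$ identity and the analogous computation yields the $D_x^{3/2}$-smoothing of Proposition~\ref{prop:H1bound}, the two resonant terms $\beta\cH(\p_x^3u\,\ol u)u$ and $\halbe(u\p_x^3\ol u)u$ combine, via $\cH\p_x=D_x$ and the skew-adjointness of $\cH$, to produce $c\beta\|D_x^{5/2}(|u|^2)\|_{L^2}^2$ with some $c>0$. As $\beta<0$ this is a good (negative) term, which I move to the left-hand side; it is precisely the $\int_0^t\|D_x^{5/2}(|u(s)|^2)\|_{L^2}^2\,ds$ in the statement. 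The first term $(2\alpha|u|^2+\beta\cH(|u|^2))\p_x^3u$ is integrated by parts to $-\int\p_x\big(2\alpha|u|^2+\beta\cH(|u|^2)\big)|\p_x^2u|^2\,dx$, and together with the non-resonant (high$\times$low) parts of the two resonant terms it is dominated by a coefficient of the form $\big(\|\p_x(|u|^2)\|_{L^\infty}+\|D_x(|u|^2)\|_{L^\infty}\big)\|\p_x^2u\|_{L^2}^2$.

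The remaining task, and the main obstacle, is that this $L^\infty$ coefficient sits exactly at the borderline of what the available smoothing controls, and that the dissipation acts on $|u|^2$, not on $u$ itself. Interpolating $\|\p_x(|u|^2)\|_{L^\infty}$ only between the $H^1$-bounded quantity $\|D_x(|u|^2)\|_{L^2}$ and the top smoothing $\|D_x^{5/2}(|u|^2)\|_{L^2}$ gives a bound $\les\|D_x^{5/2}(|u|^2)\|_{L^2}^{1/3}$, which after Young's inequality leaves a \emph{supercritical} power $\|\p_x^2u\|_{L^2}^{12/5}$ and would force finite-time blow-up of the estimate; so one must exploit the intermediate smoothing $\|D_x^{3/2}(|u|^2)\|_{L^2}$, whose time-integral is bounded by Proposition~\ref{prop:H1bound}. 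I would therefore estimate the borderline term through the Brezis--Gallouet--Wainger inequality (Lemma~\ref{lem:BGW}) applied to $\p_x(|u|^2)$, noting $\|\p_x(|u|^2)\|_{H^{1/2}}\sim\|D_x^{3/2}(|u|^2)\|_{L^2}$ up to an $H^1$-bounded piece and $\|\p_x^2(|u|^2)\|_{L^2}\les1+\|\p_x^2u\|_{L^2}$, to obtain
\[ \|\p_x(|u|^2)\|_{L^\infty}\les\big(\|D_x^{3/2}(|u|^2)\|_{L^2}+1\big)\log^{\frac12}\!\big(e+\|\p_x^2u\|_{L^2}\big)+1. \]
This replaces the supercritical power by a logarithmic factor and produces an Osgood-type differential inequality for $\|\p_x^2u\|_{L^2}^2$. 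Setting $G:=\log\big(e+\|\p_x^2u\|_{L^2}^2\big)$ linearizes it in $G^{1/2}$; integrating and applying H\"older in time to $\int_0^t\|D_x^{3/2}(|u|^2)\|_{L^2}\,ds$ against the bound $\int_0^t\|D_x^{3/2}(|u|^2)\|_{L^2}^2\,ds\les1$ of Proposition~\ref{prop:H1bound} controls $G(t)$ by a power of $t$, which exponentiates to the claimed $C_2e^{C_1t^{4/3}}$; the bulk of the work is the bookkeeping that confirms the $D_x^{5/2}$-smoothing absorbs every term carrying a third derivative of $u$ so that no supercritical growth survives, and that the H\"older balancing indeed pins the exponent at $4/3$.
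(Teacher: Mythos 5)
Your overall architecture is the same as the paper's: a direct $H^2$ energy identity for \eqref{kdnls}, extraction of the dissipative term $\beta\|D_x^{5/2}(|u|^2)\|_{L^2}^2$ (your identification of its source is correct), a Brezis--Gallouet--Wainger bound for the borderline $L^\infty$ coefficient, an Osgood/log-Gronwall step, and H\"older in time against the smoothing of Proposition~\ref{prop:H1bound}. However, there is a genuine gap in your treatment of what you call the remainder. Lemma~\ref{lem:int} with $k=2$ gives $\|R\|_{L^2}\les\|u\|_{H^1}\|u\|_{H^2}^2$, so pairing with $\ol{\p_x^2u}$ yields $\les\|u\|_{H^1}\|u\|_{H^2}^3$, not $\les\|\p_x^2u\|_{L^2}^2$; this is \emph{cubic} in $\|u\|_{H^2}$ and a Gronwall argument based on it blows up in finite time. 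The dangerous contributions are those with derivative distribution $(2,1,0)$ over the three factors (equivalently, after the integrations by parts, terms of the schematic form $\cH_{\alpha,\beta}(\p_x^2|u|^2)\,\p_x(|\p_xu|^2)$, which also arise as the cross terms when your two ``resonant'' pieces are combined via $2\,{\rm Re}(u\ol{\p_x^2u})=\p_x^2|u|^2-2|\p_xu|^2$). Since $\|\p_xu\|_{L^\infty}\les\|u\|_{H^1}^{1/2}\|\p_x^2u\|_{L^2}^{1/2}$ is the best available, these terms scale like $\|\p_x^2u\|_{L^2}^{5/2}$ and are supercritical; your safety-net remark about absorbing ``every term carrying a third derivative of $u$'' does not cover them, as they carry only second derivatives. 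The paper handles them by keeping the $\p_x^2(|u|^2)$ structure, interpolating $\|\p_x^2(|u|^2)\|_{L^2}\le\|D_x^{5/2}(|u|^2)\|_{L^2}^{1/2}\|D_x^{3/2}(|u|^2)\|_{L^2}^{1/2}$, and absorbing the top-order half-power into the dissipation by Young, which leaves the critical coefficient $\|D_x^{3/2}(|u|^2)\|_{L^2}^{2/3}\|\p_xu\|_{L^2}^{2/3}\|\p_x^2u\|_{L^2}^2$ with a time-integrable prefactor. Your decomposition via Lemma~\ref{lem:int} discards exactly this structure, so the step must be redone.

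A secondary, fixable issue: your BGW coefficient $\big(\|D_x^{3/2}(|u|^2)\|_{L^2}+1\big)$ contains an additive constant whose time integral is $t$, so your scheme as written yields $e^{Ct^2}$, not $e^{Ct^{4/3}}$. To reach $4/3$ you need that the full inhomogeneous norm $\|\p_x(|u|^2)\|_{H^{1/2}}$ is square-integrable in time uniformly (which follows from Proposition~\ref{prop:H1bound} by interpolating $\|D_x(|u|^2)\|_{L^2}^2\le\|D_x^{1/2}(|u|^2)\|_{L^2}\|D_x^{3/2}(|u|^2)\|_{L^2}$), so that $\int_0^t\|\p_x(|u|^2)\|_{H^{1/2}}\,ds\les t^{1/2}$ and $\int_0^t\|\p_x(|u|^2)\|_{H^{1/2}}^{2/3}\,ds\les t^{2/3}$; squaring then gives the exponent $t^{4/3}$.
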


By Propositions~\ref{prop:H1bound}, \ref{prop:H2bound} and the local well-posedness result in $H^2\cap H^{1,1}$ given by Theorem~\ref{thm:lwp}, we immediately prove the existence of global solutions $u\in C([0,\infty),H^2\cap H^{1,1})$ to \eqref{kdnls} for any initial data $\phi\in H^2\cap H^{1,1}$ in the dissipative case $\beta<0$, which implies Theorem~\ref{thm:gwp}.

\begin{rem}	
	The growth rate $e^{Ct^{4/3}}$ is certainly not optimal.  	We expect that the $H^2$ norm stays bounded.
\end{rem}


\begin{proof}[Proof of Proposition~\ref{prop:H2bound}]
If $u$ is a smooth solution to \eqref{kdnls}, then a direct calculation shows
\begin{align*}
\p_t(|\p_x^2 u|^2) &=2 {\rm Re} \Big[ \p_x^3\Big( i\p_x u+\cH_{\alpha,\beta}(|u|^2)u\Big) \ol{\p_x^2 u} \Big] \\
	&=\p_x\Big[ 2{\rm Re} \big( i\ol{\p_x^2 u}\p_x^3u\big) +\frac\alpha 2|\p_x^2u|^2-2\cH_{\alpha,\beta}(\p_x^2|u|^2)|\p_xu|^2+\cH_{\alpha,\beta}(|u|^2)|\p_x^2u|^2 \Big] \\
	&\qquad +\beta D_x(\p_x^2|u|^2)(\p_x^2|u|^2)+5\cH_{\alpha,\beta}(\p_x^2|u|^2)(\p_x|\p_xu|^2)+5\cH_{\alpha,\beta}(\p_x|u|^2)|\p_x^2u|^2,
\end{align*}
and hence,
\begin{align}\label{eq:uxx}
\begin{aligned}
&\frac{d}{dt}\int_\R |\p_x^2 u(t)|^2\,dx \\
&\quad =\beta \big\| D_x^{\frac52}(|u(t)|^2)\big\|_{L^2}^2+5\int_\R\cH_{\alpha,\beta}(\p_x^2|u|^2)\p_x (|\p_x u|^2)\,dx +5\int_\R \cH_{\alpha,\beta}(\p_x |u|^2)|\p_x^2 u|^2\,dx.
\end{aligned}
\end{align}
The second term of the RHS of \eqref{eq:uxx} is estimated by
\begin{align*}
&C\big\| \p_x^2 (|u(t)|^2)\big\|_{L^2}\| \p_x u(t)\|_{L^\infty}\| \p_x^2 u(t)\|_{L^2}\\
&\quad \leq C \big\| D_x^{\frac52}(|u(t)|^2)\big\|_{L^2}^{\frac12}\big\| D_x^{\frac32}(|u(t)|^2)\big\|_{L^2}^{\frac12}\| \p_x u(t)\|_{L^2}^{\frac12}\| \p_x^2 u(t)\|_{L^2}^{\frac32} \\
&\quad \leq \frac{|\beta|}{2}\big\| D_x^{\frac52}(|u(t)|^2)\big\|_{L^2}^2+C\big\| D_x^{\frac32}(|u(t)|^2)\big\|_{L^2}^{\frac23}\|\p_x u(t)\|_{L^2}^{\frac23}\|\p_x^2  u(t)\|_{L^2}^2,
\end{align*}
where the constant $C>0$ depends on $\alpha$ and $\beta$.
Using Lemma \ref{lem:BGW} with $f:=\cH_{\alpha,\beta}(\p_x|u|^2)$, the third term of the right-hand side of \eqref{eq:uxx} is estimated by
\begin{align*}
&\int_\R \cH_{\alpha,\beta}(\p_x |u|^2)|\p_x^2 u|^2\,dx \\
&\quad \leq C\|\cH_{\alpha,\beta}(\p_x |u|^2)\|_{L^\infty} \|\p_x^2 u\|_{L^2}^2\\
&\quad \leq C\Big( \big\| \p_x (|u(t)|^2)\big\|_{H^{\frac12}}\log^{\frac12}\big( e+\big\| \p_x^2 (|u(t)|^2)\big\|_{L^2}\big) +\big\| \p_x  (|u(t)|^2)\big\|_{H^{\frac12}}^{\frac23}\Big) \| \p_x^2 u(t)\|_{L^2}^2\\
&\quad \leq C(\| \phi\|_{L^2})\Big( \big\| \p_x (|u(t)|^2)\big\|_{H^{\frac12}}+\big\| \p_x (|u(t)|^2)\big\|_{H^{\frac12}}^{\frac23}\Big) \| \p_x^2 u(t)\|_{L^2}^2\log^{\frac12}\big( e+\| \p_x^2u(t)\|_{L^2}^2\big) ,
\end{align*}
where we have used $\| u(t)\|_{L^2}\leq \| \phi \|_{L^2}$ by Proposition~\ref{prop:H1bound}.
	Hence, we obtain
	\begin{align*}
		&\frac{d}{dt}\| \p_x^2 u(t)\|_{L^2}^2+\frac{|\beta|}{2}\big\| D_x^{\frac52}(|u(t)|^2)\big\|_{L^2}^2\\
		&\quad \leq C_1'\Big( \big\|\p_x (|u(t)|^2)\big\|_{H^{\frac12}}+\big\| \p_x (|u(t)|^2)\big\|_{H^{\frac12}}^{\frac23}\Big) \| \p_x^2 u(t)\|_{L^2}^2\log^{\frac12}\big( e+\| \p_x^2 u(t)\|_{L^2}^2\big) 
	\end{align*}
	for $t\in [0,T]$, where we have used Proposition~\ref{prop:H1bound} again and $C_1'>0$ depends on $\alpha$, $\beta$, $\| \phi\|_{H^1}$.
	Noticing the fact that
	\[ f(t)\geq 1,\quad f'(t)\leq g(t)f(t)\log^{\frac12}f(t)\quad \Longrightarrow \quad f(t)\leq \exp \Big[ \Big( \log^{\frac12}f(0)+\frac12\int_0^tg(s)\,ds\Big)^2\Big] ,\]
	and the local integrability
	\[ \int_0^t\Big( \big\| \p_x(|u(s)|^2)\big\|_{H^{\frac12}}+\big\| \p_x (|u(s)|^2)\big\|_{H^{\frac12}}^{\frac23}\Big) \,ds\les t^{\frac12}+t^{\frac23} \]
	by Proposition~\ref{prop:H1bound} and H\"older, we obtain the desired bound.
\end{proof}


\appendix

\section{Proof of local well-posedness}
\label{sec:proof-lwp}

In this section, we give a proof of Theorem~\ref{thm:lwp}. 
To this end, we introduce the following lemma.

\begin{lemma}\label{lem:lwp-ve}
Let $X^k$ be either $H^k$ or $H^k\cap H^{k-1,1}$ for any integer $k\geq 2$. 
For any $\ve\in (0,1)$, the regularized Cauchy problem \eqref{kdnls-ve} is locally well-posed in $X^k$.
The local existence time may be chosen as $T\sim \ve(1+\| \phi\|_{X^k})^{-4}$.
\end{lemma}

\begin{proof}
We give a proof only for the case $X^k=H^k\cap H^{k-1,1}$.
Let $\{ U_\ve(t):=e^{t(i\p_x^2+\ve \p_x^2)}\}_{t\geq 0}$ be the semigroup of the propagator for the linear equation $(\p_t-i\p_x^2-\ve \p_x^2)u=0$.
Define 
\[ \| u\|_{X_T}:=\sup _{t\in [0,T]}\Big( \| u(t)\|_{H^k}+\| J(t)u(t)\|_{H^{k-1}}\Big) ,\]
then $\| u\|_{X_T}\sim \sup\limits_{t\in [0,T]}\| u(t)\|_{H^k\cap H^{k-1,1}}$.
It suffices to prove that the map
\[ \Phi^\ve_\phi: u\mapsto U_\ve(t)\phi +\int_0^tU_\ve(t-s)\p_x\big[ \halbe(|u|^2)u\big] (s)\,ds \]
is a contraction on a complete metric space
\[ \Big\{ u\in C([0,T],H^k\cap H^{k-1,1})\,:\, \| u\|_{X_T}\leq C\big( \| \phi\|_{H^k}+\| x\phi\|_{H^{k-1}}\big) \Big\}\]
with the metric induced by $\| \cdot \|_{X_T}$.
Recalling $[J(t),\p_t-i\p_x^2]=0$ and $[J(t),\p_x^2]=-2\p_x$, we see that
\begin{align*}
&J(t)\Big[U_\ve (t)\phi +\int_0^tU_\ve (t-s)F(s)\,ds\Big] \\
&\quad =U_\ve (t)(x\phi) +\int_0^tU_\ve(t-s)\Big[ J(s)F(s)-2\ve \p_x\Big( U_\ve (s)\phi+\int_0^sU_\ve (s-r)F(r)\,dr\Big) \Big]\,ds\\
&\quad =U_\ve (t)\big[ x\phi-2\ve t\p_x\phi\big] +\int_0^tU_\ve(t-s)\big[ J(s)F(s)-2\ve (t-s)\p_xF(s)\big] \,ds.
\end{align*}
Using this and the Leibniz rule for $J(t)$, together with the parabolic smoothing estimate
\[ \Big\| \int_0^tU_\ve(t-s)\p_xF(s)\,ds\Big\|_{L^\infty_TH^k}\les \ve^{-\frac12}T^{\frac12}\| F\|_{L^\infty_TH^k}\]
and that $H^{k-1}$ is an algebra, we can easily show that $\Phi^\ve_\phi$ is indeed a contraction.
Note that the continuity of the map $t\mapsto \Phi^\ve_\phi[u](t)\in H^k\cap H^{k-1,1}$ is also verified by a standard argument. 
\end{proof}

\begin{proof}[Proof of Theorem~\ref{thm:lwp}]
(i) Let $\phi\in H^2$ be any initial data, and define $\phi^{\ve}:=P_{\leq \ve^{-\lambda}}\phi$ for $\ve\in (0,1)$, with any $\lambda\in (0,\frac12)$.
We have $\phi^\ve\in H^k$ for any $k\geq 2$ and 
\begin{alignat*}{2}
\| \phi^\ve\|_{H^k}&\les \ve^{-(k-2)\lambda}\| \phi^\ve\|_{H^2} &\qquad &(k>2),\\
\| \phi^{\ve_1}-\phi^{\ve_2}\|_{L^2}&\les \ve_1^{2\lambda}\| \phi^{\ve_1}-\phi^{\ve_2}\|_{H^2}&\qquad &(0<\ve_2<\ve_1<1).
\end{alignat*}
From these inequalities, it holds that
\begin{gather}\label{cond:lambda0}
\ve_1^2\big( 1+\| \phi^{\ve_1}\|_{H^4}^2+\| \phi^{\ve_1}\|_{H^3}^4\big) +\| \phi^{\ve_1}\|_{H^3}^4 \| \phi^{\ve_1}-\phi^{\ve_2}\|_{L^2}^2 \les _{\| \phi\|_{H^2}}\ve_1^{2\gamma}+\| \phi^{\ve_1}-\phi^{\ve_2}\|_{H^2}^2,
\end{gather}
where $\gamma :=\frac12 (2-4\lambda )>0$.
	
Let $u^\ve$ be a unique smooth solution of the regularized equation \eqref{kdnls-ve} with the parameter $\ve$ and the initial data $\phi^\ve$.
From Lemma~\ref{lem:lwp-ve} and the a priori estimate given in Proposition~\ref{prop:H2apriori}, the solution $u^\ve$ exists on a uniform-in-$\ve$ interval $[0,T_0]$ with $T_0=T_0(\| \phi\|_{H^2})>0$.
Then, by the difference estimate in Proposition~\ref{prop:H2diff} and \eqref{cond:lambda0}, we have
\begin{align*}
&\| u^{\ve_1}(t)-u^{\ve_2}(t)\|_{H^2}\leq C(\| \phi\|_{H^2})\Big( \| \phi^{\ve_1}-\phi^{\ve_2}\|_{H^2}+\ve_1^{\gamma}\Big)
\end{align*}
for any $0<\ve_2<\ve_1<1$.
Therefore, $\{ u^\ve\}_{\ve\in(0,1)}\subset C([0,T_0],H^2)$ is Cauchy as $\ve\to 0+$ and has a limit $u\in C([0,T_0],H^2)$ such that
\begin{gather}\label{est:cd0}
\| u^{\ve}-u\|_{L^\infty_{T_0}H^2}\leq C(\| \phi\|_{H^2})\Big( \| \phi^{\ve}-\phi\|_{H^2}+\ve^\gamma\Big) .
\end{gather}
Moreover, the limit $u$ is a solution to  \eqref{kdnls}.
From the $L^2$ Lipschitz estimate in Corollary~\ref{cor:L2diff}, the solution $u$ is unique in $C([0,T_0],H^2)$.
	
To show the continuous dependence on initial data, assume that a sequence $\{ \phi_j\}_j\subset H^2$ converges to $\phi$ in $H^2$.
We approximate $\phi_j$ in the same way, by $\phi_j^\ve:=P_{\leq \ve^{-\lambda}}\phi_j$, and let the corresponding solutions be denoted by $u_j,u^\ve_j$. 
Note that these solutions exist on the common interval $[0,T_0]$ corresponding to the data size $2\| \phi\|_{H^2}$ (for sufficiently large $j$).
Applying Proposition~\ref{prop:H2diff} (with $\ve_1=\ve_2=\ve$) and \eqref{cond:lambda0}, we see that
\[ \| u^{\ve}_j-u^\ve\|_{L^\infty_{T_0}H^2}
\leq C(\| \phi\|_{H^2})\Big\{ \| P_{\leq \ve^{-\lambda}}(\phi_j-\phi)\|_{H^2}+\ve^\gamma +\| P_{\leq \ve^{-\lambda}}(\phi_j-\phi)\|_{L^2}\ve^{-2\lambda}\Big\} . \]
Combining it with \eqref{est:cd0}, we have
\begin{align*}
\limsup_{j\to \infty} \| u_j-u\|_{L^\infty_{T_0}H^2}
&\les \limsup_{j\to \infty}\Big( \| P_{>\ve^{-\lambda}}\phi^j\|_{H^2}+\| P_{>\ve^{-\lambda}}\phi \|_{H^2}+\ve^\gamma+\| \phi_j-\phi\|_{H^2} +\| \phi_j-\phi\|_{L^2}\ve^{-2\lambda}\Big) \\
&\leq \sup_{j}\| P_{>\ve^{-\lambda}}\phi^j\|_{H^2}+\| P_{>\ve^{-\lambda}}\phi \|_{H^2}+\ve^\gamma .
\end{align*}
Since the last line tends to $0$ as $\ve\to 0$, this proves the convergence $u_j\to u$ in $C([0,T_0],H^2)$.


(ii) We first notice, for any $\phi\in H^2\cap H^{1,1}$ and $\phi^{\ve}:=P_{\leq \ve^{-\lambda}}\phi$, that $\| \phi^{\ve}\|_{H^2\cap H^{1,1}}\les \| \phi\|_{H^2\cap H^{1,1}}$ uniformly in $\ve\in(0,1)$ and $\phi^{\ve}\to \phi$ in $H^2\cap H^{1,1}$ as $\ve\to +0$.
This can be seen by $\mathcal{F}\big( [x,P_{\leq N}]\phi\big) (\xi)=i\varrho'_{\leq N}(\xi)\wh{\phi}(\xi)$ and $\| \varrho'_{\leq N}\|_{L^\infty}\les N^{-1}$.
Moreover, from the fact that $\varrho'_{\leq N}(\xi)$ is supported in $\{ |\xi|\sim N\}$, we have $\phi^\ve\in H^k\cap H^{k-1,1}$ for any $k\geq 2$ and 
\[ \| x\phi^\ve\|_{H^{k-1}}\les \ve^{-(k-2)\lambda}\big( \|x\phi^\ve\|_{H^1}+\ve^{\lambda}\| \phi^\ve\|_{H^1}\big) \qquad(k>2). \]
Using this, we see that
\begin{gather}\label{cond:lambda1}
\begin{aligned}
&\ve_1^2\big( 1+\| \phi^{\ve_1}\|_{H^4\cap H^{3,1}}^2+\| \phi^{\ve_1}\|_{H^3\cap H^{2,1}}^4\big) +\| \phi^{\ve_1}\|_{H^3\cap H^{2,1}}^4 \| \phi^{\ve_1}-\phi^{\ve_2}\|_{L^2}^2 \\
&\quad \les _{\| \phi\|_{H^2\cap H^{1,1}}}\ve_1^{2\gamma}+\| \phi^{\ve_1}-\phi^{\ve_2}\|_{H^2}^2.
\end{aligned}
\end{gather}
From Lemma~\ref{lem:lwp-ve} and the a priori estimate in Proposition~\ref{prop:H11apriori}, the solution $u^\ve$ to \eqref{kdnls-ve} belongs to $C([0,T_0],H^2\cap H^{1,1})$.
Then, the difference estimate in Proposition~\ref{prop:H11diff} and \eqref{cond:lambda1} imply that
\begin{align*}
&\| J(t)[u^{\ve_1}(t)-u^{\ve_2}(t)]\|_{H^1}\leq C(\| \phi\|_{H^2\cap H^{1,1}})\Big( \| \phi^{\ve_1}-\phi^{\ve_2}\|_{H^2\cap H^{1,1}}+\ve_1^{\gamma}\Big)
\end{align*}
for any $0<\ve_2<\ve_1<1$.
Hence, $\{ u^\ve\}_{\ve\in(0,1)}$ is also Cauchy in $C([0,T_0],H^2\cap H^{1,1})$, and the limit $u$ belongs to this space, satisfying
\begin{gather}\label{est:cd1}
\| J[u^{\ve}-u]\|_{L^\infty_{T_0}H^1}\leq C(\| \phi\|_{H^2\cap H^{1,1}})\Big( \| \phi^{\ve}-\phi\|_{H^2\cap H^{1,1}}+\ve^\gamma\Big) .
\end{gather}
To show the continuity of the solution map in $H^2\cap H^{1,1}$, it suffices to repeat the argument for the $H^2$ case in (i) with the additional estimates \eqref{est:cd1} and
\[ \| J[u^{\ve}_j-u^\ve]\|_{L^\infty_{T_0}H^1} \leq C(\| \phi\|_{H^2\cap H^{1,1}})\Big\{ \| P_{\leq \ve^{-\lambda}}(\phi_j-\phi)\|_{H^2\cap H^{1,1}}+\ve^\gamma +\| P_{\leq \ve^{-\lambda}}(\phi_j-\phi)\|_{L^2}\ve^{-2\lambda}\Big\} ,\]
which follows from Propositions~\ref{prop:H11diff} (with $\ve_1=\ve_2=\ve$) and \eqref{cond:lambda1}.

(iii) Suppose that $\phi\in X^k$ for some $k\geq 3$.
This means the approximating initial data $\{ \phi^\ve\}_{\ve\in(0,1)}$ are bounded in $X^k$.
Then, a priori bounds in Propositions~\ref{prop:Hkapriori} and \ref{prop:Hk1apriori} show that the solutions $\{ u^\ve\}_{\ve\in (0,1)}$ are bounded in $C([0,T_0],X^k)$.
Hence, for each $t\in [0,T_0]$, $u^{\ve}(t)$ converges weakly in $X^k$ along some sequence $\ve=\ve_j\to 0+$.
Since $u^\ve(t)$ converges to $u(t)$ in $X^2$, this implies $u(t)\in X^k$ and 
\[ \| u(t)\|_{X^k}\leq \sup _{\ve\in (0,1)}\| u^\ve\|_{L^\infty_{T_0}X^k}, \]
namely, $u\in L^\infty ([0,T_0],X^k)$.
By interpolation against $C([0,T_0],X^2)$, we obtain that $u\in C([0,T_0],X^{k-1})$.
\end{proof}


\section*{Acknowledgements}
The authors would like to appreciate Professor Yoshio Tsutsumi for having continued to provide deep insight into the study of KDNLS and other nonlinear dispersive equations.
They also thank the anonymous referees for their helpful suggestions.
N. Kishimoto was supported in part by KAKENHI Grant-in-Aid for Scientific Research (C) JP20K03678 funded by the Japan Society for the Promotion of Science(JSPS).
K. Lee was supported
in part by RS-2025-00514043 and  RS-2024-00463260,
the National Research Foundation of Korea(NRF) grant funded by the Korea government (MSIT) and (MOE), respectively.

\end{document}